\numberwithin{equation}{subsection}
\newtheorem{defn}{Definition}[section]
\newtheorem{corollary}[defn]{Corollary}
\newtheorem{rem}[defn]{Remark}
\newtheorem{exm}[defn]{Example}
\newtheorem{lemma}[defn]{Lemma}
\newtheorem{theorem}[defn]{Theorem}
\newtheorem{xproof}{{\it Proof. }}
\newenvironment{definition}{\begin{defn}\em}{\end{defn}}
\newenvironment{remark}{\begin{rem}\em}{\end{rem}}
\newenvironment{example}{\begin{exm}\em}{\end{exm}}
\newenvironment{proof}{\begin{xproof}\em}{\end{xproof}}
\def\qed{\hspace{0.3cm}{\rule{1ex}{2ex}}}
\newcommand\V{\bigvee}
\newcommand\ie{i.e.}
\newcommand\eg{e.g.}
\newcommand\Rel{\operatorname{Rel}}
\newcommand\cf{\textrm{cf.}}
\newcommand\opens{\operatorname{\mathcal{O}}}
\newcommand\spp{\operatorname{\varsigma}}
\newcommand\downsegment{\operatorname{\downarrow}}
\newcommand\Frm{\textit{Frm}}
\newcommand\Loc{\textit{Loc}}
\newcommand\Top{\textit{Top}}
\newcommand\rs{\operatorname{R}}
\newcommand\ls{\operatorname{L}}
\newcommand\ts{\operatorname{T}}
\newcommand\ident{\mathrm{id}}
\newcommand\ipi{\operatorname{\mathcal I}}
\newcommand\SL{\mathit{SL}}
\newcommand\ssq{\mathit{StabQu}}
\newcommand\iqf{\mathit{InvQuF}}
\newcommand\quantales{\mathit{Qu}}
\newcommand\sppQu{\quantales^{\spp}}
\newcommand\StabQu{\mathit{StabQu}}
\newcommand\eqsppQu{\quantales^{\textrm{eq}}}
\newcommand\ueqsppQu{\quantales_e^{\textrm{eq}}}
\newcommand\steqsppQu{\quantales_1^{\textrm{eq}}}
\newcommand\usppQu{\quantales_e^{\spp}}
\newcommand\stsppQu{\quantales_1^{\spp}}
\newcommand\sppuQu{\quantales_{(e)}^{\spp}}
\newcommand\basedQu{\quantales^{\flat}}
\newcommand\stbasedQu{\quantales_1^{\flat}}
\newcommand\uquantales{\quantales_e}
\newcommand\stquantales{\quantales_1}
\newcommand\intg[1]{\widetilde{#1}}
\newcommand\lres[2]{{_{#1}{\vert}{#2}}}
\newcommand\rres[2]{{{#1}{\vert}_{#2}}}
\begin{document}

\title{Effective equivalence relations and principal quantales}

\author{Juan Pablo Quijano and Pedro Resende\thanks{Work funded by FCT/Portugal through the LisMath program and projects EXCL/MAT-GEO/0222/2012 and PEst-OE/EEI/LA0009/2013.}}

\date{~}

\maketitle

\vspace*{-1cm}

\begin{abstract}
Stably supported quantales generalize pseudogroups and provide an algebraic context in which to study the correspondences between inverse semigroups and \'etale groupoids. Here we study a further generalization where a non-unital version of supported quantale carries the algebraic content of such correspondences to the setting of open groupoids. A notion of principal quantale is introduced which, in the case of groupoid quantales, corresponds precisely to effective equivalence relations.
\\
\vspace*{-2mm}~\\
\textit{Keywords:} Open localic groupoids, effective equivalence relations, supported quantales.\\
\vspace*{-2mm}~\\
2010 \textit{Mathematics Subject
Classification}: 06F07 (06D22; 20L05; 20M18; 22A22; 54B30)
\end{abstract}

\tableofcontents


\section{Introduction}\label{introduction}

Let $G$ be a topological group. Its topology $\opens(G)$ carries a natural semigroup structure whose multiplication is the pointwise product of open sets:
\[
UV = \{gh\mid g,h\in G\}\;.
\]
More generally, let $G$ be a topological groupoid:
\begin{equation*}
\xymatrix{
G\ =\ G_2\ar[r]^-{m}& G_1\ar@(ru,lu)[]_i\ar@<1.2ex>[rr]^r\ar@<-1.2ex>[rr]_d&&G_0\ar[ll]|u
}\;.
\end{equation*}
Here $G_0$ is the \emph{object space}; $G_1$ is the \emph{arrow space}; $d$ and $r$ are, respectively, the continuous \emph{domain} and \emph{range maps}; $u$ is the continuous \emph{unit arrows map}; $i$ is the \emph{inversion homeomorphism}; $m$ is the continuous \emph{multiplication map};
and $G_2$ is the subspace of the product space $G_1\times G_1$ that consists of the \emph{composable pairs} of arrows; that is,
\[
G_2=\bigl\{(g,h)\in G_1\times G_1\mid r(g)=d(h)\bigr\}\;.
\]
Then the topology $\opens(G_1)$ carries a semigroup structure if $\opens(G_1)$ is closed under the pointwise product of open sets, which now is given by
\[
UV=\bigl\{m(g,h)\mid (g,h)\in G_2\bigr\}\;.
\]
Equivalently, the multiplication is well defined precisely for those groupoids whose domain map $d$ is open (which is trivially true for a topological group). These are called \emph{open} groupoids.

Such semigroups encode plenty of information about open groupoids \cite{Re07,PR12}, hence yielding algebraic tools with which to study geometric structures that give rise to them. One particular class of examples which is worthy of special mention is that of \emph{{\'e}tale} groupoids (those such that $d$ is a local homeomorphism), for which $u:G_0\to G_1$ is an open map (so, for topological groups, being \'etale means being discrete). Hence, for an \'etale groupoid $G$ the semigroup $\opens(G_1)$ is a monoid whose unit is the image $u(G_0)$. In fact, the openness of $u$ is equivalent, for any open groupoid $G$, to $G$ being \'etale, so, in essence, the open groupoids are the topological groupoids whose topologies are semigroups, and the \'etale groupoids are the topological groupoids whose topologies are monoids.

More precisely, the correspondence between semigroups and groupoids also takes into account a complete lattice structure (the order is given by inclusion of open sets) that turns the semigroups into quantales. In the case of \'etale groupoids these are unital quantales known as inverse quantal frames, and they relate closely to inverse semigroups because the category of pseudogroups (complete and infinitely distributive inverse monoids) is equivalent to the category of inverse quantal frames. This is proved in~\cite{Re07}, where the relations to \'etale groupoids, both topological and localic, are established precisely and include a bijection, up to isomorphisms, between the class of localic \'etale groupoids and that of inverse quantal frames. A consequence of these is that the role played by inverse
semigroups in relation to topological \'etale groupoids (see, \eg, \cites{Kumjian84,RenaultLNMath,Paterson}) is subsumed by inverse quantal frames (this is further generalized for non-involutive quantales and \'etale categories in~\cite{KL}), and in \cite{MaRe10} it has been shown that 
this generalizes classical topological correspondences between inverse semigroups and
germ groupoids (see also \cite{LL}).

Given such results for \'etale groupoids one is naturally led to asking whether open non-\'etale groupoids relate to their quantales in an equally well behaved way. This question has been addressed in~\cite{PR12}, where technical difficulties were identified which do not exist in the case of \'etale groupoids. In the present paper we propose to address this correspondence again, from a  perspective that is already present in~\cite{Re07} but not in~\cite{PR12}. In order to explain this let us begin by recalling one particular aspect of the theory of inverse quantal frames developed in \cite{Re07}, namely that such quantales are introduced as instances of a more general class, that of stably supported quantales, which has nice algebraic properties and is interesting in its own right. A supported quantale, and specifically one whose support is stable, can be regarded as an abstract generalization (more general than modular quantales) of the unital involutive quantale of binary relations $\wp(X\times X)$ on a set $X$. For instance the applications to propositional normal modal logics in \cite{MarcR} are based on this idea and show that stably supported quantales provide useful ``Lindenbaum algebras'' that not only take  the algebra of propositions into account but also the algebra of accessibility relations, thus providing a full algebraicization of such logics with which, in particular, constructive versions of completeness theorems can be proved. One advantage of having a notion of supported quantale that caters for open groupoids is therefore the possibility of carrying the semantics of such logics to examples such as the holonomy groupoid of a foliated manifold (see, \eg, \cite{MoerMrcunBook}), without having to resort to a Morita equivalent \'etale groupoid.

The algebraic characterization of the class of quantales that corresponds to open groupoids given in \cite{PR12} is a direct generalization of inverse quantal frames and no corresponding generalization of supported quantales is provided. The purpose of this paper is to address the correspondence between quantales and non-\'etale groupoids in a way that recovers some of the algebraic simplicity and convenience of supported quantales. Technically, we work with involutive quantales (not necessarily unital) which are also $A$-$A$-bimodules satisfying suitable conditions, where $A$ is a locale playing a role similar to that of the unit space of a groupoid. Then a general support $\spp:Q\to A$ is defined to be a sup-lattice homomorphism satisfying conditions that mimick those of \cite{Re07}. Due to the absence of the unit for the quantale multiplication we shall see that no naive generalization of stable supports is enough to obtain a theory with the same good properties of the unital case. For that reason we introduce the stronger notion of \emph{equivariantly supported $A$-$A$-quantale}. This will be fully developed in section 3 and then it will be used all over the rest of the paper.

Then section 4 addresses these generalized supported quantales when they are also locales, \ie, quantal frames. The main aim is not only to complete the toolbox needed in order to describe the quantales of open groupoids but also to develop the theory of $A$-$A$-quantal frames on its own. In particular, we shall define \emph{principal quantales}. Technically these are equivariantly supported $A$-$A$-quantal frames such that $Q\cong \rs(Q)\otimes_{\ts(Q)} \ls(Q)$, where $\rs(Q)$ and $\ls(Q)$ are the subquantales of right and left sided elements, respectively, and $\ts(Q)=\rs(Q)\cap \ls(Q)$. This notion will play a central role in this paper and will be addressed again in section 5. The notion of \emph{reflexive quantal frame} is also introduced, aiming to make up for the loss of the multiplicative unit of our quantales. A reflexive quantal frame is an $A$-$A$-quantal frame equipped with a suitable locale homomorphism $\upsilon:Q\to A$ that in the unital case is the one given by $\upsilon(q)=q\wedge e$. Finally, similarly to a part of \cite{PR12}, we use the multiplicativity axiom, which is automatically satisfied by inverse quantal frames. The rest of section 4 deals with the study of properties of equivariantly reflexive supported $A$-$A$-quantal frames satisfying the multiplicativity axiom.

In section 5 we reframe the work of \cite{PR12} in the language of non-unital supported quantales. This provides a natural algebraic description of general open groupoids, which we apply elsewhere~\cite{QuijanoPhD}*{Chapter 6} in order to extend the correspondence between groupoid sheaves and quantale sheaves that was carried out in \cite{GSQS} for \'etale groupoids and inverse quantal frames. We also expect that this theory will have applications to specific examples of non-\'etale groupoids such as the \'etale-complete groupoids of \cites{KoMoer,Moer88,Moer90,Bunge}, which in the present paper will surface in the specific form of effective equivalence relations. In order to obtain the envisaged algebraic description of open groupoids we shall begin by introducing two independent axioms. The first one concerns \emph{unit laws}, which together with the axioms of the previous sections give us the following result: if $(Q,\spp,\upsilon)$ is a multiplicative equivariantly supported reflexive $A$-$A$-quantale frame that satisfies the unit laws, and if $G$ is its associated involutive localic graph,
then $G$ is an open involutive category (see Lemma~\ref{opencategory}) where $Q=\opens(G_1)$ and $A=\opens(G_0)$. We note that in the unital case such laws are not required because the map $\upsilon$ is an open map of locales. The second axiom expresses \emph{inverse laws}, and it will be introduced in order to define the notion of \emph{groupoid quantale}, by which is meant a multiplicative equivariantly reflexive $A$-$A$-quantale frame $(Q,\spp,\upsilon)$ satisfying the unit laws and the inverse laws. This leads to our main result, namely: if $(Q,\spp,\upsilon)$ is a groupoid quantale  then its involutive localic graph $G$ is an open groupoid (see Theorem~\ref{opengroupoid}). In the unital case a quantal frame $Q$ satisfies the inverse laws if and only if it it is covered by its partial units: $\V \ipi(Q)=1_Q$. These two axioms provide us with the most perspicuous non-unital generalization of inverse quantal frames so far.

The last part of this paper addresses two specific examples of quantales of non-\'etale groupoids. The first example is the quantale of the ``pair groupoid'' of an open groupoid (see section \ref{pairgroupoid}). Then we shall revisit the notion of principal quantale which, in the case of groupoid quantales, will be seen to correspond precisely to effective equivalence relations (see section \ref{subsec: principal groupoids}). In particular, this gives us a first example of the quantale of an \'etale-complete groupoid in a simplified situation, namely when the topos $BG$ is localic (see Corollary~\ref{cor: etalecompleteness}).


\section{Preliminaries}

The purpose of this section is to recall some definitions and to set up notation and terminology.


\subsection{Quantales}

By an \emph{involutive quantale} is meant a sup-lattice $Q$ equipped with an associative \emph{multiplication} $(a,b)\mapsto ab$ satisfying
\[
a\bigl(\V S\bigr) = \V_{b\in S} ab\;,\quad\bigl(\V S\bigr) a = \V_{b\in S} ba
\]
for all $a\in Q$ and $S\subset Q$, and with a monotone semigroup \emph{involution} $a\mapsto a^*$. Note that $1_Q=\V Q$ and $0_Q=\V\emptyset$ and an involution is necessarily an order isomorphism, and thus it preserves joins. If $Q$ has a multiplication unit $e_Q$ (or just $e$) that makes it a monoid then we say that $Q$ is a \emph{unital involutive quantale}. We denote by $\rs(Q)$, $\ls(Q)$, and $\ts(Q)$, respectively,  the subquantales of right sided, left sided, and two sided elements of $Q$:
\begin{align*}
\rs(Q) &=\{a\in Q\mid a1\le a\}\;,\\
\ls(Q) &=(\rs(Q))^*\;,\\
\ts(Q) &= \rs(Q)\cap \ls(Q)\;.
\end{align*}

A \emph{homomorphism} of involutive quantales is a homomorphism $f:Q\to R$ of involutive semigroups that is also a homomorphism of sup-lattices; that is, for all $S\subset Q$ and $a,b\in Q$ we have
\[
f\bigl(\V S\bigr)=\V_{a\in S} f(a)\;,\quad f(ab)=f(a)f(b)\;,\quad f(a^*)=f(a)^*\;.
\]
The category of involutive quantales and their homomorphisms will be denoted by $\quantales$. The subcategory with the same objects and whose homomorphisms $f:Q\to R$ are \emph{strong}, \ie, such that $f(1_Q)=1_R$, is denoted by $\stquantales$.
A homomorphism of unital involutive quantales is \emph{unital} if it is a monoid homomorphism. The subcategory of $\quantales$ that consists of the unital quantales and unital homomorphisms is denoted by $\uquantales$.


\subsection{Locales and groupoids}

We denote by $\Loc$ the category of \emph{locales}, which is the dual of the category $\Frm$ of frames and frame homomorphisms. The arrows of $\Loc$ are referred to as \emph{continuous maps}, or simply \emph{maps}, of locales. If $X$ is a locale we shall usually write $\opens(X)$ for the same locale regarded as an object of $\Frm$ (an imitation of the notation for the topology of a topological space $X$). 
If $f:X\to Y$ is a map of locales we shall refer to the frame homomorphism $f^*:\opens(Y)\to \opens(X)$ that defines it as its \emph{inverse image homomorphism}. A map of locales $f: L\to M$ is said to be \emph{semiopen} if $f^*:\opens(M)\to \opens(L)$ preserves all meets (or, equivalently if it has a left adjoint $f_!:\opens(X)\to \opens(Y)$ called the \emph{direct image}), and, it is said to be \emph{open} if it is semiopen and satisfies the so-called \emph{the Frobenius reciprocity condition}:
\begin{equation}\label{preliminaries, eq: frobeniuscondition}
f_!(x\wedge f^*(y)) = f_!(x)\wedge y\,,
\end{equation}
for all $x\in \opens(X)$ and $y\in \opens(Y)$. The product of $X$ and $Y$ in $\Loc$ is $X\times Y$. it coincides with the tensor product $\opens(X)\otimes\opens(Y)$ in the \emph{category of sup-lattices} $\SL$, so we write $\opens(X\times Y)=\opens(X)\otimes \opens(Y)$.

An \emph{internal groupoid} $G$ in a category $C$ (with enough pullbacks) consists of objects $G_0$ and $G_1$ of $C$, of \emph{objects} and \emph{arrows}, respectively, equipped with the following morphisms in $C$ satisfying the usual axioms of an internal category plus the inverse laws of a groupoid,
\[
\xymatrix{
G=\quad G_2\ar[r]^-m&G_1\ar@(ru,lu)[]_i\ar@<1.2ex>[rr]^r\ar@<-1.2ex>[rr]_d&&G_0\ar[ll]|u
}\;,
\]
where $G_2$ is the pullback of the \emph{domain} and \emph{range} morphisms:
\[
\xymatrix{
G_2 \ar[r]^{\pi_1} \ar[d]_{\pi_2} & G_1 \ar[d]^{r}  \\
 G_1\ar[r]_{d}  & G_0
}
\]
\begin{example}

\begin{itemize}
\item A \emph{topological groupoid} is an internal groupoid in $\Top$.
\item A \emph{Lie groupoid} is an internal groupoid in the category of smooth manifolds
such that $d$ is a submersion (so that $G_2$ is well defined).
\item A \emph{localic groupoid} is an internal
groupoid in $\Loc$.
\end{itemize}
\end{example}
A localic groupoid $G$ is said to be open if $d$ is an open map. Thus, if $G$ is
open the multiplication map $m$ is also an open map. A \emph{localic \'etale groupoid} is
a localic groupoid such that $d$ is a local homeomorphism, in which case all
the structure maps are local homeomorphisms and, hence, $G_0$ is isomorphic
to an open sublocale of $G_1$.


\subsection{Groupoids and stably supported quantales}\label{unitalsupports}

Let $G$ be an open localic groupoid. Since the multiplication map $m$ is open, there is a sup-lattice homomorphism defined as the following composition (in $\SL$):
\[
\xymatrix{
\opens(G_1)\otimes \opens(G_1) \ar@{->>}[r] & \opens(G_2) \ar[r]^{m_!} &  \opens(G_1)\;.
}
\]
This defines an associative multiplication on $\opens(G_1)$ which together with the
isomorphism 
\[
\xymatrix{
\opens(G_1) \ar[r]^{i_!} & \opens(G_1)
}\;.
\]
makes $\opens(G_1)$ an involutive quantale. This quantale is denoted by $\opens(G)$. It is a worth mentioning that the involutive quantale $\opens(G)$ of an open groupoid $G$ is unital if and only
if $G$ is \'etale, in which case the unit is $e = u_!(1)$ and $u_!$ defines an order-isomorphism
\[u_! : \opens(G_0)\to \downsegment(e)=\{  a\in \opens(G): a\leq e  \}\;. \]
Hence, in particular, $\downsegment(e)$ is a frame.

Let $Q$ be a unital involutive quantale. We recall that by a \emph{support} on $Q$ is meant a sup-lattice homomorphism $\spp: Q\to Q$  satisfying the following conditions for all $a\in Q$:
\begin{align*}
\spp(a) &\leq e \\
\spp(a) &\leq aa^*\\
a &\leq \spp(a)a\;.
\end{align*}
The support is said to be \emph{stable} if in addition we have, for all $a, b\in Q$,
\begin{align*}
\spp(ab) &= \spp(a\spp(b))\;.
\end{align*}
We remark that the quantale $\opens(G)$ of an \'etale groupoid $G$ has a stable support given by $u_!\circ d_!:\opens(G)\to \opens(G)$.

For any unital quantale $Q$ with a support, the following equalities hold for all $a, b\in Q$:
\begin{align*}
\spp(a)1 & = a1\;,\\
\spp(b)   & = b \quad\text{if }b\leq e\;.
\end{align*}
The unital involutive subquantale $\downsegment(e)$ has trivial involution and it is a locale with multiplication equal to $\wedge$. We denote this locale by $Q_0$ and refer to it as the \emph{base locale} of $Q$.
We further recall that any stably supported quantale $Q$ admits a unique support, given by the following formulas,
\begin{align}
\spp(a) &= a1\wedge e\;,\\
\spp(a) &= aa^*\wedge e\;,
\end{align}
and, if $Q$ is a stably supported quantale, for all $b\in Q_0$ and $a\in Q$ we have
\begin{align*}
ba & = b1\wedge a\;.
\end{align*}
Moreover, a support is stable if and only if it is a homomorphism of $Q_0$-modules; that is, for all $b\in Q_0$ and $a\in Q$ we have
\begin{align*}
\spp(ba) & = b\spp(a)\;.
\end{align*}
A unital involutive quantale equipped with a stable support is said to be \emph{stably supported}. Having a stable support is a property rather than structure on a unital involutive quantale, and homomorphisms of unital involutive quantales between stably supported quantales automatically commute with the supports. Denoting the full subcategory of $\uquantales$ whose objects are the stably supported quantales by $\StabQu$, we have

\begin{theorem}\cite{Re07}
$\StabQu$ is a reflective full subcategory of $\uquantales$.
\end{theorem}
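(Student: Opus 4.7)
Fullness of the inclusion is already recorded in the text: homomorphisms of unital involutive quantales between stably supported quantales automatically commute with the supports. Reflectivity then reduces to constructing, for each $Q\in\uquantales$, a unit $\eta_Q\colon Q\to\widetilde Q$ into a stably supported quantale enjoying the universal property, and my plan is to obtain this via the adjoint functor theorem applied to the inclusion.

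The bulk of the work is to verify that $\StabQu\hookrightarrow\uquantales$ preserves small limits. For products: given a family $(Q_i)_i$ in $\StabQu$, the usual product $\prod_i Q_i$ in $\uquantales$ (component\-wise as sup-lattices, with component\-wise multiplication, involution and unit) carries the component\-wise support $\spp\bigl((a_i)_i\bigr)=\bigl(\spp_i(a_i)\bigr)_i$, whose stability follows component\-wise from that of each $\spp_i$, and which agrees with the canonical formula $aa^*\wedge e$ computed pointwise. For equalizers: given $f,g\colon Q\rightrightarrows R$ between stably supported quantales, the equalizer $E=\{a\in Q\mid f(a)=g(a)\}$ is a sub-sup-lattice closed under multiplication, involution and the unit; fullness forces $f\circ\spp_Q=\spp_R\circ f$ and similarly for $g$, so $E$ is closed under $\spp_Q$, and the restriction $\spp_Q|_E$ is easily seen to be a stable support on $E$ that coincides with $aa^*\wedge e$ computed inside $E$ (the latter meet exists because $aa^*\wedge_Q e$ already lies in $E$). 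Together these give closure under all small limits.

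Once limit preservation is established, reflectivity follows by the adjoint functor theorem, invoked either as the special version (after exhibiting a cogenerator in $\uquantales$) or, more conveniently, via the fact that $\uquantales$ is locally presentable --- being the category of models of a small essentially algebraic theory --- so that it suffices additionally to check closure under sufficiently filtered colimits. The main obstacle I expect is this solution-set/filtered-colimits verification, because the stability equation $\spp(ab)=\spp(a\spp(b))$, once unpacked via $\spp(a)=aa^*\wedge e$, implicates a binary meet that is not primitive in the signature of $\uquantales$. I would handle this by bounding the size of a candidate reflection: every element of the putative $\widetilde Q$ arises as a join of terms built from the quantale operations together with the derived operation $\spp$, applied to elements in the image of $\eta_Q$, producing a solution set whose cardinality is bounded in terms of $|Q|$, at which point the general adjoint functor theorem applies and delivers the desired left adjoint.
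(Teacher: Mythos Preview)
The paper does not prove this theorem; it is quoted from \cite{Re07} without argument. The closest analogue that \emph{is} proved here is the reflectivity of $\eqsppQu$ in $\basedQu$, and there the argument is a direct construction: one regards the axioms of a stable (resp.\ equivariant) support as equational laws for ``sup-algebras'' and builds the reflection as a quotient by a nucleus generated by those relations. That is almost certainly the method of \cite{Re07} as well, and it has the advantage of producing an explicit description of the unit $Q\to\widetilde Q$.

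Your adjoint-functor-theorem strategy is a legitimate alternative, and the limit-preservation part is correct: products carry the componentwise support, and for an equalizer $E\subset Q$ the fullness observation forces $\spp_Q(E)\subset E$, after which the support axioms and stability restrict verbatim. The genuine gap is in the second half. Your claim that $\uquantales$ is locally presentable ``being the category of models of a small essentially algebraic theory'' does not stand: the signature contains a $\kappa$-ary join operation for every cardinal $\kappa$, so the theory is a proper class of operations, not a small one. (More is true: the underlying category $\SL$ is self-dual via $L\mapsto L^{\mathrm{op}}$, and a locally presentable category whose opposite is also locally presentable must be a poset, so $\SL$ is not locally presentable; similar obstructions affect $\uquantales$.) Hence the ``locally presentable $+$ closure under filtered colimits'' route is not available as stated.

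Your fallback via the general AFT with a solution set, however, is sound and is what you should commit to. Given $f\colon Q\to S$ with $S$ stably supported, close $f(Q)\cup\{e\}$ under multiplication, involution and the unary operation $\spp_S$; this yields a set $T$ with $\lvert T\rvert\le \lvert Q\rvert+\aleph_0$, and then $\{\bigvee U: U\subseteq T\}$ is a sub-sup-lattice of $S$ of cardinality $\le 2^{\lvert T\rvert}$ which is closed under all the operations (since each distributes over joins) and under $\spp_S$, hence is a stably supported subquantale through which $f$ factors. That bound gives the solution set, and the general AFT delivers the left adjoint. So drop the local-presentability sentence and make the cardinality bound the actual argument; then your proof is complete, differing from \cite{Re07} only in being non-constructive.
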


By a \emph{stable quantal frame} is meant a stably supported quantale which is also a locale. The following equivalent conditions hold for all stable quantal frames:
\begin{align*}
(a\wedge e)1 &\geq \V_{yz^*\leq a}y\wedge z\;,\\
(a\wedge e)1 &\geq \V_{xx^*\leq a}x\;.
\end{align*}

An \emph{inverse quantal frame} is a stable quantal frame $Q$ that satisfies 
\begin{align*}
\V \ipi(Q) &= 1,
\end{align*}
where $\ipi(Q)=\{  s\in Q: ss^* \vee s^*s \leq e  \}$ is the set of \emph{partial units} of $Q$.
The inverse quantal frames $Q$ are precisely the quantales of the form $Q\cong \opens(G)$ for a localic \'etale groupoid $G$. We also recall that the \emph{category of inverse quantal frames} $\iqf$ has the (necessarily involutive) homomorphisms of unital quantales as morphisms.


\section{Supports}\label{supports}\label{supportedquantales}


\subsection{Based quantales}

\begin{definition}\label{principal, def: AAbimodule}
Let $A$ be a locale. An \emph{$A$-$A$-bimodule} $M$ is a sup-lattice equipped with two unital (resp.\ left and right) $A$-module structures
\[ (a,m)\mapsto \lres{a}{m}\quad \text{and}\quad (a,m)\mapsto \rres{m}{a}\] 
satisfying the following additional condition for all $a, b \in A$
and $m \in M$:
\begin{align}\label{associativitybimodules}
  \rres{(\lres{a}{m})}{b} & = \lres{a}{(\rres{m}{b})}\;.  
\end{align}
A \emph{homomorphism} of $A$-$A$-bimodules is a sup-lattice homomorphism that preserves both actions.
\end{definition}

The notation for the left and the right action is meant to convey the idea of restriction, as in the following example:
\begin{example}
Let $A$ and $M$ be locales, and $d,c:M\to A$ two maps. The frame homomorphisms $d^*,c^*:A\to M$ make $M$ an $A$-$A$-bimodule with the actions defined by
\begin{align*}
\lres{a}{m} &= d^*(a)\wedge m\;,\\
\rres{m}{a} &= c^*(a)\wedge m\;.
\end{align*}
\end{example}

\begin{definition}\label{principal, def: AAquantale}
By an \emph{$A$-$A$-quantale} $Q$ is meant an $A$-$A$-bimodule equipped with a quantale
multiplication $(x, y)\mapsto xy$ satisfying the following additional conditions
for all $a \in A$ and $x, y \in Q$:
\begin{align}
 (\lres{a}{x})y & =  \lres{a}{(xy)}\;,\label{AAquantale1}\\
 (\rres{x}{a})y & =   x(\lres{a}{y})\;,\label{AAquantale2}\\
 \rres{(xy)}{a} & =   x(\rres{y}{a})\;.\label{AAquantale3}
\end{align}
\end{definition}

The second condition is equivalent to stating that the quantale multiplication on $Q$ is well defined as a sup-lattice homomorphism $Q\otimes_A Q\to Q$, and the two other conditions say that this is actually a homomorphism of $A$-$A$-bimodules. Hence, an $A$-$A$-quantale is just a semigroup in the monoidal category of $A$-$A$-bimodules.

\begin{lemma}\label{lem:restrictionhom}
Let $Q$ be an $A$-$A$-quantale. If $Q$ is unital then for all $a,b\in A$ we have
\[
\lres{(ab)}e=(\lres a e)(\lres b e)\;.
\]
Hence, the mapping $a\mapsto\lres a e$ is a homomorphism of unital quantales $A\to Q$.
\end{lemma}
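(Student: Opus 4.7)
The plan is to chain the axioms together so the computation of $(\lres{a}{e})(\lres{b}{e})$ collapses to $\lres{(ab)}{e}$ by first sliding the left action out of the product and then using the unital left-action law.

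First, I would apply axiom \eqref{AAquantale1} with $x=e$ and $y=\lres{b}{e}$ to obtain
\[
(\lres{a}{e})(\lres{b}{e}) \;=\; \lres{a}{(e \cdot \lres{b}{e})} \;=\; \lres{a}{(\lres{b}{e})}\,,
\]
where the second equality uses that $e$ is the multiplicative unit of $Q$. Next, since $A$ is a locale (viewed as a commutative idempotent quantale with multiplication $\wedge$), the left $A$-module action on $Q$ satisfies $\lres{a}{(\lres{b}{m})}=\lres{(ab)}{m}$ for all $a,b\in A$ and $m\in Q$. Applied to $m=e$, this gives $\lres{a}{(\lres{b}{e})} = \lres{(ab)}{e}$, which chains with the previous display to yield the desired identity.

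For the ``Hence'' clause, I would check the three conditions for $\varphi(a):=\lres{a}{e}$ to be a unital homomorphism $A\to Q$. Join preservation follows immediately from the fact that the left action is a sup-lattice homomorphism in its first argument (being a module structure). Multiplicativity in the quantale sense, $\varphi(ab)=\varphi(a)\varphi(b)$, is exactly the identity just established. Unitality, $\varphi(1_A)=e$, follows because the left action is required to be unital in Definition~\ref{principal, def: AAbimodule}, so $\lres{1_A}{e}=e$.

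There is no real obstacle here; the only mildly delicate point is making explicit that the locale $A$ is to be treated as a quantale with $\wedge$ for multiplication and $1_A$ for unit, so that ``homomorphism of unital quantales'' is the right statement, and that the unital left-action axiom combined with $A$'s multiplication being meet is what licenses the step $\lres{a}{\lres{b}{e}}=\lres{(ab)}{e}$.
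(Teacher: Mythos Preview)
Your proof is correct and is essentially the paper's own argument run in reverse: the paper computes $\lres{(ab)}e = \lres a{(\lres b e)} = \lres a{(e(\lres b e))} = (\lres a e)(\lres b e)$, using exactly the same three ingredients (the module law $\lres a{\lres b m}=\lres{(ab)}m$, the fact that $e$ is a unit, and axiom~\eqref{AAquantale1}). Your explicit verification of the ``Hence'' clause is a welcome addition, since the paper leaves it implicit.
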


\begin{proof}
Let $a,b\in A$. Then
$
\lres{(ab)}e = \lres a{(\lres b e)}=\lres a{\bigl( e(\lres b e)\bigr)}=(\lres a e)(\lres b e)
$. \qed
\end{proof}

\begin{definition}\label{principal, def: involutive}
An $A$-$A$-quantale is \emph{involutive} if it is an involutive semigroup; the involution is denoted by $a\mapsto a^{*}$ and it is required to satisfy, besides the standard conditions $x^{**}=x$ and $(xy)^{*}=y^{*}x^{*}$, the following two conditions:
\begin{align}
 (\V_i x_i)^{*} & =\V_i x^{*}_i\;, \\
 (\lres{a}{\rres{x}{b}})^{*} & = \lres{b}{\rres{x^{*}}{a}}\;.
\end{align}
\end{definition}

\begin{remark}
An involutive $A$-$A$-quantale is not the same as an involutive semigroup in the category of $A$-$A$-bimodules. The latter would require $(-)^*$ to be a homomorphism of bimodules, hence satisfying $(\lres{a}{\rres{x}{b}})^{*}= \lres{a}{\rres{x^{*}}{b}}$\;.
\end{remark}

\begin{definition}
By a \emph{based quantale} will be meant an involutive quantale $Q$ equipped with the structure of an involutive $Q_0$-$Q_0$-quantale for some locale $Q_0$. A \emph{homomorphism} of based quantales $f:Q\to R$ consists of a pair $(f_1,f_0)$ where $f_1:Q\to R$ is a homomorphism of involutive quantales and $f_0:Q_0\to R_0$ is a homomorphism of locales such that the following conditions hold for all $x\in Q$ and $a\in Q_0$:
\begin{align*}
f_1\bigl(\lres a x\bigr) &=\lres{f_0(a)}{f_1(x)}\;;\\
f_1\bigl(\rres x a\bigr) &=\rres{f_1(x)}{f_0(a)}\;.
\end{align*}
The category thus obtained is called the \emph{category of based quantales} and it is denoted by $\basedQu$. By a \emph{strong} homomorphism of based quantales will be meant a homomorphism $f$ such that $f_1$ is a strong homomorphism of involutive quantales. The subcategory of $\basedQu$ whose homomorphisms are strong is denoted by $\stbasedQu$.
\end{definition}


\subsection{General supports}

\begin{definition}\label{principal, def: nonunitalsupp}
A based quantale $Q$ is \emph{supported} if it is equipped with a sup-lattice homomorphism $\spp_Q: Q\rightarrow Q_0$ (denoted simply $\spp$ if no confusion will arise), called the \emph{support}, which satisfies the following conditions for all $x,y\in Q$:
\begin{align}
\spp(1_Q)&=1_{Q_0}\;,\label{spp1} \\ 
\lres {\spp(x)}{y} &\leq xx^{*}y\;, \label{spp2}\\
\lres {\spp(x)}{x} &= x \label{spp3}\;.
\end{align}
By a \emph{homomorphism} $f:Q\to R$ of supported quantales is meant a homomorphism of based quantales $(f_1,f_0)$ that commutes with the supports; that is, such that for all $x\in Q$ we have
\[
f_0(\spp_Q(x)) = \spp_R(f_1(x))\;.
\]
The category thus obtained is called the \emph{category of supported quantales} and it is denoted by $\sppQu$. We shall denote by $\stsppQu$ the subcategory of $\sppQu$ whose homomorphisms are strong, and by $\sppuQu$ the subcategory of $\sppQu$ whose objects are unital quantales and whose homomorphisms are unital.
\end{definition}

\begin{example}\label{exm:classicalspp}
Let $Q$ be a unital involutive quantale with a support $\spp$, and let $Q_0=\downsegment(e)$. Then $Q$, with the $Q_0$-$Q_0$-quantale structure given by change of base along the inclusion $Q_0\to Q$, is supported in the sense of Definition~\ref{principal, def: nonunitalsupp}.
\end{example}

For a unital support $\spp$ with $Q_0=\downsegment(e)$ as in the above example it is also true that $\spp:Q\to Q_0$ is surjective, but for non-unital quantales this is not the case in general, as the following example shows.

\begin{example}\label{stablenotequi}
Let $Q$ and $A$ be locales and let $s:Q\to A$ and $r:A\to Q$ be homomorphisms of locales such that $r\circ s=\ident_Q$ (so $Q$ is a retract of $A$ in $\Frm$). Then $Q$, regarded as a commutative quantale with trivial involution, together with the (both left and right) action of $A$ on $Q$ which is defined by change of base along $r$, yields a supported quantale with $Q_0=A$ and support $\spp=s$.
\end{example}

\begin{lemma}\label{propspp}
Let $Q$ be a supported quantale. The following conditions hold for all $x,y\in Q$:
\begin{enumerate}
 \item\label{propspp1} $(\lres {\spp(x)}{y})^*=(\rres {y^*}{\spp(x)} )$\;;
 \item\label{propspp2} $\rres{y}{\spp(x)}\leq yxx^{*}$\;;
 \item\label{propspp3} $x\leq xx^{*}x$\;;
 \item\label{propspp4} $x\leq x1_Q$\;;
 \item\label{propspp5} $1_Q1_Q=1_Q$\;;
 \item\label{propspp6} $\lres{\spp(x1_Q)}{1_Q}=x1_Q$\;;
 \item\label{propspp7} The sup-lattice homomorphism $\spp(Q)\to \rs(Q)$ defined by $x\mapsto \lres{x}{1_Q}$ is a retraction split by the section $\rs(Q)\to \spp(Q)$ which is defined by $x\mapsto \spp(x)$.
\end{enumerate}
\end{lemma}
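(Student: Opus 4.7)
The plan is to prove items (1)--(7) in order, using only the previously established items together with the axioms in Definitions 3.4--3.7. The key thematic point is that, in the absence of a multiplicative unit for $Q$, the top element $1_Q$ with $\spp(1_Q)=1_{Q_0}$ plays the role that $e$ plays in the unital theory; once the idempotence in item (5) is available, products involving $1_Q$ can be collapsed freely, and this is what makes items (6) and (7) work.

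For (1), I apply the involution axiom $(\lres{a}{\rres{z}{b}})^{*}=\lres{b}{\rres{z^{*}}{a}}$ with $a=\spp(x)$, $z=y$ and $b=1_{Q_0}$; unitality of the actions forces $\rres{y}{1_{Q_0}}=y$ and $\lres{1_{Q_0}}{-}=\ident$, which yields the stated equality. For (2), I substitute $y^{*}$ for $y$ in (\ref{spp2}), take involutions (the inequality reverses and $(xx^{*}y^{*})^{*}=yxx^{*}$), and rewrite the left-hand side via (1). For (3), I chain (\ref{spp3}) and (\ref{spp2}) to get $x=\lres{\spp(x)}{x}\le xx^{*}x$.

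Item (4) follows from (3) by monotonicity of the quantale multiplication, using $x^{*}\le 1_Q$ and $1_Q 1_Q\le 1_Q$: $x\le xx^{*}x\le x\cdot 1_Q\cdot 1_Q\le x\cdot 1_Q$. For (5), instantiating (\ref{spp2}) at $x=y=1_Q$ and combining with (\ref{spp1}) gives $1_Q=\lres{\spp(1_Q)}{1_Q}\le 1_Q 1_Q^{*}1_Q\le 1_Q 1_Q\le 1_Q$, so all three are equal. For (6), applying (\ref{spp3}) to $x1_Q$ and using monotonicity of the left action in its second argument produces $x1_Q\le\lres{\spp(x1_Q)}{1_Q}$; for the reverse inequality I use (\ref{spp2}) to bound $\lres{\spp(x1_Q)}{1_Q}$ by $(x1_Q)(x1_Q)^{*}1_Q$, then expand $(x1_Q)^{*}=1_Qx^{*}$ and repeatedly collapse adjacent $1_Q 1_Q$'s via (5), arriving at $x1_Qx^{*}1_Q\le x1_Q\cdot 1_Q=x1_Q$.

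For (7), I first check that $r(a):=\lres{a}{1_Q}$ lies in $\rs(Q)$: by (\ref{AAquantale1}) and (5), $(\lres{a}{1_Q})\cdot 1_Q=\lres{a}{(1_Q 1_Q)}=\lres{a}{1_Q}$. Both $r$ and the candidate section $s(x):=\spp(x)$ are sup-lattice homomorphisms because the left action and $\spp$ preserve arbitrary joins. It then suffices to verify $r\circ s=\ident_{\rs(Q)}$, i.e.\ that $\lres{\spp(x)}{1_Q}=x$ for every $x\in\rs(Q)$: the direction ``$\ge$'' comes from (\ref{spp3}) together with $x\le 1_Q$ and monotonicity, while ``$\le$'' uses (\ref{spp2}) to get $\lres{\spp(x)}{1_Q}\le xx^{*}1_Q\le x\cdot 1_Q\cdot 1_Q=x\cdot 1_Q\le x$, the last step being the defining condition of $\rs(Q)$. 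The only real obstacle is bookkeeping: each step must draw only on items already proved, and the densest computations are in (6) and in the ``$\le$'' half of the splitting identity in (7), both of which rely on invoking (5) to collapse nested occurrences of $1_Q$.
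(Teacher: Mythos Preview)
Your proof is correct and follows essentially the same route as the paper's: items (1)--(6) are argued identically (modulo cosmetic differences in how the chain for (5) is written), and for (7) both you and the paper verify $\lres{\spp(x)}{1_Q}=x$ for $x\in\rs(Q)$. The only real difference is that for (7) the paper simply observes that $x\in\rs(Q)$ means $x=x1_Q$ and then invokes (6) directly to get $\lres{\spp(x)}{1_Q}=\lres{\spp(x1_Q)}{1_Q}=x1_Q=x$, whereas you redo the two inequalities from scratch via (\ref{spp2}) and (\ref{spp3}); this is a harmless redundancy rather than a different idea.
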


\begin{proof}
\eqref{propspp1} follows from $(\lres {\spp(x)}{y})^* = (\lres{\spp(x)}{\rres{y}{1_A}})^* 
   = \lres{1_A}{\rres{y^*}{\spp(x)}}
   = \rres{y^*}{\spp(x)}$.

\eqref{propspp2} follows from \eqref{propspp1} and the properties of the involution: $\rres{y}{\spp(x)}=\rres{y^{**}}{\spp(x)}=(\lres{\spp(x)}{y^*})^{*} \leq (xx^{*}y^{*})^*=yxx^{*}$.

\eqref{propspp3} is an immediate consequence of the axioms: $x=\lres{\spp(x)}{x}\leq xx^{*}x$.

\eqref{propspp4} and \eqref{propspp5} follow immediately from \eqref{propspp3}.

\eqref{propspp6} follows from \eqref{propspp5} and the properties of the involution:
\begin{align*}
\lres{\spp(x1_Q)}{1_Q}&\leq (x1_Q)(x1_Q)^{*}1_Q=x1_Q x^{*}1_Q\leq x1_Q\\
&=\lres{\spp(x1_Q)}{x1_Q}\le\lres{\spp(x1_Q)}{1_Q}\;.
\end{align*}

In order to verify \eqref{propspp7} let $x\in \rs(Q)$. So $x=x1_Q$ and, using \eqref{propspp6}, we obtain $\lres{\spp(x)}{1_Q}=\lres{\spp(x1_Q)}{1_Q}=x1_Q=x$. \qed
 
\end{proof}

We remark that supported quantales are in fact strongly Gelfand (\ie\ they satisfy $a\leq aa^*a$ for all $a\in Q$), which implies that they are stably Gelfand and in turn implies that they are Gelfand (see, \cite{MP1}). The latter implies that $T(Q)$ (the sub-quantale of two sided-elements of $Q$) consists of self-adjoint elements for strongly Gelfand quantales: if $a\in T(Q)$ then
\[ a^*\leq a^*aa^* \leq 1_Q a 1_Q \leq a\;, \]
so $a^* ² a$\;, and thus $a=a^*$.


\subsection{Unital supports}

Let us trim our terminology so as to better relate the notion of support just introduced to the original one of \cite{Re07}:

\begin{definition}
If $Q$ is a unital quantale, by a \emph{unital support} on $Q$ will be meant a support in the sense of section~\ref{unitalsupports}; that is, a sup-lattice homomorphism $\spp_Q:Q\to Q$ (or simply $\spp$) satisfying the following three conditions for all $x\in Q$:
\begin{align*}
\spp(x)&\le e\;;\\
\spp(x)&\le xx^*\;;\\
x&\le \spp(x)x\;.
\end{align*}
We shall call such a quantale \emph{unitally supported}. By a \emph{homomorphism} of unitally supported quantales $f:Q\to R$ will be meant a homomorphism of unital involutive quantales that commutes with the unital supports; that is, such that for all $x\in Q$ we have
\[
f(\spp_Q(x))=\spp_R(f(x))\;.
\]
The category thus obtained will be denoted by $\usppQu$. (Not to be confused with $\sppuQu$, \cf\ Definition~\ref{principal, def: nonunitalsupp}.)
\end{definition}

Given any homomorphism $f$ in $\usppQu$ there is a homomorphism $(f,f\vert_{\downsegment(e)})$ in $\sppuQu$. This yields a faithful functor
$U:\usppQu\to\sppuQu$ that is also injective on objects (\cf\ Example~\ref{exm:classicalspp}), so we shall identify $\usppQu$ with a subcategory of $\sppuQu$.

\begin{lemma}\label{fromspptouspp}
$\usppQu$ is a full and reflective subcategory of $\sppuQu$. To each object $(Q,\spp)$ of $\sppuQu$ the reflection $\eta:(Q,\spp)\to(Q,\spp_e)$ is obtained by defining the unital support $\spp_e$ for all $x\in Q$ by
\[
\spp_e(x)=\lres{\spp(x)}e\;,
\]
and setting $\eta_1=\ident_Q$ and $\eta_0(a)=\lres a e$ for all $a\in Q_0$.
\end{lemma}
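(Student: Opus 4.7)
The plan proceeds in four steps: $(i)$ verify that $\spp_e$ is a unital support on $Q$; $(ii)$ verify that $\eta=(\ident_Q,\eta_0)$ is a morphism of $\sppuQu$; $(iii)$ establish the universal property; and $(iv)$ recover fullness as a byproduct. A background fact used throughout is that a supported quantale is strongly Gelfand (Lemma~\ref{propspp}\eqref{propspp3}), so $\downsegment(e)\subseteq Q$ behaves exactly as in the classical unital setting: it is a locale with multiplication equal to $\wedge$ and has trivial involution.

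For step $(i)$, sup-preservation of $\spp_e$ is immediate as a composition of sup-lattice homomorphisms. The bound $\spp_e(x)\le e$ follows by monotonicity of the action from $\spp(x)\le 1_{Q_0}$ and $\lres{1_{Q_0}}e=e$; the bound $\spp_e(x)\le xx^*$ is axiom~\eqref{spp2} with $y=e$; and the absorption law is
\[
\spp_e(x)\,x=(\lres{\spp(x)}{e})\,x=\lres{\spp(x)}{(ex)}=\lres{\spp(x)}{x}=x
\]
by~\eqref{AAquantale1} and~\eqref{spp3}. For step $(ii)$, $\eta_0:Q_0\to\downsegment(e)$ is well-defined and preserves joins and the top element. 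Meet-preservation uses Lemma~\ref{lem:restrictionhom}: since $a\wedge b=ab$ in the locale $Q_0$, one has $\eta_0(a\wedge b)=(\lres a e)(\lres b e)=\eta_0(a)\wedge\eta_0(b)$, the last equality because $\downsegment(e)$ has $\cdot=\wedge$. Action compatibility reduces via~\eqref{AAquantale1}--\eqref{AAquantale3} to $\lres a x=(\lres a e)\,x$ and $\rres x a=x\,(\lres a e)$; the first is~\eqref{AAquantale1} with $y=e$, and the second follows from~\eqref{AAquantale3} with $y=e$ combined with $(\lres a e)^*=\lres a e$ (trivial involution on $\downsegment(e)$, applied to the identity $\rres e a=(\lres a e)^*$ obtained from the involution axiom). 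Support compatibility is tautological.

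For step $(iii)$, given a morphism $(f_1,f_0):(Q,\spp)\to U(R,\spp_R)$, set $f'=(f_1,f_1|_{\downsegment(e_Q)})$; since $f_1$ is unital this restricts into $\downsegment(e_R)$. Action compatibility for $f'$ is immediate (both sides are multiplications and $f_1$ is a quantale homomorphism), and support compatibility is
\[
f_1(\spp_e(x))=f_1(\lres{\spp(x)}{e})=f_0(\spp(x))\,e_R=f_0(\spp(x))=\spp_R(f_1(x))
\]
using action compatibility of the original $(f_1,f_0)$. The identity $f_1(\lres a e)=f_0(a)\,e_R=f_0(a)$ gives the factorization $f'\circ\eta=(f_1,f_0)$. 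For uniqueness, $g_1=f_1$ is forced, and $g_0$ is determined on the image of $\eta_0$, which is all of $\downsegment(e_Q)$: for $b\le e_Q$ one has $\eta_0(\spp(b))=\spp_e(b)=b$, the standard identity $\spp_e(b)=b$ for $b\le e$ applicable to any unital support. The same surjectivity argument yields step $(iv)$: a $\sppuQu$-morphism between images of objects of $\usppQu$ must satisfy $f_0=f_1|_{\downsegment(e_Q)}$, so it is in the image of $U$.

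The main technical obstacle is transporting the classical properties of $\downsegment(e)$ --- locale structure with $\wedge=\cdot$, trivial involution, and the idempotence identity $\spp_e(b)=b$ for $b\le e$ --- into the non-unital framework; these all rest on the strong Gelfand property guaranteed by the support axioms, and drive both the locale-homomorphism verification for $\eta_0$ and the surjectivity argument underlying uniqueness and fullness.
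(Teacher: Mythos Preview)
Your proof is correct and follows essentially the same approach as the paper's: verify the unital support axioms, check that $\eta$ is a morphism in $\sppuQu$ (using Lemma~\ref{lem:restrictionhom} for the locale homomorphism $\eta_0$), take $f_1$ as the factoring morphism with uniqueness forced by $\eta_1=\ident_Q$, and deduce fullness. You supply more detail than the paper (which declares the unital support verification ``straightforward''), and your fullness argument via the direct identity $f_0=f_1\vert_{\downsegment(e_Q)}$ differs cosmetically from the paper's observation that $\eta$ is an isomorphism on objects of $\usppQu$, but the content is the same. One minor remark: your derivation of $\rres x a=x(\lres a e)$ via \eqref{AAquantale3} and the involution is correct but roundabout---\eqref{AAquantale2} with $y=e$ gives it in one step.
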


\begin{proof}
Proving that $\spp_e$ is a unital support is straightforward. And $\eta$ is a morphism in $\sppuQu$ because $\eta_0$ is a homomorphism of locales (\cf\ Lemma~\ref{lem:restrictionhom}), and it commutes with the supports as required:
\[
\eta_0(\spp(x)) = \lres{\spp(x)}e=\spp_e(x)=\spp_e(\eta_1(x))\;.
\]
Let $R$ be another unitally supported quantale, with support $\spp_R$, and let $f:Q\to R$ be a homomorphism in $\sppuQu$. Let us verify that $f_1:Q\to R$ is a homomorphism in $\usppQu$:
\[
f_1(\spp_e(x))=f_1(\lres{\spp(x)}e)=\lres{f_0(\spp(x))}{f_1(e)}=\lres{\spp_R(f_1(x))}{e}=\spp_R(f_1(x))\;.
\]
(The rightmost equality is a consequence of the identification $\lres a x=ax$ for objects of $\usppQu$.)
In order to see that $f_1:Q\to R$ makes the following diagram in $\sppuQu$ commute
\[
\xymatrix{
Q\ar[rr]^\eta\ar[drr]_f&&Q\ar[d]^{f_1}\\
&&R
}
\]
we need to verify both
\begin{align*}
f_1\circ\eta_0&=f_0\;,\\
f_1\circ\eta_1&=f_1\;.
\end{align*}
For the first equation we observe that for all $a\in Q_0$ we have
\[
f_1\circ\eta_0(a)=f_1(\lres a e)=\lres{f_0(a)}e=f_0(a)\;.
\]
The second equation is immediate because $\eta_1=\ident_Q$. This also implies that $f_1$ is the unique homomorphism in $\usppQu$ that makes the diagram above commute, hence proving that the subcategory is reflective. In order to see that it is also a full subcategory it suffices to notice that the reflection is idempotent in the sense that for $Q\in\usppQu$ the map $\eta:Q\to Q$ is an isomorphism (so the adjunction is a reflection). \qed
\end{proof}


\subsection{Stable supports}

\begin{definition}
Let $Q$ be a supported quantale. The support $\spp$ is \emph{stable} if $\spp{(xy)}\leq \spp(x)$ for all $x,y\in Q$. In this case $Q$ is called \emph{stably supported}.
\end{definition}

\begin{lemma}\label{stable}
Let $Q$ be a supported quantale. The following properties are equivalent:

\begin{enumerate}
\item $\spp(xy)\leq \spp(x)$ for all $x,y\in Q$\quad (the support is stable)\;;
\item $\spp(x1_Q)=\spp(x)$ for all $x\in Q$\;;
\item $\spp{(xy)}=\spp(\rres{x}{\spp(y)})$ for all $x,y\in Q$\;.
\end{enumerate}
\end{lemma}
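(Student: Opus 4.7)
The plan is to prove a cycle of implications $(1)\Rightarrow(2)\Rightarrow(3)\Rightarrow(1)$, although in fact the equivalence of $(1)$ and $(2)$ is virtually automatic, so the substance lies in $(2)\Rightarrow(3)$ and $(3)\Rightarrow(1)$.

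For $(1)\Leftrightarrow(2)$, note that $\spp$ is a sup-lattice homomorphism and hence monotone. The inequality $x\le x1_Q$ from Lemma~\ref{propspp}\eqref{propspp4} yields $\spp(x)\le\spp(x1_Q)$, while $(1)$ applied to the pair $(x,1_Q)$ yields the reverse inequality, giving $(2)$. Conversely, $(2)$ gives $(1)$ instantly, because $xy\le x1_Q$ implies $\spp(xy)\le\spp(x1_Q)=\spp(x)$.

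For $(2)\Rightarrow(3)$, the key identity is $xy=(\rres x{\spp(y)})\,y$. This follows by first rewriting $y=\lres{\spp(y)}y$ using axiom~\eqref{spp3}, so that $xy=x(\lres{\spp(y)}y)=(\rres x{\spp(y)})\,y$ by the bimodule axiom~\eqref{AAquantale2}. Given this identity, stability (which by the first step is equivalent to $(2)$) gives $\spp(xy)=\spp((\rres x{\spp(y)})\,y)\le\spp(\rres x{\spp(y)})$. For the reverse inequality, I would combine Lemma~\ref{propspp}\eqref{propspp2} (which gives $\rres x{\spp(y)}\le xyy^{*}$) with stability applied twice, obtaining $\spp(\rres x{\spp(y)})\le\spp(xyy^{*})\le\spp(xy)$.

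Finally, $(3)\Rightarrow(1)$ is a one-line argument: since $\spp(y)\le 1_A$ and the right action is monotone with $\rres x{1_A}=x$, we have $\rres x{\spp(y)}\le x$, whence $\spp(\rres x{\spp(y)})\le\spp(x)$, and $(3)$ then yields $\spp(xy)\le\spp(x)$.

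I expect the only mildly subtle step to be the identity $xy=(\rres x{\spp(y)})\,y$ in $(2)\Rightarrow(3)$: it requires combining axiom~\eqref{spp3} (which expresses $y$ as a left restriction by its own support) with the $A$-$A$-bimodule compatibility~\eqref{AAquantale2} that converts a left restriction of the second factor into a right restriction of the first. Once this identity is in hand, the rest of the argument is just monotonicity of $\spp$ and repeated use of stability together with Lemma~\ref{propspp}.
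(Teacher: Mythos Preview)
Your proof is correct and follows essentially the same route as the paper: the equivalence $(1)\Leftrightarrow(2)$ is handled identically, and your ``$(2)\Rightarrow(3)$'' step (which really uses $(1)$ via the already-established equivalence) is exactly the paper's $(1)\Rightarrow(3)$ argument, namely the chain $\spp(xy)=\spp((\rres x{\spp(y)})y)\le\spp(\rres x{\spp(y)})\le\spp(xyy^*)\le\spp(xy)$ using \eqref{spp3}, \eqref{AAquantale2}, Lemma~\ref{propspp}\eqref{propspp2}, and stability. The implication $(3)\Rightarrow(1)$ is also the same one-line observation in both.
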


\begin{proof}
$(1\Rightarrow 2)$ Since $x\leq x1_Q$, from stability we obtain   
$\spp(x)\leq\spp(x1_Q)\leq\spp(x)$.

$(2\Rightarrow 1)$ This follows from $\spp(xy)\leq \spp(x1_Q)=\spp(x)$.

$(3\Rightarrow 1)$ This is immediate because $\rres{x}{\spp(y)}\leq x$.

$(1\Rightarrow 3)$ Assuming that $\spp$ is stable, and applying Lemma~\ref{propspp}\eqref{propspp2}, we have:
\[
\spp(xy)  =  \spp(x(\lres{\spp(y)}{y})) = \spp((\rres{x}{\spp(y)})y) \leq \spp(\rres{x}{\spp(y)}) \leq \spp(xyy^*)\leq \spp(xy)\;. \qed
\]
\end{proof}

Note that, contrary with the situation for unital stably supported quantales, we do not necessarily have $\rs(Q)\cong Q_0$, as the above Example~\ref{stablenotequi} shows. But, if $\spp$ is stable and $Q$ has a multiplication unit $e$, the unital support $\spp_e$ of Lemma~\ref{fromspptouspp} is obviously stable and thus $\rs(Q)\cong\downsegment(e)$.


\subsection{Equivariant supports}

\begin{definition}\label{defequivariant}
Let $Q$ be a based quantale. A support $\spp$ is said to be \emph{equivariant} if for all $a\in Q_0$ and $x\in Q$ we have
\begin{equation}
\spp(\lres{a}{x})=a\wedge \spp(x)\;.
\end{equation}
A supported quantale is called \emph{equivariantly supported} whenever the support is equivariant. The full subcategory of $\sppQu$ whose objects are the equivariantly supported quantales is denoted by $\eqsppQu$. The subcategory of $\eqsppQu$ whose homomorphisms are strong is denoted by $\steqsppQu$, and the subcategory of $\eqsppQu$ whose objects are unital quantales and whose homomorphisms are unital is denoted by $\ueqsppQu$.
\end{definition}

\begin{lemma}\label{adjoint}
Let $Q$ be a based quantale, and let $\spp$ be an equivariant support. Then $\spp:Q\to Q_0$ is left adjoint to the sup-lattice homomorphism $\lres{(.)}{1_Q}:Q_0\rightarrow Q$, and, moreover, the adjunction is a reflection. Hence, the support $\spp$ is uniquely determined and $\lres{(.)}{1_Q}$ also preserves arbitrary meets.
\end{lemma}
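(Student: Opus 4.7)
The plan is to establish directly the Galois connection $\spp(x) \leq a \iff x \leq \lres{a}{1_Q}$ for all $x \in Q$ and $a \in Q_0$, from which every assertion in the lemma follows by standard facts about adjunctions between posets.

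For the $(\Rightarrow)$ direction, I would use axiom \eqref{spp3} to write $x = \lres{\spp(x)}{x}$, and then exploit monotonicity of the left action in both arguments to obtain
\[
x = \lres{\spp(x)}{x} \leq \lres{a}{x} \leq \lres{a}{1_Q}.
\]
For the $(\Leftarrow)$ direction, assuming $x \leq \lres{a}{1_Q}$, I would apply $\spp$ (which is monotone, being a sup-lattice homomorphism) and invoke equivariance together with \eqref{spp1}:
\[
\spp(x) \leq \spp\bigl(\lres{a}{1_Q}\bigr) = a \wedge \spp(1_Q) = a \wedge 1_{Q_0} = a.
\]
This yields the adjunction $\spp \dashv \lres{(.)}{1_Q}$.

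To upgrade this to a reflection, I need the counit to be an isomorphism, i.e., $\spp(\lres{a}{1_Q}) = a$ for every $a \in Q_0$. But the computation above already gives this equality, since both inequalities $\spp(\lres{a}{1_Q}) = a \wedge 1_{Q_0} = a$ are equalities under equivariance and \eqref{spp1}. The uniqueness of $\spp$ is then automatic, because once the module structure on $Q$ is fixed, so is the map $\lres{(.)}{1_Q}$, and an adjoint (on either side) is uniquely determined by the other. Finally, $\lres{(.)}{1_Q}$, being a right adjoint between complete lattices, necessarily preserves arbitrary meets.

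There is no real obstacle here: the argument is a direct and routine unpacking of the axioms \eqref{spp1} and \eqref{spp3} and of the equivariance identity. The only point worth being careful about is the order in which the axioms are applied in the $(\Rightarrow)$ direction, where one must first invoke \eqref{spp3} to replace $x$ by $\lres{\spp(x)}{x}$ before monotonicity of the action can be used to absorb $x$ into $1_Q$.
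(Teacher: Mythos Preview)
Your proof is correct and takes essentially the same approach as the paper: the paper verifies the unit inequality $x\le\lres{\spp(x)}{1_Q}$ from \eqref{spp3} and the counit equality $\spp(\lres{a}{1_Q})=a$ from equivariance and \eqref{spp1}, which is exactly what your Galois-connection argument amounts to. The only cosmetic difference is that you phrase the adjunction via the biconditional $\spp(x)\le a\iff x\le\lres{a}{1_Q}$ rather than via unit and counit, but the ingredients and their use are identical.
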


\begin{proof}
The unit of the adjunction is
\[
x\le\lres {\spp(x)}{1_Q}
\]
and it follows immediately from the axiom $\lres{\spp(x)}{x}=x$ of supports. 
The counit is the condition 
\[
\spp(\lres{a}{1_Q})\leq a\;,
\]
which is proved using equivariance: 
\[
\spp(\lres{a}{1_Q}) =a\wedge \spp(1_Q) = a \wedge 1_{Q_0}  = a\;.
\]
Moreover, this also shows that the counit is an equality, so $\spp$ is left adjoint to $\lres{(.)}{1_Q}$ and the adjunction is a reflection. \qed
\end{proof}

\begin{corollary}\label{principal, cor: rq}
Let $Q$ be an equivariantly supported quantale. Then the map $Q_0\rightarrow \rs(Q)$ defined by $x\mapsto \lres{x}{1_Q}$ is an order isomorphism whose inverse is the map $\spp:\rs(Q) \rightarrow Q_0$. Hence, in particular, $\rs(Q)$ is a locale. 
\end{corollary}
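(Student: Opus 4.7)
The plan is to show that the two maps in question are mutually inverse, which then automatically yields the order isomorphism and transports the locale structure from $Q_0$ to $\rs(Q)$.

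First, I must check that both maps are well-typed. For $\spp$ restricted to $\rs(Q)$, this is immediate since $\spp$ lands in $Q_0$. For $a\mapsto\lres{a}{1_Q}$, I need that its image lies in $\rs(Q)$, which follows from Lemma~\ref{propspp}\eqref{propspp5}: $(\lres{a}{1_Q})1_Q=\lres{a}{(1_Q1_Q)}=\lres{a}{1_Q}$.

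Next, one composition is essentially free from Lemma~\ref{adjoint}: the counit of the reflection is an equality, giving $\spp(\lres{a}{1_Q})=a$ for every $a\in Q_0$. For the other composition, I would invoke Lemma~\ref{propspp}\eqref{propspp6}, which says $\lres{\spp(x1_Q)}{1_Q}=x1_Q$; applying this to $x\in\rs(Q)$, so that $x1_Q=x$, yields $\lres{\spp(x)}{1_Q}=x$. Hence the two maps are mutually inverse bijections.

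Since both maps are monotone (being sup-lattice homomorphisms, one as the left adjoint $\spp$ from Lemma~\ref{adjoint}, the other by definition), being mutually inverse bijections makes them an order isomorphism. Finally, $\rs(Q)$ inherits the locale structure of $Q_0$ along this order isomorphism, proving the last assertion.

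I do not expect any real obstacle here; the work was already done in Lemma~\ref{propspp} and Lemma~\ref{adjoint}, and the corollary is just the assembly of those two facts into a bijection statement.
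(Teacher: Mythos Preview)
Your proof is correct and follows essentially the same approach as the paper. The paper invokes Lemma~\ref{adjoint} for one composite and Lemma~\ref{propspp}\eqref{propspp7} for the other; you cite Lemma~\ref{propspp}\eqref{propspp6} instead of \eqref{propspp7}, but since \eqref{propspp7} is proved in the paper precisely by specializing \eqref{propspp6} to $x\in\rs(Q)$, the difference is cosmetic, and your added checks on well-typedness and monotonicity are harmless elaborations.
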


\begin{proof}
By Lemma~\ref{adjoint} we have $\spp(\lres{a}{1_Q}) = a$
for all $a\in Q_0$. And, due to Lemma~\ref{propspp}\eqref{propspp7}, we have $\lres{\spp(x)}{1_Q}=x$ for all $x\in \rs(Q)$. \qed
\end{proof}

\begin{lemma}\label{equi-stab}
Every equivariant support is stable.
\end{lemma}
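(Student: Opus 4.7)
My plan is to verify stability via the criterion in Lemma~\ref{stable}, namely to show that $\spp(xy)\le\spp(x)$ for all $x,y\in Q$. The main tool will be the adjunction of Lemma~\ref{adjoint}: since $\spp$ is left adjoint to $\lres{(\cdot)}{1_Q}$, the inequality $\spp(xy)\le\spp(x)$ is equivalent to
\[
xy \le \lres{\spp(x)}{1_Q}\;.
\]

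The unit of the adjunction gives us $x \le \lres{\spp(x)}{1_Q}$ already. Multiplying on the right by $y$ and then pulling the left action out of the product via axiom \eqref{AAquantale1}, I get
\[
xy \le \bigl(\lres{\spp(x)}{1_Q}\bigr) y = \lres{\spp(x)}{(1_Q y)} \le \lres{\spp(x)}{1_Q}\;,
\]
where the last step uses monotonicity of the left action together with $1_Q y \le 1_Q$. Applying the adjunction $\spp\dashv\lres{(\cdot)}{1_Q}$ to this inequality yields $\spp(xy)\le\spp(x)$, which is exactly stability.

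There is no real obstacle here: once one recognizes that equivariance gives a reflective adjunction (Lemma~\ref{adjoint}), everything reduces to a transposition across the adjunction plus the $A$-$A$-quantale axiom \eqref{AAquantale1}. The only subtlety worth noting is that the argument uses the bimodule compatibility of the multiplication rather than any property of the involution, so this proof works uniformly for any equivariantly supported based quantale. \qed
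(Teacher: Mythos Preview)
Your proof is correct. Both your argument and the paper's establish $\spp(xy)\le\spp(x)$ directly, and both pass through the identity $(\lres{a}{x})y=\lres{a}{(xy)}$ from \eqref{AAquantale1}. The difference is in how equivariance enters. The paper applies it \emph{directly}: starting from $xy=(\lres{\spp(x)}{x})y=\lres{\spp(x)}{(xy)}$, one computes
\[
\spp(xy)=\spp\bigl(\lres{\spp(x)}{(xy)}\bigr)=\spp(x)\wedge\spp(xy)\le\spp(x)\;,
\]
a one-line calculation that uses only the equivariance axiom and \eqref{AAquantale1}. You instead invoke Lemma~\ref{adjoint} (itself a consequence of equivariance) and transpose across the adjunction $\spp\dashv\lres{(\cdot)}{1_Q}$. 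Your route is slightly more conceptual and makes explicit that stability is really a statement about the adjunction unit being compatible with multiplication; the paper's route is shorter and self-contained, needing no prior lemma beyond the definitions. Neither approach uses the involution, as you correctly observe.
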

\begin{proof}
Let us assume that $\spp$ is an equivariant support and show that $\spp{(xy)}\leq \spp(x)$ for all $x,y\in Q$. Indeed,
\[
\spp(xy) =\spp((\lres{\spp(x)}{x})y) = \spp(\lres{\spp(x)}{(xy)})  = \spp(x)\wedge \spp(xy)  \leq  \spp(x)\;. \qed
\]
\end{proof}

\begin{remark}
We would like to remark that notion of equivariantly supported quantale generalizes the notion an Ehresmann semigroup  (see \cite{KL}*{section 2}) as follows: Let $Q$ be an equivariantly supported $A$-$A$-quantale, then by considering the actions of $A$ as a generalization of the multiplication of elements of $A$ by elements of $Q$, we can define mappings $\lambda=\spp:Q\to A$ and $\rho=\spp\circ(-)^*:Q\to A$ such that
\begin{itemize}
\item Due to Corollary~\ref{principal, cor: rq} $A$ can be identified with $R(Q)$, so for all $a\in R(Q)$, we have $\lambda(a)=a$ and $\rho(a)=a$ .
\item Clearly $a\lambda(a)=a$ and $\rho(a)a=a$ hold for any $a\in Q$ due to Definition~\ref{principal, def: nonunitalsupp}.
\item $\lambda(\lambda(a)b)=\lambda(ab)$ and $\rho(a\rho(b))=\rho(ab)$ hold for any $a,b\in Q$ due to Lemma~\ref{stable}.
\end{itemize} 
Finally, we note that the action of $R(Q)$ on $Q$ does not coincide with multiplication in $Q$, furthermore $R(Q)$ is not a commutative subsemigroup.
\end{remark}

\begin{remark}
If $Q$ is a unitally supported quantale, the notions of equivariance and stability coincide \cite{GSQS}*{Th.\ 2.20}, but for general supports this is not the case, as the stable support of Example~\ref{stablenotequi} shows.
\end{remark}

\begin{remark}
Note that if $Q$ is equivariantly supported then $\spp\bigl(\lres{(\cdot)} x\bigr):Q_0\to Q_0$ preserves arbitrary non-empty meets, for if $S\subset Q_0$ is non-empty then 
\[
\spp\bigl(\lres{\bigwedge S}{x}\bigr)=\bigwedge S\wedge \spp(x)=\bigwedge_{s\in S}(s\wedge \spp(x))=\bigwedge_{s\in S}\spp(\lres{s}{x})\;.
\]
\end{remark}

Lemma~\ref{adjoint} shows that having an equivariant support is equivalent to the statement that the mapping $\lres{(\cdot)}1$ have a left adjoint satisfying the axioms of a support, and thus being equivariantly supported can be regarded as a property rather than structure on a based quantale. Hence, we can see $\eqsppQu$ as being a subcategory of $\basedQu$, and we have the following fact.

\begin{theorem}
$\eqsppQu$ is a reflective subcategory of $\basedQu$.
\end{theorem}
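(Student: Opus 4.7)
The plan is to construct, for each $Q\in\basedQu$, a universal arrow $\eta_Q\colon Q\to Q^{\mathrm{eq}}$ into an equivariantly supported based quantale. The first step is to observe that the sup-lattice map $\lres{(\cdot)}{1_Q}\colon Q_0\to Q$ preserves arbitrary joins, being part of the $Q_0$-$Q_0$-bimodule structure, so it admits a left adjoint $\lambda_Q\colon Q\to Q_0$. By Lemma~\ref{adjoint}, $\lambda_Q$ is the only possible equivariant support on $Q$, so constructing the reflection amounts to universally forcing $\lambda_Q$ to satisfy the three support axioms; equivariance is then automatic by the same lemma.

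Second, form $Q^{\mathrm{eq}}$ as the universal based-quantale quotient of $Q$ on which axioms \eqref{spp1}--\eqref{spp3} hold for the induced left adjoint of $\lres{(\cdot)}{1}$. Because $\basedQu$ is cocomplete (it is the category of models of an infinitary algebraic theory comprising sup-lattice joins, the two $Q_0$-actions, the involution, and the quantale multiplication, together with the axioms of Definitions~\ref{principal, def: AAquantale} and~\ref{principal, def: involutive}), this quotient exists; concretely it may be exhibited as the colimit of all based-quantale quotients $q\colon Q\to\bar Q$ for which the induced left adjoint of $\lres{(\cdot)}{1_{\bar Q}}$ becomes a support in the sense of Definition~\ref{principal, def: nonunitalsupp}.

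Third, verify the universal property. Given any $R\in\eqsppQu$ and any morphism $f\colon Q\to R$ in $\basedQu$, the adjunction unit $x\le\lres{\lambda_Q(x)}{1_Q}$, combined with the characterization of $\spp_R$ as a left adjoint, yields the inequality $\spp_R\circ f_1\le f_0\circ\lambda_Q$. The support axioms on $R$ force $f$ to respect the relations defining the quotient, so $f$ factors uniquely through $\eta_Q$ as some $\bar f\colon Q^{\mathrm{eq}}\to R$, and a short check based on Lemma~\ref{adjoint} (using that both $\spp_{Q^{\mathrm{eq}}}$ and $\spp_R$ are uniquely determined as left adjoints) upgrades the inequality to the equality $\spp_R\circ\bar f_1=\bar f_0\circ\spp_{Q^{\mathrm{eq}}}$ required for $\bar f$ to be a morphism in $\eqsppQu$.

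The main obstacle I expect is giving an effective description of this universal quotient and verifying that on $Q^{\mathrm{eq}}$ the induced left adjoint really is an equivariant support rather than merely a sup-lattice homomorphism with some of the desired properties; in particular, the interaction of the involution axiom $(\lres{a}{\rres{x}{b}})^*=\lres{b}{\rres{x^*}{a}}$ with the bimodule actions and with the three support axioms must be tracked carefully, and one may need to iterate the quotient construction transfinitely. A cleaner but less constructive alternative would be to verify that the inclusion $\eqsppQu\hookrightarrow\basedQu$ creates all small limits and then to invoke the adjoint functor theorem for locally presentable categories.
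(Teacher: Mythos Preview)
Your first step contains an error: a join-preserving map between complete lattices has a \emph{right} adjoint, not a left one. For $\lambda_Q$ to exist as a left adjoint you would need $\lres{(\cdot)}{1_Q}\colon Q_0\to Q$ to preserve all meets, and nothing in the definition of a based quantale guarantees this; Lemma~\ref{adjoint} only derives meet-preservation \emph{after} an equivariant support is already present. So on a general object of $\basedQu$ there is no pre-existing candidate support to work with, and the quotient you describe cannot be formulated as ``forcing the support axioms on $\lambda_Q$''.

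There is a second gap in your third step. The claim that the left-adjoint characterization alone upgrades the inequality $\spp_R\circ\bar f_1\le\bar f_0\circ\spp_{Q^{\mathrm{eq}}}$ to an equality is exactly what fails in general: the example immediately following Lemma~\ref{fullsubcategoryeqb} exhibits a (non-strong) based-quantale endomorphism of an equivariantly supported quantale that does not commute with the support. Hence preservation of supports by $\bar f$ must come from the universal construction itself, not from an after-the-fact check based on uniqueness of left adjoints.

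The paper's proof sidesteps both problems by taking precisely the abstract route you list as a fallback. It regards based quantales and equivariantly supported quantales as models of two-sorted \emph{sup-algebra} theories (sup-lattices equipped with join-preserving multi-morphisms subject to equational laws), with the support $\spp\colon Q\to Q_0$ appearing as an additional operation in the larger signature rather than as something derived from the bimodule structure. Free such algebras exist by the standard generators-and-relations (nucleus) machinery for sup-lattices, so the forgetful functor has a left adjoint immediately and the induced $\bar f$ commutes with $\spp$ by construction. Your adjoint-functor-theorem alternative is essentially the same argument in different packaging.
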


\begin{proof}
This follows from ``universal sup-lattice algebra'' if we regard the theory of based quantales and of equivariantly supported quantales as being two-sorted theories of \emph{sup-algebras}, by which are meant sup-lattices equipped with sup-lattice multi-morphisms subject to equational laws. Free sup-algebras exist and can be presented by generators and relations by means of the same techniques used for locales, quantales and modules, such as nuclei for describing quotients. The adaptation of these techniques in order to ``freely'' generate equivariant supports for based quantales is straightforward. \qed
\end{proof}

However, iterating the construction of an equivariantly supported quantale from a based quantale does not stabilize in general because the adjunction of which the inclusion functor is a right adjoint is not necessarily a reflection (equivalently, the inclusion functor is not full). For the latter to hold additional conditions are necessary, as we now describe.

\begin{lemma}\label{condiionssp}
Let  $Q$ be an equivariantly supported quantale. Let $x\in Q$ and $a\in Q_0$, and suppose that the following two conditions hold:
\begin{enumerate}
\item $\lres{a}{x}=x$\;;
\item $\lres{a}{1_Q}\leq x1_Q$\;.
\end{enumerate} 
Then $a=\spp(x)$\;.
\end{lemma}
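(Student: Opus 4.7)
The plan is to prove the two inequalities $\spp(x)\le a$ and $a\le \spp(x)$ separately, using equivariance for both and stability (which follows from equivariance via Lemma~\ref{equi-stab}) for the second.

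First I would tackle $\spp(x)\le a$. Apply $\spp$ to the hypothesis $\lres{a}{x}=x$ and use equivariance to rewrite the left-hand side:
\[
\spp(x)=\spp(\lres{a}{x})=a\wedge\spp(x)\,,
\]
which immediately gives $\spp(x)\le a$. This step only uses hypothesis (1) and Definition~\ref{defequivariant}.

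Next I would tackle $a\le\spp(x)$. Apply the monotone map $\spp$ to hypothesis (2) to get $\spp(\lres{a}{1_Q})\le \spp(x1_Q)$. On the left, equivariance together with $\spp(1_Q)=1_{Q_0}$ (axiom~\eqref{spp1}) gives
\[
\spp(\lres{a}{1_Q})=a\wedge\spp(1_Q)=a\wedge 1_{Q_0}=a\,.
\]
On the right, Lemma~\ref{equi-stab} together with Lemma~\ref{stable}(2) gives $\spp(x1_Q)=\spp(x)$. Hence $a\le\spp(x)$, and combining with the first half yields $a=\spp(x)$.

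There is no real obstacle here: the argument is purely a two-line manipulation once one notices that (1) forces $\spp(x)$ to lie below $a$ by equivariance, and (2) forces $a$ to lie below $\spp(x)$ because applying $\spp$ to $\lres{a}{1_Q}$ recovers $a$ while applying $\spp$ to $x1_Q$ collapses to $\spp(x)$ by stability.
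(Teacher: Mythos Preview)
Your proof is correct and follows essentially the same route as the paper's: both halves use equivariance in the same way, and the bound $\spp(x1_Q)\le\spp(x)$ (you write it as an equality via Lemma~\ref{stable}(2), the paper uses only the inequality) is exactly the stability step derived from Lemma~\ref{equi-stab}.
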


\begin{proof}
By applying equivariance we conclude that
\[
a\wedge \spp(x)=\spp(\lres a x)=\spp(x)\;,
\]
so $\spp(x) \leq a$. For the converse inequality, again equivariance gives us 
\[
a=a\wedge 1_{Q_0}=a\wedge \spp(1_Q)=\spp(\lres{a}{1_Q})\leq \spp(x1_Q)\leq \spp(x)\;. \qed
\]  
\end{proof}

\begin{lemma}\label{fullsubcategoryeqb}
Let $Q$ be a supported quantale and $R$ an equivariantly supported quantale. Then any strong homomorphism of based quantales $f:Q\to R$ commutes with the supports.
\end{lemma}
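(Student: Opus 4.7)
The plan is to apply Lemma~\ref{condiionssp} in the equivariantly supported quantale $R$, taking the element $f_1(x)\in R$ and $a=f_0(\spp_Q(x))\in R_0$. If the two hypotheses of that lemma are satisfied, the conclusion gives $f_0(\spp_Q(x))=\spp_R(f_1(x))$, which is exactly the required commutation with supports.

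The first hypothesis, $\lres{f_0(\spp_Q(x))}{f_1(x)}=f_1(x)$, should be immediate from the fact that $(f_1,f_0)$ is a homomorphism of based quantales and therefore preserves the left action: indeed
\[
\lres{f_0(\spp_Q(x))}{f_1(x)}=f_1\bigl(\lres{\spp_Q(x)}{x}\bigr)=f_1(x)
\]
by axiom~\eqref{spp3} of supports in $Q$.

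For the second hypothesis, $\lres{f_0(\spp_Q(x))}{1_R}\le f_1(x)\,1_R$, I would first establish the analogous inequality in $Q$, namely $\lres{\spp_Q(x)}{1_Q}\le x\,1_Q$. This follows from axiom~\eqref{spp2}, which gives $\lres{\spp_Q(x)}{1_Q}\le xx^{*}1_Q$, combined with $x^{*}1_Q\le 1_Q$ and $1_Q 1_Q=1_Q$ from Lemma~\ref{propspp}\eqref{propspp5}. Then applying $f_1$ and using that $f_1$ is strong (so $f_1(1_Q)=1_R$), preserves the left action, and preserves multiplication yields
\[
\lres{f_0(\spp_Q(x))}{1_R}=f_1\bigl(\lres{\spp_Q(x)}{1_Q}\bigr)\le f_1(x\,1_Q)=f_1(x)\,1_R\;.
\]

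With both hypotheses verified, Lemma~\ref{condiionssp} applied inside $R$ delivers $f_0(\spp_Q(x))=\spp_R(f_1(x))$, completing the proof. The main point to keep in mind is that strength of $f_1$ is used in exactly one place, to convert $f_1(1_Q)$ into $1_R$ when transferring the inequality $\lres{\spp_Q(x)}{1_Q}\le x1_Q$ along $f$; without strength, the second hypothesis of Lemma~\ref{condiionssp} would not be available, which explains why the statement is restricted to strong homomorphisms.
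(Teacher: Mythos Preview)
Your proof is correct and follows essentially the same approach as the paper: both verify the two hypotheses of Lemma~\ref{condiionssp} for $a=f_0(\spp_Q(x))$ and the element $f_1(x)$ in $R$, using axiom~\eqref{spp3} for the first and axiom~\eqref{spp2} together with strength of $f$ for the second. The only cosmetic difference is that the paper establishes the inequality $\lres{f_0(\spp_Q(x))}{f_1(y)}\le f_1(x)1_R$ for arbitrary $y$ and then specializes to $y=1_Q$, whereas you first prove $\lres{\spp_Q(x)}{1_Q}\le x1_Q$ inside $Q$ and then push it forward via $f_1$; the content is the same.
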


\begin{proof}
Let $f:Q\rightarrow R$ be a homomorphism of based quantales. Then for all $x,y\in Q$ we have
\begin{align*}
\lres{f_0(\spp_Q(x))}{f_1(x)}&=f_1(\lres{\spp_Q(x)}{x})=f_1(x)\;,\\
\lres{f_0(\spp_Q(x))}{f_1(y)}&=f_1(\lres{\spp_Q(x)}{y})\leq f_1(xx^{*}y)\leq f_1(x)1_R\;.
\end{align*}
In particular, if $f$ is a strong homomorphism then
\[
\lres{f_0(\spp_Q(x))}{1_R}=\lres{f_0(\spp_Q(x))}{f_1(1_Q)}\leq f_1(x) 1_R\;,
\]
and thus, by Lemma~\ref{condiionssp}, we conclude that $f_0(\spp_Q(x))=\spp_R(f_1(x))$. \qed
\end{proof}

The hypothesis of strong homomorphisms in Lemma~\ref{fullsubcategoryeqb} is necessary, as the following example shows.

\begin{example}
Let $Q=\mathbf 2$ be the two element locale $\{0,1\}$ with $0<1$, regarded as  an equivariantly (and unitary) supported quantale with $Q_0=Q$ and support $\spp=\ident_Q:Q\to Q_0$. Let the zero endomorphism $\mathbf 0:Q\to Q$ be defined by $\mathbf 0_1(1)=0$ and $\mathbf 0_0=\ident$. This is a homomorphism of based quantales, but it does not commute with the supports:
\[
\spp(\mathbf 0_1(1))=\spp(0)= 0\neq 1 =\mathbf 0_0(1)=\mathbf 0_0(\spp(1))\;.
\]
\end{example}

So if we restrict to strong homomorphisms the universal construction of an equivariant support on a based quantale provides us with a well defined reflection which is stable under iteration:

\begin{corollary}
$\steqsppQu$ is a reflective full subcategory of $\stbasedQu$.
\end{corollary}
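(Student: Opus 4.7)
The plan is to bootstrap from the preceding theorem, which gives $\eqsppQu$ as a reflective subcategory of $\basedQu$, and to combine it with Lemma~\ref{fullsubcategoryeqb}. Let $\eta_Q : Q \to Q^{\mathrm{eq}}$ denote the reflection unit of $Q\in\basedQu$ into $\eqsppQu$.

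Fullness of the inclusion $\steqsppQu\hookrightarrow\stbasedQu$ drops out immediately from Lemma~\ref{fullsubcategoryeqb}: if $Q$ and $R$ are equivariantly supported and $f : Q \to R$ is any strong homomorphism of based quantales, then that lemma guarantees that $f$ commutes with the supports, so $f$ is already a morphism of $\sppQu$ and hence of $\steqsppQu$. For reflectivity the crux is to check that the unit $\eta_Q$ is itself a strong morphism of based quantales, i.e.\ that $\eta_{Q,1}(1_Q)=1_{Q^{\mathrm{eq}}}$. Unpacking the generators-and-relations presentation of $Q^{\mathrm{eq}}$ sketched in the proof of the preceding theorem, the only new data are formal elements $\spp(x)$ for $x\in Q$, subject in particular to $\spp(1_Q)=1_{Q_0}$ and $\lres{\spp(x)}{x}=x$; none of these axioms forces joins above the image of $1_Q$, and so $\eta_{Q,1}$ preserves the top. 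Once this is granted, the universal property of $\eta_Q$ in $\basedQu$ supplies, for any strong $f : Q \to R$ with $R \in \steqsppQu$, a unique based-quantale morphism $\bar f : Q^{\mathrm{eq}} \to R$ with $\bar f \circ \eta_Q = f$, and its strongness follows immediately from
\[
\bar f_1(1_{Q^{\mathrm{eq}}}) \;=\; \bar f_1(\eta_{Q,1}(1_Q)) \;=\; f_1(1_Q) \;=\; 1_R\;,
\]
yielding the desired reflection.

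The main obstacle is precisely the verification that $\eta_{Q,1}(1_Q) = 1_{Q^{\mathrm{eq}}}$, since the previous theorem describes the reflection only in outline via ``universal sup-lattice algebra''. The cleanest way to settle it is to present $Q^{\mathrm{eq}}$ as the quotient of the free based quantale on $Q$ together with fresh symbols $\spp(x)$ by the support axioms, and to argue by a nucleus-style construction analogous to the standard one for quantales that none of those relations identifies any element of $Q$ with something strictly above $1_Q$; equivalently, the nucleus can be chosen so that $\eta_{Q,1}(1_Q)$ is a fixed point and an upper bound of every equivalence class.
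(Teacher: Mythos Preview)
Your approach is correct and matches what the paper intends: the corollary is stated without proof, as an immediate consequence of the preceding reflectivity theorem together with Lemma~\ref{fullsubcategoryeqb}, and that is precisely the combination you use. You are in fact more scrupulous than the paper in isolating the one nontrivial point the paper passes over in silence, namely that the unit $\eta_Q:Q\to Q^{\mathrm{eq}}$ is strong; your sketch that the free addition of the support operation introduces, on the $Q$-sort, only restrictions $\lres{a'}{x}\le x$ of existing elements (so nothing above the image of $1_Q$) is the right idea and is at the same level of rigor as the paper's own ``universal sup-lattice algebra'' argument for the theorem.
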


At a first sight this appears to be a weaker property than that of unitary supported quantales \cite{Re07}*{Th.\ 3.10}, where homomorphisms are not required to be strong. However, we note that the latter property does not apply to arbitrary homomorphisms of involutive quantales, either, since it holds for unital quantales and unital homomorphisms.

We also note the following fact concerning unital quantales.

\begin{corollary}\label{cor:backtostabqu}
$\ueqsppQu$ is isomorphic to the category of stably (unitally) supported quantales $\ssq$ of \cite{Re07}.
\end{corollary}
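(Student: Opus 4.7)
The plan is to build explicit mutually inverse functors $F\colon\ssq\to\ueqsppQu$ and $G\colon\ueqsppQu\to\ssq$ directly from Example~\ref{exm:classicalspp} and Lemma~\ref{fromspptouspp}.

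First, I would define $F$ on objects by sending $(Q,\spp)\in\ssq$ to the equivariantly supported based quantale $(Q,\downsegment(e),\spp)$ with the $Q_0$-$Q_0$-bimodule structure of Example~\ref{exm:classicalspp} (i.e.\ $\lres a x=ax$ and $\rres x a=xa$ for $a\le e$), and on morphisms by the faithful functor $U\colon\usppQu\to\sppuQu$ discussed just before Lemma~\ref{fromspptouspp}. The support axioms (\ref{spp1})--(\ref{spp3}) reduce to the classical ones together with $\spp(1)=1\wedge e=e$ from section~\ref{unitalsupports}; the essential new content, equivariance, follows from classical stability since multiplication on $\downsegment(e)$ is meet:
\[
\spp(\lres a x)=\spp(ax)=\spp(a\spp(x))=a\spp(x)=a\wedge\spp(x)\;.
\]

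Next I would define $G$ by sending $(Q,Q_0,\spp)\in\ueqsppQu$ to $(Q,\spp_e)$ with $\spp_e(x):=\lres{\spp(x)}{e}$ as in Lemma~\ref{fromspptouspp}; that lemma already provides the classical unital-support axioms and the correct action on morphisms. Stability of $\spp_e$ then follows immediately from Lemma~\ref{equi-stab}, since equivariance yields $\spp(xy)\le\spp(x)$ and therefore $\spp_e(xy)=\lres{\spp(xy)}{e}\le\lres{\spp(x)}{e}=\spp_e(x)$.

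Finally I would verify that $F$ and $G$ are mutually inverse. The composite $G\circ F$ equals the identity on $\ssq$ because the left action in the image of $F$ is quantale multiplication, so $\lres{\spp(x)}{e}=\spp(x)e=\spp(x)$. The other composite $F\circ G$ replaces the given base locale $Q_0$ by $\downsegment(e)$, which is the main obstacle: to obtain a strict equality one must show that the canonical unital homomorphism $\kappa\colon Q_0\to\downsegment(e)$, $a\mapsto\lres a e$, of Lemma~\ref{lem:restrictionhom} is an isomorphism that transports all the structure. Applying Lemma~\ref{condiionssp} to $x=e$ and $a=1_{Q_0}$ (whose hypotheses reduce to $\lres{1_{Q_0}}{e}=e$ and $\lres{1_{Q_0}}{1_Q}\le 1_Q$, both trivially true) yields $\spp(e)=1_{Q_0}$, so $b\mapsto\spp(b)$ for $b\le e$ is a two-sided inverse to $\kappa$ by axiom~(\ref{spp3}) and equivariance. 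The bimodule actions then transport through $\kappa$ by the axioms (\ref{AAquantale1})--(\ref{AAquantale3}) (e.g.\ $\lres a x=\lres a{(ex)}=(\lres a e)x$), while the supports match by the very definition of $\spp_e$; with these identifications the functors are strict inverses, establishing the isomorphism of categories.
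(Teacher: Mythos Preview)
Your argument is correct and tracks the route the paper implicitly intends --- the corollary is stated without proof, as an immediate consequence of Lemma~\ref{fromspptouspp} (the reflection $\spp\mapsto\spp_e$), Lemma~\ref{equi-stab} (equivariant $\Rightarrow$ stable), and the remark after it that for unitally supported quantales equivariance and stability coincide. Your explicit functors $F$ and $G$ and the verification that $\kappa\colon Q_0\to\downsegment(e)$ is an isomorphism simply spell this out.

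One technical quibble: your final claim that $F$ and $G$ are \emph{strict} inverses overreaches. The composite $F\circ G$ always produces an object with base locale literally equal to $\downsegment(e)$, whereas an arbitrary object of $\ueqsppQu$ may carry a different (though, as you show, isomorphic via $\kappa$) base $Q_0$; ``identifying'' along $\kappa$ is precisely passing to a natural isomorphism, not an equality. What your argument actually establishes is $G\circ F=\ident$ strictly together with a natural isomorphism $(\ident_Q,\kappa)\colon \ident\Rightarrow F\circ G$, hence an equivalence of categories. This is almost certainly what the paper means by ``isomorphic'', and no further work is needed.
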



\section{Quantal frames}\label{principal, section: quantalframes}


\subsection{Supported quantal frames}

\begin{definition}\label{principal, def: basedquantalframe}
By a \emph{based quantal frame} is meant a based quantale $Q$ such that for all $x,y,y_i\in Q$ and $a\in Q_0$ the following properties hold:
\begin{align}
x \wedge \V_i y_i &= \V_i x\wedge y_i\;,\\
(\lres{a}{x})\wedge y &= \lres{a}{(x\wedge y)}\;,\label{qf1}\\
(\rres{x}{a})\wedge y &= \rres{(x\wedge y)}{a}\;. \label{qf2}
\end{align}
By a \emph{supported quantal frame}, \emph{stably suported quantal frame}, and \emph{equivariantly supported quantal frame}, is meant a based quantal frame equipped with a support, a stable support, and an equivariant support, respectively.
\end{definition}

\begin{example}\label{exm:ustabqu}
Any stably supported quantale $Q$ in the unital sense of \cite{Re07} satisfies \eqref{qf1}, since for $x\in Q$ and $a\in\downsegment(e)$ we have $a1\wedge x=ax$, and thus
\[
\lres a{(x\wedge y)}=a(x\wedge y)=a1\wedge x\wedge y=ax\wedge y=\bigl(\lres a x\bigr)\wedge y\;.
\]
Similarly, it satisfies \eqref{qf2}. Hence, any stable quantal frame in the sense of \cite{Re07} is a based quantal frame.
\end{example}

Recall from Corollary~\ref{principal, cor: rq} that for every equivariantly supported quantale $Q$ the sup-lattice of right sided elements $\rs(Q)$ is order-isomorphic to $Q_0$ and therefore it is a locale. In particular, any $x\in \rs(Q)$ has the following representation:
\begin{equation}\label{eq: representationrestriction}
\lres{\spp(x)}{1_Q}=x\;.
\end{equation} 

\begin{lemma}\label{principal, cor: meet}
Let $Q$ be an equivariantly supported quantal frame. Then, for all $x\in \rs(Q)$ and $y\in Q$, we have:
\begin{equation}
\spp(x\wedge y) = \spp(x)\wedge \spp(y)\;.
\end{equation}

\end{lemma}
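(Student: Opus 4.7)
The plan is to exploit the representation formula for right-sided elements furnished by Corollary~\ref{principal, cor: rq}, namely $x = \lres{\spp(x)}{1_Q}$ whenever $x \in \rs(Q)$, and then convert the meet $x \wedge y$ into a restriction of $y$ using axiom~\eqref{qf1} of a based quantal frame. Once this is done, the conclusion will follow immediately from a single application of equivariance.

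More concretely, first I would rewrite
\[
x \wedge y = (\lres{\spp(x)}{1_Q}) \wedge y,
\]
using \eqref{eq: representationrestriction}. Next, I would apply \eqref{qf1} with $a = \spp(x) \in Q_0$ and the ``$x$'' there replaced by $1_Q$ to obtain
\[
(\lres{\spp(x)}{1_Q}) \wedge y = \lres{\spp(x)}{(1_Q \wedge y)} = \lres{\spp(x)}{y},
\]
the last equality holding because $y \le 1_Q$. Finally, equivariance of $\spp$ (Definition~\ref{defequivariant}) applied to $a = \spp(x)$ and $y$ gives
\[
\spp(x \wedge y) = \spp(\lres{\spp(x)}{y}) = \spp(x) \wedge \spp(y),
\]
which is the desired identity.

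There is no real obstacle here: the only thing one needs to be careful about is that $\spp(x)$ genuinely lies in $Q_0$ (so that equivariance and \eqref{qf1} are applicable), which is immediate since $\spp: Q \to Q_0$, and that the right-sided hypothesis on $x$ is used precisely to trade $x$ for the restriction $\lres{\spp(x)}{1_Q}$. The proof is essentially a two-line rewriting exercise; the substantive content is packaged into Corollary~\ref{principal, cor: rq}, the frame axiom~\eqref{qf1}, and the equivariance axiom, all of which are available.
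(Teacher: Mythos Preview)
Your proof is correct and follows essentially the same route as the paper's: rewrite $x$ as $\lres{\spp(x)}{1_Q}$ via~\eqref{eq: representationrestriction}, apply~\eqref{qf1} to pull the restriction across the meet, simplify $1_Q\wedge y$ to $y$, and finish with equivariance. The paper's proof is the same four-line computation.
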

\begin{proof}
\begin{align*}
\spp(x\wedge y) & = \spp\bigl((\lres{\spp(x)}{1_Q})\wedge y\bigr)\\
& = \spp\bigl(\lres{\spp(x)}{(1_Q\wedge y)}\bigr)\\
& = \spp\bigl(\lres{\spp(x)}{y}\bigr)\\
& = \spp(x)\wedge \spp(y)\;. \qed
\end{align*}
\end{proof}

As a consequence of Corollary~\ref{cor:backtostabqu} and Example~\ref{exm:ustabqu}, if we add a multiplicative unit we recover the theory of stable quantal frames:

\begin{corollary}\label{stably1}
The class of unital equivariantly supported quantal frames coincides with the class of stable quantal frames of \cite{Re07}.
\end{corollary}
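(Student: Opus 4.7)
The proof is essentially an assembly of earlier results, so I would present it as a two-step correspondence rather than as a fresh computation.

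First, I would handle the easy inclusion. Let $Q$ be a stable quantal frame in the sense of \cite{Re07}. By Example~\ref{exm:ustabqu}, with the $Q_0$-$Q_0$-action obtained by change of base along $\downsegment(e)\hookrightarrow Q$, the axioms \eqref{qf1} and \eqref{qf2} of a based quantal frame are automatic since $\lres{a}{x}=ax=a1\wedge x$. Combined with the fact that $Q$ is already a locale, this gives a based quantal frame. Moreover, Corollary~\ref{cor:backtostabqu} already identifies stably unitally supported quantales with the objects of $\ueqsppQu$, so the support of $Q$ is equivariant once transported to the non-unital formulation. Hence $Q$ is a unital equivariantly supported quantal frame.

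For the converse, let $Q$ be a unital equivariantly supported quantal frame. Being a quantal frame means $Q$ is a locale. By Lemma~\ref{equi-stab} the equivariant support is stable, and Corollary~\ref{cor:backtostabqu} then identifies $(Q,\spp)$ with an object of the category $\ssq$ of stably unitally supported quantales of \cite{Re07}, where the unital support is recovered as $\spp_e(x)=\lres{\spp(x)}e$ via Lemma~\ref{fromspptouspp}. Since $Q$ is simultaneously a locale and a stably unitally supported quantale, it is by definition a stable quantal frame in the sense of \cite{Re07}.

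The only point that could conceivably be an obstacle is verifying that the base locale $Q_0$ coming with the equivariantly supported structure really is canonically $\downsegment(e)$ in the unital case, so that the two frame-axiom formulations match; but this identification is exactly the content of Corollary~\ref{cor:backtostabqu} together with Corollary~\ref{principal, cor: rq} (which in the unital case forces $Q_0\cong\rs(Q)\cong\downsegment(e)$), so no new verification is needed. Combining the two directions yields the claimed coincidence of classes. \qed
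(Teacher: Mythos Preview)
Your proof is correct and follows essentially the same approach as the paper: the paper's argument is simply the one-line observation that the result is ``a consequence of Corollary~\ref{cor:backtostabqu} and Example~\ref{exm:ustabqu}'', and you have unpacked precisely how these two results combine, together with the auxiliary Lemmas~\ref{fromspptouspp} and~\ref{equi-stab} and Corollary~\ref{principal, cor: rq} that underlie them. Your final paragraph on the identification $Q_0\cong\downsegment(e)$ is a welcome clarification that the paper leaves implicit.
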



\subsection{Principal quantales}

We adopt the following terminology which is motivated by the relation to principal bundles on groupoids (\cf\ section \ref{subsec: principal groupoids}).

\begin{definition}\label{principal, def: effective}
By a \emph{principal quantale} $Q$ will be meant an equivariantly supported quantal frame $Q$ satisfying the equivalent conditions of the following Theorem~\ref{thm:effective}.
\end{definition}

\begin{theorem}\label{thm:effective}
Let $Q$ be an equivariantly supported quantal frame. The following conditions are equivalent:
\begin{enumerate}
\item\label{thm:effective1} $Q\cong \rs(Q)\otimes_{\ts(Q)} \ls(Q)$ in $\Frm$.
\item\label{thm:effective2} For each locale $S$ and locale homomorphisms $h: \rs(Q)\to S$ and $k:\ls(Q)\to S$ such that $\rres{h}{\ts(Q)}=\rres{k}{\ts(Q)}$, there is a unique locale homomorphism $t:Q\to S$ such that $\rres{t}{\rs(Q)}=h$ and $\rres{t}{\ls(Q)}=k$.
\item\label{thm:effective3} The triple $(Q,\lres{(.)}{1_Q},\rres{1_Q}{(.)})$ is the cokernel pair in $\Frm$ of the inclusion
\[i: E \to A\;,\]
where $E=\{ a\in A\mid \lres{a}{1_Q}=\rres{1_Q}{a}  \}$.
\end{enumerate}
\end{theorem}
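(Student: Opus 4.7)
The plan is to recognise that all three conditions are three different phrasings of the same pushout in $\Frm$, so the substantive work lies in a translation lemma identifying the inclusions $\ts(Q)\hookrightarrow\rs(Q)$ and $\ts(Q)\hookrightarrow\ls(Q)$ with $E\hookrightarrow A$ via the canonical frame isomorphisms $A\cong\rs(Q)$ and $A\cong\ls(Q)$. Once that identification is in place, all three equivalences follow formally from the universal property of the pushout.

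I would first establish the translation. By Corollary~\ref{principal, cor: rq} the map $\lres{(\cdot)}{1_Q}:A\to\rs(Q)$ is an order isomorphism, and in fact a frame isomorphism because it preserves finite meets: applying the quantal-frame axiom \eqref{qf1} together with the $A$-module identity $\lres{(a\wedge b)}{m}=\lres{a}{\lres{b}{m}}$ (valid since $A$ is a locale) gives $(\lres{a}{1_Q})\wedge(\lres{b}{1_Q})=\lres{a}{\lres{b}{1_Q}}=\lres{(a\wedge b)}{1_Q}$. Applying involution --- which preserves joins, fixes $1_Q$, and satisfies $(\lres{a}{\rres{x}{b}})^*=\lres{b}{\rres{x^*}{a}}$ by Definition~\ref{principal, def: involutive} --- the map $\rres{1_Q}{(\cdot)}:A\to\ls(Q)$ is likewise a frame isomorphism, and moreover $(\lres{a}{1_Q})^*=\rres{1_Q}{a}$. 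Under these two isomorphisms the subframe $\ts(Q)=\rs(Q)\cap\ls(Q)$ corresponds on each side to $E\subseteq A$: for $a\in A$, $\lres{a}{1_Q}\in\ts(Q)$ iff it is also left-sided, and since two-sided elements of the strongly Gelfand quantale $Q$ are self-adjoint (as noted after Lemma~\ref{propspp}) this happens exactly when $\lres{a}{1_Q}=(\lres{a}{1_Q})^*=\rres{1_Q}{a}$, i.e., $a\in E$; conversely, for $a\in E$ the element $\lres{a}{1_Q}=\rres{1_Q}{a}$ is manifestly two-sided.

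With the translation in hand the equivalences drop out. (1) $\Leftrightarrow$ (2) is the defining universal property of the tensor product / pushout in $\Frm$: $\rs(Q)\otimes_{\ts(Q)}\ls(Q)$ is characterised as the codomain of a universal pair of frame homomorphisms from $\rs(Q)$ and $\ls(Q)$ agreeing on $\ts(Q)$, which is exactly what (2) asserts of $Q$ equipped with the inclusions $\rs(Q)\hookrightarrow Q$ and $\ls(Q)\hookrightarrow Q$. For (2) $\Leftrightarrow$ (3) I transport along the translation: a cokernel pair of $i:E\to A$ in $\Frm$ consists of two frame homomorphisms $A\rightrightarrows P$ coinciding on $E$ and universal with this property, and under $A\cong\rs(Q)$, $A\cong\ls(Q)$ together with $E\leftrightarrow\ts(Q)$ this becomes precisely the universal property of (2); the two maps $\lres{(\cdot)}{1_Q}$ and $\rres{1_Q}{(\cdot)}$ play the role of the coprojections and agree on $E$ by the very definition of $E$.

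The only step carrying genuine content is the identification $\ts(Q)\leftrightarrow E$, which rests on the self-adjointness of two-sided elements in strongly Gelfand quantales together with the involution axiom; everything else is formal bookkeeping around the universal property of a pushout in $\Frm$.
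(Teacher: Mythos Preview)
Your proposal is correct and follows essentially the same approach as the paper: both arguments hinge on the frame isomorphisms $A\cong\rs(Q)$ and $A\cong\ls(Q)$, the identification of $\ts(Q)$ with $E$ via self-adjointness of two-sided elements, and the universal property of the pushout in $\Frm$. The only difference is presentational: you package the identification $E\leftrightarrow\ts(Q)$ as a single translation lemma and then invoke transport of pushouts along isomorphisms of spans, whereas the paper unwinds this transport explicitly in each direction, defining $h=h'\circ\spp$, $k=k'\circ\spp((\cdot)^*)$ for $(2)\Rightarrow(3)$ and $h'=h(\lres{(\cdot)}{1_Q})$, $k'=k(\rres{1_Q}{(\cdot)})$ for $(3)\Rightarrow(2)$ and verifying the required equalities by hand.
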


\begin{proof}
The equivalence $\eqref{thm:effective1} \Leftrightarrow \eqref{thm:effective2}$ follows from the universal property of the pushout in $\Frm$, taking into account that $\rs(Q)\otimes_{\ts(Q)} \ls(Q)$ is the pushout of the square
\[
\xymatrix{
\ts(Q)\ar@{^{(}->}[r] \ar@{^{(}->}[d]  &\ls(Q)\ar@{^{(}->}[d]\\
\rs(Q)\ar@{^{(}->}[r]& \rs(Q)\otimes_{\ts(Q)} \ls(Q)
}\;.
\]
In order to prove $\eqref{thm:effective2}\Rightarrow \eqref{thm:effective3}$ let $S$ be any frame and let $h':A\to S$ and $k':A\to S$ be frame homomorphisms such that $h'(b)=k'(b)$ for any $b\in E$, as the following diagram indicates:
\[
\xymatrix{
Q && A  \ar@<0.5ex>[d]^{k'}\ar@<-0.5ex>[d]_{h'}   \ar@<0.5ex>[ll]^{\rres{1_Q}{(.)}}\ar@<-0.5ex>[ll]_{\lres{(.)}{1_Q}} & E \ar@{_{(}->}[l]_i \\
&& S 
}
\]
We need to show that there is a unique frame homomorphism $t:Q\to S$ such that $t\circ \lres{(.)}{1_Q}=h'$ and $t\circ \rres{1_Q}{(.)}=k'$. Since $(Q,\spp)$ is an equivariantly supported $A$-$A$-quantale, we know that $\rs(Q)\cong A$ and $\ls(Q)\cong A$ (\cf\  Corollary~\ref{principal, cor: rq}). This implies that both $\rs(Q)$ and $\ls(Q)$ are locales and so is $\ts(Q)$. Therefore, the maps $h=h'\circ \spp(.)$ and $k=k'\circ \spp((.)^*)$ are frame homomorphisms (\cf\ Lemma~\ref{principal, cor: meet}) from $\rs(Q)$ and $\ls(Q)$ to $S$, respectively. Moreover, due to \eqref{eq: representationrestriction} we have $\spp(b)\in E$ for all $b\in \ts(Q)$, since the two-sided elements of $Q$ are self-adjoint:
\begin{equation*}
\lres{\spp(b)}{1_Q}=b=\rres{1_Q}{\spp(b^*)}=\rres{1_Q}{\spp(b)}\;.
\end{equation*}
Hence, we obtain
$h(b)=h'(\spp(b))=k'(\spp(b))=k'(\spp(b^*))=k(b)$,
showing that $\rres{h}{\ts(Q)}=\rres{k}{\ts(Q)}$,
so by hypothesis there exists a unique  frame homomorphism $t:Q\to S$ such that $h=\rres{t}{\rs(Q)}$ and $k=\rres{t}{\ls(Q)}$. Then for all $a\in A$ we obtain
\[ t(\lres{a}{1_Q})=h(\lres{a}{1_Q})=h'(\spp(\lres{a}{1_Q}))=h'(a)\;,    \]
and, similarly, $t(\rres{1_Q}{a})=k'(a)$. This proves \eqref{thm:effective3}. Finally, let us prove $\eqref{thm:effective3}\Rightarrow \eqref{thm:effective2}$. Let $S$ be any frame and let $h:\rs(Q)\to S$ and $k:\ls(Q)\to S$ be two frame homomorphisms such that $\rres{h}{\ts(Q)}=\rres{k}{\ts(Q)}$. Consider the frame homomorphisms $h'=h(\lres{(.)}{1_Q})$ and $k'=k(\rres{1_Q}{(.)})$ from $A$ to $S$. For any $b\in E$ we have
\[   h'(b)=h(\lres{b}{1_Q})=h(\rres{1_Q}{b})=k(\rres{1_Q}{b})=k'(b)\;,   \] 
and by hypothesis the triple $(Q,\lres{(.)}{1_Q},\rres{1_Q}{(.)})$ is the cokernel pair of the frame inclusion $i: E \to A$. Then, there is a unique frame homomorphism $t:Q\to S$ such that $t(\lres{a}{1_Q})=h'(a)$ and $t(\rres{1_Q}{a})=k'(a)$. All we need to prove now is that $\rres{t}{\rs(Q)}=h$ and $\rres{t}{\ls(Q)}=k$. Let $a\in \rs(Q)$. Then
\[ t(a)=t(\lres{\spp(a)}{1_Q})=h'(\spp(a))=h(\lres{\spp(a)}{1_Q})=h(a)\;,  \]
and the other equality is proved similarly. This proves \eqref{thm:effective2}. \qed
\end{proof}

\begin{remark}
Let $(Q,\spp)$ be an equivariantly supported $A$-$A$-quantal frame. The frame homomorphism $f:\rs(Q)\otimes_{\ts(Q)} \ls(Q)\to Q$ given by $f(\V_i r_i\otimes l_i)=\V_i r_i\wedge l_i$ is injective if and only if $Q$ is a principal quantale. It will be clear from the results towards the end of the paper that not every equivariantly supported quantal frame is principal, but it is interesting to point out that the restriction of $f$ to the basis of pure tensors is always injective (and thus the basis is not downwards closed, due to \cite{Re07}*{Prop.\ 2.2}). Indeed, let us suppose that $r\wedge l = r'\wedge l'$. Then
\begin{align*}
r\otimes l & = (r\wedge 1_Qr)\otimes(l\wedge l1_Q)\\
	& = (r\wedge l1_Q)\otimes(l\wedge 1_Qr)\\
	& = (r1_Q\wedge l1_Q)\otimes(1_Qr\wedge 1_Ql)\\
	& = ((\lres{\spp(r)}{1_Q})\wedge l1_Q)\otimes(1_Qr \wedge (\rres{1_Q}{\spp(l^*)}))\\
	& = (\lres{\spp(r)}{(1_Q\wedge l1_Q)})\otimes(\rres{(1_Qr\wedge 1_Q)}{\spp(l^*)})\\
	& = (\lres{\spp(r)}{l1_Q})\otimes(\rres{1_Qr}{\spp(l^*)})\\
	& = (\lres{\spp(r)}{l})1_Q\otimes 1_Q(\rres{r}{\spp(l^*)})\\
	& = (\lres{\spp(r)}{(1_Q\wedge l)})1_Q\otimes1_Q(\rres{(r\wedge 1_Q)}{\spp(l^*)})\\
	& = ((\lres{\spp(r)}{1_Q})\wedge l)1_Q\otimes1_Q((r\wedge (\rres{1_Q}{\spp(l^*)}))& \text{[by \eqref{qf1}\ and\ \eqref{qf2}]}\\
	& = (r1_Q\wedge l)1_Q\otimes 1_Q(r\wedge 1_Ql)\\
	& = (r\wedge l)1_Q\otimes 1_Q(r\wedge l)\\
	& = (r'\wedge l')1_Q\otimes 1_Q(r'\wedge l')\\
        & = r'\otimes l'\;.
\end{align*}
\end{remark}

\begin{corollary}
Let $Q$ be a principal quantale. Then any element $q\in Q$ can be written as
\begin{equation}
q=\V\{ r\wedge l\mid r\in \rs(Q),\, l\in \ls(Q),\, r\wedge l\leq q \}\;.
\end{equation}
\end{corollary}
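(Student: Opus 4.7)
The plan is to lean directly on the definition of principal quantale via Theorem~\ref{thm:effective}\eqref{thm:effective1}, which states that the frame homomorphism
\[
f:\rs(Q)\otimes_{\ts(Q)}\ls(Q)\longrightarrow Q,\qquad f\bigl(\V_i r_i\otimes l_i\bigr)=\V_i r_i\wedge l_i,
\]
discussed in the remark immediately preceding the corollary, is an isomorphism of frames. Since every element of the pushout $\rs(Q)\otimes_{\ts(Q)}\ls(Q)$ is a join of pure tensors $r\otimes l$ with $r\in\rs(Q)$ and $l\in\ls(Q)$, applying $f$ yields a presentation of any $q\in Q$ as $q=\V_i r_i\wedge l_i$ for suitable families $(r_i)\subset\rs(Q)$ and $(l_i)\subset\ls(Q)$.

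From there the argument is purely order-theoretic. Set
\[
X=\{r\wedge l\mid r\in\rs(Q),\ l\in\ls(Q),\ r\wedge l\le q\}.
\]
The inequality $\V X\le q$ is immediate from the definition of $X$. For the reverse inequality, observe that each index $i$ in the representation $q=\V_i r_i\wedge l_i$ satisfies $r_i\wedge l_i\le q$, so $r_i\wedge l_i\in X$; hence $q=\V_i r_i\wedge l_i\le \V X$.

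There is essentially no obstacle: the content of the statement is entirely packaged in the principality hypothesis, which provides the generating family of pure-tensor meets, and in the trivial fact that all generators occurring in a concrete representation of $q$ are automatically below $q$. The only point to flag is the identification of the isomorphism $Q\cong\rs(Q)\otimes_{\ts(Q)}\ls(Q)$ with the map $f$ sending $r\otimes l$ to $r\wedge l$, which follows from the universal property in Theorem~\ref{thm:effective}\eqref{thm:effective2} applied to the inclusions $\rs(Q)\hookrightarrow Q$ and $\ls(Q)\hookrightarrow Q$ (whose restrictions to $\ts(Q)$ agree tautologically).
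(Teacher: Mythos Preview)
Your proof is correct and is exactly the argument the paper has in mind: the corollary is stated without proof immediately after the remark identifying the pushout isomorphism with the map $f(r\otimes l)=r\wedge l$, and your reasoning (surjectivity of $f$ gives a representation $q=\V_i r_i\wedge l_i$, hence $q\le\V X\le q$) is the evident unpacking of that. Your final paragraph correctly pins down why the abstract isomorphism in Theorem~\ref{thm:effective}\eqref{thm:effective1} coincides with $f$.
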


\begin{example}
Let $X$ be a set. The set of binary relations $\Rel(X)=\wp(X\times X)$ is a unital involutive quantale. The multiplication is the composition in the forward direction, $RS = S\circ R$, and the involution is reversal. Then $\Rel(X)$ is a principal quantale, because it is an equivariantly supported  $\wp(X)$-$\wp(X)$-quantale frame with
\[ \lres{U}{R}=(U\times X)\cap R\;,\quad \rres{R}{U}=(X\times U)\cap R\;,  \]
and equivariant support given by
\[ \spp(R)=\{(x,x)\mid (x,y)\in R\ \text{for some $y\in X$}\}\;. \]
Here we have that $\rs(\Rel(X))=\ls(\Rel(X))\cong \wp(X)$\;, and $\ts(\Rel(X))=\{ \emptyset, X\}$\;. Hence,
\[  \Rel(X)\cong \wp(X)\otimes \wp(X)\;.  \]
\end{example}


\subsection{Reflexive quantal frames}

\begin{definition}\label{principal, def: reflexive}
By a \emph{reflexive quantal frame} $(Q,\upsilon)$ will be meant an $A$-$A$-quantal frame equipped with a frame homomorphism $\upsilon: Q\to A$ that for all $a\in A$ satisfies
\begin{equation}
\upsilon(\lres{a}{1_Q})=a=\upsilon(\rres{1_Q}{a})\;.
\end{equation}
\end{definition}

\begin{lemma}\label{upsilon}
Let $(Q,\spp,\upsilon)$ be a stably (not necessarily equivariantly) supported $A$-$A$-quantale which is also a reflexive quantal frame. Then 
\begin{equation}
\upsilon(a1_Q)=\spp(a)
\end{equation}
for all $a\in Q.$
\end{lemma}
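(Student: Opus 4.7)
The plan is to chain together three facts already established: the identity $\lres{\spp(x1_Q)}{1_Q}=x1_Q$ from Lemma~\ref{propspp}\eqref{propspp6}, the characterization of stability $\spp(x1_Q)=\spp(x)$ from Lemma~\ref{stable}, and the defining axiom of a reflexive quantal frame, $\upsilon(\lres{b}{1_Q})=b$ for $b\in A$.

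First I would apply Lemma~\ref{propspp}\eqref{propspp6} to the element $a\in Q$ to rewrite $a1_Q$ as $\lres{\spp(a1_Q)}{1_Q}$. Since $\spp$ is assumed stable, Lemma~\ref{stable} gives $\spp(a1_Q)=\spp(a)$, so this becomes $a1_Q=\lres{\spp(a)}{1_Q}$. Now apply the frame homomorphism $\upsilon$ to both sides; by the reflexivity axiom of Definition~\ref{principal, def: reflexive}, $\upsilon(\lres{\spp(a)}{1_Q})=\spp(a)$, yielding $\upsilon(a1_Q)=\spp(a)$ as required.

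I do not expect any obstacle here: the statement is essentially a bookkeeping observation that ties the independently given maps $\spp$ and $\upsilon$ together on the right-sided elements of $Q$, and all three ingredients have been proved in the preceding subsections. The only subtlety to keep in mind is that the lemma explicitly does not require $\spp$ to be equivariant, so one must use the stable-support characterization from Lemma~\ref{stable} rather than the equivariant adjunction of Lemma~\ref{adjoint}; otherwise the argument would be circular with respect to the more general setting the lemma is aimed at.
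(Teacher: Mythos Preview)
Your proof is correct and follows essentially the same three-step chain as the paper's own argument: rewrite $a1_Q$ as $\lres{\spp(a1_Q)}{1_Q}$ via Lemma~\ref{propspp}\eqref{propspp6}, replace $\spp(a1_Q)$ by $\spp(a)$ using stability (Lemma~\ref{stable}), and then apply the reflexivity axiom $\upsilon(\lres{b}{1_Q})=b$. Your closing remark about avoiding equivariance is apt and matches the spirit of the lemma's hypothesis.
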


\begin{proof}
Let $a\in Q$. Then
\begin{align*}
\upsilon(a1_Q) & = \upsilon(\lres{\spp(a1_Q)}{1_Q})& \text{[by Lemma~\ref{propspp}\eqref{propspp6}]}\\
		       & = \upsilon(\lres{\spp(a)}{1_Q})& \text{($\spp$ is stable)}\\
    		       & = \spp(a)\;.\qed& \text{($\upsilon$ is reflexive)}		
\end{align*}
\end{proof}

\begin{lemma}\label{upsilon1}
Let $(Q,\spp,\upsilon)$ be an equivariantly supported $A$-$A$-quantale which is also a reflexive quantal frame. Then, for all $a\in Q$ and $b\in A$, we have 
\begin{equation}
\upsilon(\lres{b}{(a1_Q)})=b\wedge \upsilon(a1_Q)\;.
\end{equation}

\end{lemma}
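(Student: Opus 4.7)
The plan is to reduce the equation to a calculation involving $\spp$ on right-sided elements, where we can use equivariance together with the reflexivity axiom. The key observation is that $a1_Q$ and the restricted element $\lres{b}{(a1_Q)}$ both lie in $\rs(Q)$, so Corollary~\ref{principal, cor: rq} applies and gives a canonical expression of each as $\lres{(\cdot)}{1_Q}$ of its support.

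First I would check that $a1_Q\in\rs(Q)$, which follows from Lemma~\ref{propspp}\eqref{propspp5} via $(a1_Q)1_Q=a(1_Q 1_Q)=a1_Q$. Then, using axiom \eqref{AAquantale1}, the element $\lres{b}{(a1_Q)}$ is also right-sided, because $\bigl(\lres{b}{(a1_Q)}\bigr)1_Q=\lres{b}{((a1_Q)1_Q)}=\lres{b}{(a1_Q)}$. Hence Corollary~\ref{principal, cor: rq} gives
\[
\lres{b}{(a1_Q)}\ =\ \lres{\spp(\lres{b}{(a1_Q)})}{1_Q}\;,
\]
and applying the reflexivity axiom of $\upsilon$ to the right-hand side yields
\[
\upsilon\bigl(\lres{b}{(a1_Q)}\bigr)\ =\ \spp\bigl(\lres{b}{(a1_Q)}\bigr)\;.
\]

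At this point equivariance of $\spp$ gives $\spp(\lres{b}{(a1_Q)})=b\wedge\spp(a1_Q)$, and Lemma~\ref{upsilon} (which applies since equivariant supports are stable by Lemma~\ref{equi-stab}) identifies $\spp(a1_Q)=\spp(a)=\upsilon(a1_Q)$. Chaining the three identities produces the desired equality $\upsilon(\lres{b}{(a1_Q)})=b\wedge\upsilon(a1_Q)$. There is no serious obstacle here: the whole argument is a short composition of previously established facts, and the only thing one has to be careful about is verifying that the restriction stays within $\rs(Q)$ so that Corollary~\ref{principal, cor: rq} is applicable.
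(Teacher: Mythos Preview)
Your proof is correct and uses the same core ingredients as the paper's (reflexivity, equivariance, and Lemma~\ref{upsilon}). The paper's argument is slightly more direct: it first rewrites $\lres{b}{(a1_Q)}=(\lres{b}{a})1_Q$ via axiom~\eqref{AAquantale1}, applies Lemma~\ref{upsilon} to obtain $\upsilon\bigl((\lres{b}{a})1_Q\bigr)=\spp(\lres{b}{a})$, then uses equivariance and Lemma~\ref{upsilon} once more to finish. Your route---observing that $\upsilon=\spp$ on $\rs(Q)$ via Corollary~\ref{principal, cor: rq} and the reflexivity axiom---is equally valid and in fact isolates a useful general principle, at the cost of an extra appeal to stability (Lemma~\ref{equi-stab}) to reduce $\spp(a1_Q)$ to $\spp(a)$, a step which the paper's decomposition sidesteps.
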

\begin{proof}
Let $a\in Q$ and $b\in A$. Then
\begin{align*}
\upsilon(\lres{b}{(a1_Q)}) & = \upsilon((\lres{b}{a})1_Q)\\
 & = \spp(\lres{b}{a})& \text{(by Lemma~\ref{upsilon})}\\
 & = b \wedge \spp(a)& \text{($\spp$ is equivariant)}\\
& = b \wedge \upsilon(a1_Q)\;. \qed
\end{align*}
\end{proof}

Let $(Q,\spp,\upsilon)$ be an equivariantly supported reflexive $A$-$A$-quantal frame. Let us define locales $G_0$ and $G_1$ as follows:
\[
\opens(G_0)=A\;,\quad  \opens(G_1)=Q\;.
\] 

\begin{lemma}\label{open}
$\spp:Q\to A$ is the direct image $d_!$ of an open map $d:G_1\to G_0$.
\end{lemma}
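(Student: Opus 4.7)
The plan is to identify the frame homomorphism underlying $d$ as the right adjoint of $\spp$ and then check Frobenius reciprocity. Concretely, I would set
\[
d^*:A\to Q,\quad a\mapsto \lres a{1_Q}
\]
and argue that $d^*$ is a frame homomorphism; this gives a continuous map $d:G_1\to G_0$ of locales with $d^*$ as its inverse image. By Lemma~\ref{adjoint}, $\spp$ is left adjoint to $d^*$ with the counit an equality, so $\spp=d_!$ and $d$ is semiopen. It then remains to verify the Frobenius reciprocity condition $d_!(x\wedge d^*(a))=d_!(x)\wedge a$.

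For the first step, I would check that $d^*$ preserves arbitrary joins (immediate from the bimodule axioms, since the left action is a sup-lattice morphism in each variable), preserves the top (the action is unital, so $\lres{1_A}{1_Q}=1_Q$), and preserves binary meets. The last point is the only mildly non-trivial one: using the bimodule identity $\lres{(a\wedge b)}{m}=\lres a{\lres b m}$ (note that multiplication in the locale $A$ is meet) together with the based quantal frame axiom \eqref{qf1}, one gets
\[
\lres{(a\wedge b)}{1_Q}=\lres a{\lres b{1_Q}}=\lres a{(1_Q\wedge \lres b{1_Q})}=(\lres a{1_Q})\wedge(\lres b{1_Q})\;.
\]

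For Frobenius, I would combine axiom~\eqref{qf1} with equivariance of $\spp$: for $x\in Q$ and $a\in A$,
\[
d_!(x\wedge d^*(a))=\spp\bigl(x\wedge \lres a{1_Q}\bigr)=\spp\bigl(\lres a{(x\wedge 1_Q)}\bigr)=\spp(\lres a x)=a\wedge\spp(x)=d_!(x)\wedge a\;.
\]
This shows $d$ is open and $\spp=d_!$, as required. The only step where one has to be slightly careful is the verification that $d^*$ preserves binary meets, but this is an immediate consequence of the quantal frame axiom~\eqref{qf1}, so I do not anticipate any real obstacle.
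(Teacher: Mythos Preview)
Your proof is correct and follows essentially the same route as the paper: define $d^*(a)=\lres a{1_Q}$, invoke Lemma~\ref{adjoint} to obtain $\spp\dashv d^*$ (hence $\spp=d_!$ and $d$ is semiopen), and verify Frobenius via axiom~\eqref{qf1} together with equivariance of $\spp$. The only cosmetic difference is that you check meet-preservation of $d^*$ explicitly through \eqref{qf1}, whereas the paper gets it for free from $d^*$ being a right adjoint; both arguments are valid.
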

\begin{proof}
Recall from Lemma~\ref{adjoint} that the map $\lres{(.)}{1_Q}:A\to Q$ is the right adjoint of $\spp$ and therefore it is a frame homomorphism that defines a map of locales $d:G_1\to G_0$ which is semiopen with $\spp=d_!$ because $d^*$ is the right adjoint $\spp_*$ of $\spp$:
\begin{align*}
\spp(d^*(a)) &= \spp(\lres{a}{1_Q}) = a& \text{for all $a\in A$ (because $\spp$ is equivariant)\;, }\\
d^*(\spp(q)) &= \lres{\spp(q)}{1_Q}\geq q& \text{for all $q\in Q$\;.}
 \end{align*}
Now, let us check that $d$ satisfies the Frobenius reciprocity condition in order to show that $d$ is open. Let $q\in Q$ and $a\in A$:
\begin{align*}
d_!(d^*(b)\wedge a) &= d_!((\lres{b}{1_Q})\wedge a)\\
& =  d_!(\lres{b}{(1_Q\wedge a)})& \text{[by \eqref{qf1}]} \\
& = d_!(\lres{b}{a}) \\
& = \spp(\lres{b}{a}) = b\wedge \spp(a)& \text{(because $\spp$ is equivariant)}\\
& = b\wedge d_!(a)\;. \qed
\end{align*} 
\end{proof}
We can now define a locale map $i:G_1\to G_1$ by the condition $i^*(q)=q^*$ because the involution of $Q$ is a frame isomorphism. Moreover, we have $i\circ i=id_Q$ and $i_!=i^*$. Let us define an open map 
\[  r:G_1\to G_0   \]
by putting $r=d\circ i$\;.

\begin{lemma}
Let $(Q,\spp,\upsilon)$ be an equivariantly supported reflexive $A$-$A$-quantal frame. The tensor product $Q\otimes_{A} Q$ coincides with the pushout of the homomorphisms $d^*$ and $r^*$ in $\Frm$.
\end{lemma}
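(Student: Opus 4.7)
The plan is to show that both constructions, the bimodule tensor product $Q\otimes_A Q$ in $\SL$ (which is automatically a frame because $Q$ is) and the pushout of $d^*$ and $r^*$ in $\Frm$, arise as quotients of the same underlying frame coproduct $Q\oplus Q$ by the same congruence. The frame coproduct $Q\oplus Q$ coincides with the sup-lattice tensor product $Q\otimes Q$, and in it binary meets of pure tensors are computed componentwise: $(x_1\otimes y_1)\wedge(x_2\otimes y_2)=(x_1\wedge x_2)\otimes(y_1\wedge y_2)$.

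First I would rewrite the two actions of $A$ on $Q$ in terms of $d^*$ and $r^*$. By construction $d^*(a)=\lres{a}{1_Q}$, and using the axiom $(\lres{a}{\rres{x}{b}})^*=\lres{b}{\rres{x^*}{a}}$ from Definition~\ref{principal, def: involutive} with $b=1_A$ and $x=1_Q$, one gets
\[
r^*(a)=i^*(d^*(a))=(\lres{a}{1_Q})^*=\rres{1_Q}{a}\;.
\]
Then, from the quantal-frame axioms \eqref{qf1} and \eqref{qf2}, specialized by taking $y=1_Q$ in the obvious way, I get the pointwise formulas
\[
\lres{a}{y}=d^*(a)\wedge y\;,\quad \rres{x}{a}=r^*(a)\wedge x\;,
\]
valid for all $x,y\in Q$ and $a\in A$.

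Next I would compare the defining relations of the two quotients. The $A$-$A$-bimodule tensor product $Q\otimes_A Q$ is the quotient of $Q\otimes Q$ by the relations $\rres{x}{a}\otimes y=x\otimes\lres{a}{y}$, which by the above formulas become
\[
(r^*(a)\wedge x)\otimes y=x\otimes(d^*(a)\wedge y)\;.
\]
The pushout of $d^*$ and $r^*$ in $\Frm$ is the quotient of $Q\oplus Q$ by the relations $r^*(a)\otimes 1_Q=1_Q\otimes d^*(a)$ for $a\in A$. Meeting both sides of the pushout relation with $x\otimes y$ and using componentwise meets gives the bimodule relation, so the pushout relations are contained in the bimodule congruence; specializing $x=y=1_Q$ in the bimodule relation returns the pushout relation, so the two congruences agree. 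Therefore $Q\otimes_A Q$ and the pushout coincide as frames.

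The only subtle point is the book-keeping about which side of the tensor corresponds to which action and hence to which of $d^*,r^*$; this is where the involutive identity used to compute $r^*(a)=\rres{1_Q}{a}$ is essential, and where one must be careful that the bimodule congruence really is generated by the pushout relations (rather than strictly larger). Since both are congruences of frames and each contains the generators of the other, they are equal, and no further calculation is needed.
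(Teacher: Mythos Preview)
Your proposal is correct and follows essentially the same route as the paper. Both arguments boil down to the identities $d^*(a)\wedge y=\lres{a}{y}$ and $r^*(a)\wedge x=\rres{x}{a}$, obtained from \eqref{qf1}--\eqref{qf2} and the involution axiom, which make the defining relations $(x\wedge r^*(a))\otimes y = x\otimes(d^*(a)\wedge y)$ of the frame pushout literally coincide with the bimodule relations $(\rres{x}{a})\otimes y = x\otimes(\lres{a}{y})$; the paper simply states this equivalence directly rather than passing through the generator form $r^*(a)\otimes 1_Q = 1_Q\otimes d^*(a)$.

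One small slip to fix: in the sentence ``Meeting both sides of the pushout relation with $x\otimes y$ \ldots\ gives the bimodule relation, so the pushout relations are contained in the bimodule congruence'', the conclusion is reversed --- what you have shown there is that the \emph{bimodule} relations lie in the \emph{pushout} congruence (since frame congruences are closed under meets). The specialization $x=y=1_Q$ then gives the other inclusion. Once you swap that clause the argument reads correctly.
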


\begin{proof}
To prove that
\[ \xymatrix{
A \ar^{d^*}[rr]\ar_{r^*}[d] && Q \ar^{\pi^*_2}[d] \\ 
Q \ar_{\pi^*_1}[rr]     && Q\otimes_{A} Q } \]
is a pushout in $\Frm$, we shall show that for all $b,c\in Q$ and $a\in A$ the equality
\[
(b\wedge r^*(a))\otimes c = b\otimes (d^*(a)\wedge c)
\]
is equivalent to 
\[
b\otimes (\lres{a}{c})=(\rres{b}{a})\otimes c\;,
\] 
\end{proof}
which is a consequence of the following two derivations: 
\begin{align*}
d^*(a)\wedge c &= (\lres{a}{1_Q})\wedge c = \lres{a}{c}\;;\\
b\wedge r^*(a) &= b\wedge (\lres{a}{1_Q})^* \\
& = b\wedge (\rres{1_Q}{a}) = \rres{b}{a}\;. \qed
\end{align*}

Our candidate for the inclusion of units $u:G_0\to G_1$ will be $u^*(a)=\upsilon(a)$, for all $a\in Q$, as we shall see now.

\begin{lemma}\label{units}
Let $(Q,\upsilon)$ be a reflexive quantal frame. Then the map $u$ defined by $u^*=\upsilon$ satisfies $d\circ u=\ident_{G_0}$ and $r\circ u=\ident_{G_0}$.
\end{lemma}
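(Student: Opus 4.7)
The plan is to translate each equation of locale maps into the corresponding equation of frame homomorphisms and then observe that both identities are nothing more than a restatement of the reflexivity axiom. Concretely, $d\circ u=\ident_{G_0}$ is equivalent to $u^{*}\circ d^{*}=\ident_{A}$ on $\opens(G_0)=A$, and similarly for $r\circ u=\ident_{G_0}$. So the goal reduces to computing $u^{*}(d^{*}(a))$ and $u^{*}(r^{*}(a))$ for an arbitrary $a\in A$ and checking that both equal $a$.

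First I would collect the three inverse image homomorphisms. By definition of $u$, we have $u^{*}=\upsilon$. From the proof of Lemma~\ref{open} the map $d^{*}$ is the right adjoint of $\spp$, which by Lemma~\ref{adjoint} is exactly the sup-lattice homomorphism $a\mapsto \lres{a}{1_Q}$; so $d^{*}(a)=\lres{a}{1_Q}$. Finally, since $r=d\circ i$ and $i^{*}(x)=x^{*}$, we get $r^{*}(a)=i^{*}(d^{*}(a))=(\lres{a}{1_Q})^{*}$, which by the involutive $A$-$A$-quantale axiom $(\lres{a}{\rres{x}{b}})^{*}=\lres{b}{\rres{x^{*}}{a}}$ applied with $x=1_Q$ and using $1_Q^{*}=1_Q$ equals $\rres{1_Q}{a}$.

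With these formulas at hand, the two equalities follow at once from Definition~\ref{principal, def: reflexive}: for every $a\in A$,
\[
u^{*}(d^{*}(a))=\upsilon(\lres{a}{1_Q})=a\quad\text{and}\quad u^{*}(r^{*}(a))=\upsilon(\rres{1_Q}{a})=a\;.
\]
Dualising back to $\Loc$ yields $d\circ u=\ident_{G_0}$ and $r\circ u=\ident_{G_0}$, as required.

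There is really no obstacle here; the lemma is essentially a bookkeeping verification, designed so that the definition of a reflexive quantal frame becomes exactly the source/target axioms for an internal graph. The only point requiring a moment of care is identifying $r^{*}$ via the involution, but this is an immediate consequence of the axioms of an involutive $A$-$A$-quantale.
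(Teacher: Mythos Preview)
Your proof is correct and follows essentially the same route as the paper's: both compute $u^{*}\circ d^{*}$ and $u^{*}\circ r^{*}$ on an arbitrary $a\in A$ using $d^{*}(a)=\lres{a}{1_Q}$ and $r^{*}(a)=(\lres{a}{1_Q})^{*}=\rres{1_Q}{a}$, and then invoke the reflexivity axiom. The only difference is that you spell out the involutive $A$-$A$-quantale identity justifying $(\lres{a}{1_Q})^{*}=\rres{1_Q}{a}$, which the paper leaves implicit.
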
 

\begin{proof}
The frame homomorphism $\upsilon:Q\to A$ defines a map of locales $u:G_0\to G_1$ given by $u^*=\upsilon$. For all $a\in A$ we have
\[
u^*(d^*(a))=\upsilon(\lres{a}{1_Q})=a\;,
\]
so $d\circ u=\ident_{G_0}$.
Similarly, 
\[
u^*(r^*(a))=\upsilon((\lres{a}{1_Q})^*)=\upsilon(\rres{1_Q}{a})=a\;,
\]
so $r\circ u=\ident_{G_0}$. \qed
\end{proof}

\begin{corollary}
Let $(Q,\spp,\upsilon)$ be an equivariantly supported reflexive quantal frame, and let $G$ be its associated involutive localic graph:
\[
\xymatrix{
G=& G_1\ar@(ru,lu)[]_i\ar@<1.2ex>[rr]^r\ar@<-1.2ex>[rr]_d&&G_0\,.\ar[ll]|u
}
\]
Then $G$ is an involutive reflexive open graph.
\end{corollary}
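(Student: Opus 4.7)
The plan is to assemble the corollary from the preceding lemmas, since all the structural ingredients have essentially been constructed already; what remains is to check that they fit the axioms of an involutive reflexive open graph.

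First I would recall the data: $\opens(G_0)=A$, $\opens(G_1)=Q$, with $d^*=\lres{(\cdot)}{1_Q}$, $i^*=(\cdot)^*$, $r=d\circ i$, and $u^*=\upsilon$. By Lemma~\ref{open}, $d$ is an open map of locales with $d_!=\spp$. Since the involution of $Q$ is a frame isomorphism satisfying $q^{**}=q$, the map $i$ is a locale isomorphism (in particular open) and $i\circ i=\ident_{G_1}$. Consequently $r=d\circ i$ is open as a composition of open maps.

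Next I would verify the graph compatibilities. The identity $d\circ i=r$ holds by definition of $r$, and $r\circ i=d\circ i\circ i=d$ follows from $i\circ i=\ident_{G_1}$. For the reflexive structure, Lemma~\ref{units} supplies $d\circ u=\ident_{G_0}$ and $r\circ u=\ident_{G_0}$, which are exactly the two unit laws required of an involutive reflexive graph.

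Putting these observations together one obtains that the diagram
\[
\xymatrix{
G_1\ar@(ru,lu)[]_i\ar@<1.2ex>[rr]^r\ar@<-1.2ex>[rr]_d&&G_0\ar[ll]|u
}
\]
is an involutive graph (the maps $d,r$ intertwine with the involution $i$ in the required way), it is reflexive (via $u$ satisfying $d\circ u=r\circ u=\ident_{G_0}$), and it is open (both $d$ and $r$ being open maps). There is no real obstacle here; the corollary is essentially a restatement of the cumulative content of Lemmas~\ref{adjoint}, \ref{open}, and \ref{units}, together with the trivial observation that $i$ is its own inverse and that open maps are closed under composition.
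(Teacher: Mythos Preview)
Your proposal is correct and matches the paper's approach: the corollary is stated there without proof, as it is meant to follow immediately from Lemma~\ref{open} (openness of $d$), the definition of $i$ and $r=d\circ i$ given just before Lemma~\ref{units}, and Lemma~\ref{units} itself (the reflexivity conditions $d\circ u=r\circ u=\ident_{G_0}$). Your write-up simply makes this assembly explicit, which is exactly what is intended.
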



\subsection{Multiplicative quantal frames}

\begin{lemma}
Let $Q$ be an $A$-$A$-quantal frame. The quantale multiplication has the following factorization in the category of sup-lattices:
\[
\xymatrix{
 Q\otimes Q \ar@{->>}[d]_{\pi}  \ar[dr]^{\mu} \\
 Q\otimes_{A} Q  \ar[r]_-{\mu_A} & Q
}
\]
\end{lemma}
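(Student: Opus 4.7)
The plan is to obtain the factorization from the universal property of the tensor product of $A$-$A$-bimodules in the category $\SL$, so the whole argument reduces to checking a single equalization condition that is built into the axioms of an $A$-$A$-quantale.

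First I would recall that the tensor product $Q\otimes_A Q$ is the coequalizer in $\SL$ of the two sup-lattice maps
\[
Q\otimes A\otimes Q \rightrightarrows Q\otimes Q
\]
sending $x\otimes a\otimes y$ to $(\rres{x}{a})\otimes y$ and $x\otimes(\lres{a}{y})$, respectively, and that $\pi:Q\otimes Q\twoheadrightarrow Q\otimes_A Q$ is the universal sup-lattice homomorphism coequalizing them. A sup-lattice map $\mu:Q\otimes Q\to Q$ therefore factors (uniquely) through $\pi$ if and only if it satisfies $\mu((\rres{x}{a})\otimes y)=\mu(x\otimes(\lres{a}{y}))$ for all $x,y\in Q$ and $a\in A$.

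Next I would observe that in our case $\mu$ is the sup-lattice homomorphism induced by the quantale multiplication, that is $\mu(x\otimes y)=xy$. Hence the required equalization is exactly
\[
(\rres{x}{a})y = x(\lres{a}{y})\,,
\]
which is precisely axiom \eqref{AAquantale2} of Definition~\ref{principal, def: AAquantale}. Consequently $\mu$ coequalizes the two parallel maps above, and the universal property of $\pi$ yields a unique sup-lattice homomorphism $\mu_A:Q\otimes_A Q\to Q$ with $\mu_A\circ\pi=\mu$.

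There is essentially no obstacle in this proof; the content of the statement is merely the observation that axiom \eqref{AAquantale2} is equivalent to the well-definedness of the quantale multiplication as a sup-lattice homomorphism on the relative tensor product, as was already remarked immediately after Definition~\ref{principal, def: AAquantale}. The only thing one might wish to add, for completeness, is a note that the remaining axioms \eqref{AAquantale1} and \eqref{AAquantale3} express that $\mu_A$ is in fact a homomorphism of $A$-$A$-bimodules, a fact that will be useful later but is not needed here.
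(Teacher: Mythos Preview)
Your proof is correct and coincides with the paper's own argument: both verify that the multiplication $\mu$ is $A$-balanced (the paper calls it ``middle-linear''), which is exactly axiom~\eqref{AAquantale2}, and then invoke the universal property of $Q\otimes_A Q$. The only difference is cosmetic---you phrase the universal property in terms of a coequalizer diagram, while the paper simply says ``the factorization now follows from the definition of the tensor product.''
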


\begin{proof}
By definition $\mu$ preserves joins in each variable, and even more it is middle-linear because
\[ \mu(a\otimes \lres{c}{b})=a(\lres{c}{b})=(\rres{a}{c})b=\mu(\rres{a}{c}\otimes b)  \]
for all $a,b\in Q$ and $c\in A$. The factorization now follows from the definition of the tensor product. \qed
\end{proof}

We adopt similarly terminology to that of \cite{Re07}:

\begin{definition}
The homomorphism $\mu_A:Q\otimes_{A} Q\to Q$ in the above factorization will be referred to as the \emph{reduced multiplication} of $Q$.
By a \emph{multiplicative quantal frame} is meant an $A$-$A$-quantal frame such that the right adjoint of the reduced multiplication preserves joins.
\end{definition}

\begin{example}\label{iqfmultiplicativity}
Every inverse quantal frame $Q$ is multiplicative because $(\mu_A)_*$ is given by:
\begin{equation*}
(\mu_A)_*(q)=\V_{u\in \ipi(Q)} u\otimes u^*q
\end{equation*}
for all $q\in Q$, so it clearly preserves joins (\cf\ \cite{GSQS}*{Lemma 3.15}).
\end{example}

\begin{theorem}\label{principal, theo: semicategory}
Let $(Q,\spp)$ be a multiplicative equivariantly supported $A$-$A$-quantal frame. The localic graph 
\[
\xymatrix{
G=G_2\ar[r]^-{m}& G_1\ar@(ru,lu)[]_i\ar@<0.5ex>[rr]^r\ar@<-0.5ex>[rr]_d&&G_0\;,
}
\]
where $m$ is defined by
\begin{equation*}
m^*(a)=(\mu_A)_*(a)= \V_{xy\leq a} x\otimes y\,,
\end{equation*} 
is an involutive open semicategory.
\end{theorem}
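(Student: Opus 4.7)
The plan is to establish, in turn, four groups of properties: (a) that $m$ is a well-defined map of locales; (b) that $m$ is open; (c) the semicategory axioms, namely $d\circ m=d\circ\pi_1$, $r\circ m=r\circ\pi_2$, and associativity of $m$; and (d) the compatibility of the involution $i$ with $m$, $d$, $r$ so that $G$ is involutive.

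Point (a) is immediate from the hypotheses: multiplicativity says that $(\mu_A)_*$ preserves joins, and as the right adjoint of the sup-lattice homomorphism $\mu_A$ it also preserves arbitrary meets; hence $m^*=(\mu_A)_*$ is a frame homomorphism and defines a locale map $m:G_2\to G_1$, with direct image $m_!=\mu_A$. For point (c), the identities $d\circ m=d\circ\pi_1$ and $r\circ m=r\circ\pi_2$ reduce, after passing through the adjunctions, to identifying $(\mu_A)_*(\lres a{1_Q})$ with the pure tensor $(\lres a{1_Q})\otimes 1_Q$ in $Q\otimes_A Q$, and symmetrically for $r$; I would verify this by exploiting the $A$-balancing relation $(\rres x a)\otimes y=x\otimes(\lres a y)$ together with equivariance of $\spp$ to collapse the sup $\V_{xy\le\lres a{1_Q}}x\otimes y$ to a pure tensor. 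Associativity of $m$ translates, through left adjoints, into associativity of $\mu_A$, which holds because the quantale product is associative and $\mu_A$ is the quotient of the quantale multiplication by the $A$-balancing. For point (d), the relations $d\circ i=r$, $r\circ i=d$, and $i^2=\ident$ follow from the definition $r=d\circ i$ and from $(-)^{**}=\ident$, while the compatibility $i\circ m=m\circ\sigma\circ(i\times i)$ (with $\sigma$ the twist on $G_2$) translates to $(xy)^*=y^*x^*$ at the level of pure tensors.

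The principal obstacle is (b), the openness of $m$, \ie, the Frobenius reciprocity condition
\[
\mu_A\bigl(\xi\wedge m^*(y)\bigr)=\mu_A(\xi)\wedge y\qquad(\xi\in Q\otimes_A Q,\ y\in Q).
\]
Both sides preserve joins in $\xi$, so it suffices to verify the equality on pure tensors $\xi=x\otimes x'$. Using the frame identity $(x\otimes x')\wedge(p\otimes q)=(x\wedge p)\otimes(x'\wedge q)$ in $Q\otimes_A Q$ together with the expansion $m^*(y)=\V_{pq\le y}p\otimes q$, the task reduces to
\[
\V_{pq\le y}(x\wedge p)(x'\wedge q)=xx'\wedge y.
\]
The inequality ``$\le$'' is immediate from $(x\wedge p)(x'\wedge q)\le pq\le y$ and $(x\wedge p)(x'\wedge q)\le xx'$. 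For the converse, I would attempt to exhibit a concrete decomposition $(p_0,q_0)$ with $p_0q_0\le y$ realising $xx'\wedge y$ as $(x\wedge p_0)(x'\wedge q_0)$, using the quantal-frame axioms \eqref{qf1}--\eqref{qf2}, equivariance of $\spp$, and Lemma~\ref{propspp}\eqref{propspp6} to split the factor $y$ against one of $x,x'$ via its support. This is where I expect the bulk of the technical work to concentrate.
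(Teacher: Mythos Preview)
Your plan misidentifies what ``open'' means here. In the phrase ``involutive open semicategory'' the adjective refers to the domain map $d$ being open, which was already established in Lemma~\ref{open}; the paper never claims nor proves that $m$ is open. Its proof refers associativity and the involution axiom to \cite{Re07}*{Th.~4.8} and \cite{PR12}*{Lemma~2.16}, and then verifies $d\circ m=d\circ\pi_1$ (and dually $r\circ m=r\circ\pi_2$) directly via inverse images, collapsing $\V_{xy\le\lres z{1_Q}}x\otimes y$ to $(\lres z{1_Q})\otimes 1_Q$ by means of the balancing relation and Lemma~\ref{propspp}\eqref{propspp2}, namely $x\otimes y=(\rres x{\spp(y)})\otimes y\le xyy^*\otimes y\le xy1_Q\otimes 1_Q$.

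Your ``principal obstacle'' (b) is therefore not required, and the Frobenius identity you aim for is in fact \emph{false} at this level of generality. Take the paper's own example $Q=\{0,a,1\}$ with $a^2=1$, $a^*=a$ and $A=\{0_A,1_A\}$ acting trivially: every nonzero product in $Q$ equals $1$, so $m^*(a)=\V_{pq\le a}\,p\otimes q=0$; hence with $\xi=a\otimes a$ and $y=a$ one gets $\mu_A\bigl(\xi\wedge m^*(y)\bigr)=\mu_A(0)=0$, whereas $\mu_A(\xi)\wedge y=1\wedge a=a$. So $m$ is only semiopen here, and the ``concrete decomposition'' you hope to exhibit with the support axioms cannot exist. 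Your items (a), (c), (d) are broadly on track, but associativity should be argued through the \emph{inverse} image homomorphisms $m^*$, $(m\times\ident)^*$, $(\ident\times m)^*$, computed from the explicit formula for $m^*$, rather than ``through left adjoints'': identifying $(m\times\ident)_!$ with $\mu_A\otimes_A\ident$ and then invoking associativity of $\mu_A$ would require you to first establish that these pullback maps are semiopen with the expected direct images, an extra layer of work you can avoid entirely by staying on the $m^*$ side.
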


\begin{proof}
The proof of the associativity of $m$ is completely analogous to the proof of associativity in \cite{Re07}*{Th.\ 4.8}. The proof that $i$ is an involution for $m$  is the same as the one given in \cite{PR12}*{Lemma 2.16}. Therefore, the only thing left to prove is that the following diagrams are commutative:

\[ \xymatrix{
G_2 \ar^{\pi_1}[rr]\ar_{m}[d] && G_1 \ar^{d}[d] \\ 
G_1 \ar_{d}[rr]     && G_0 }\quad \xymatrix{
G_2 \ar^{\pi_2}[rr]\ar_{m}[d] && G_1 \ar^{r}[d] \\ 
G_1 \ar_{r}[rr]     && G_0\,. }
\]
In order to verify the equation $d\circ m=d\circ \pi_1$ let us use the inverse image homomorphisms. For each $z\in A$ we see that $\pi^*_1(d^*(z))=(\lres{z}{1_Q})\otimes 1_Q\leq m^*(d^*(z))$ by taking $x=\lres{z}{1_Q}$ and $y=1_Q$ in 
\[
m^*(d^*(z))=\V_{xy\leq \lres{z}{1_Q} } x\otimes y\;.
\]
For the converse inequality we have
\begin{align*}
m^*(d^*(z)) & =\V_{xy\leq \lres{z}{1_Q} } x\otimes y \\
 & =\V_{xy\leq \lres{z}{1_Q} } x\otimes (\lres{\spp(y)}{y}) \\
& =\V_{xy\leq \lres{z}{1_Q} } (\rres{x}{\spp(y)})\otimes y\\
& \leq \V_{xy\leq \lres{z}{1_Q} } xyy^*\otimes y\\
& \leq \V_{xy\leq \lres{z}{1_Q} } xy1_Q\otimes 1_Q\\
& \leq (\V_{xy\leq \lres{z}{1_Q} } xy)1_Q\otimes 1_Q\\
& \leq (\lres{z}{1_Q})1_Q\otimes 1_Q \\
& = \lres{z}{1_Q}\otimes 1_Q = \pi^*_1(d^*(z))\;. 
\end{align*} 
The condition $r\circ m=r\circ \pi_2$ is proved similarly. \qed
\end{proof}


\section{Open groupoids}\label{principal, section: opengroupoids}

\subsection{Unit laws}

\begin{definition}\label{principal, def: unitlaws}
Let $Q$ be a multiplicative equivariantly supported reflexive quantal frame $(Q,\spp,\upsilon)$. We say that $Q$ satisfies \emph{unit laws} if  for all $a\in Q$ the following condition holds:
\begin{equation}\label{unitlaws}
\V_{xy\leq a} (\lres{\upsilon(x)}{y}) = a\,.
\end{equation}
\end{definition}

\begin{lemma}\label{opencategory} 
Let $Q$ be a multiplicative equivariantly supported reflexive quantal frame that satisfies unit laws, and let $G$ be its associated involutive localic graph:
\[
\xymatrix{
G=G_2\ar[r]^-{m}& G_1\ar@(ru,lu)[]_i\ar@<1.2ex>[rr]^r\ar@<-1.2ex>[rr]_d&&G_0\,.\ar[ll]|u
}
\]
Then $G$ is an open involutive category.
\end{lemma}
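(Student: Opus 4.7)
The plan is to bootstrap from the structure already assembled. Theorem~\ref{principal, theo: semicategory} equips $G$ with an open involutive semicategory structure, and the Corollary following Lemma~\ref{units} gives it the structure of an involutive reflexive open graph. Combining these, the data $d,r,i,u,m$ of an open involutive category are already at hand, together with all the semicategory and involution axioms and the section identities $d\circ u=r\circ u=\ident_{G_0}$. What remains is to verify the two categorical unit laws $m\circ u_L=\ident_{G_1}=m\circ u_R$, where $u_L=\langle u\circ d,\ident\rangle$ and $u_R=\langle\ident,u\circ r\rangle$ are the induced maps $G_1\to G_2$, together with the unit--involution compatibility $i\circ u=u$.

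The first step is to translate everything to frames. Using $d^*=\lres{(\cdot)}{1_Q}$, $r^*=\rres{1_Q}{(\cdot)}$ and $u^*=\upsilon$, the universal property of the pushout $Q\otimes_A Q=\opens(G_2)$ and the axioms \eqref{qf1}--\eqref{qf2} produce
\[
u_L^{*}(a\otimes b)=\lres{\upsilon(a)}{b}\,,\qquad u_R^{*}(a\otimes b)=\rres{a}{\upsilon(b)}\,.
\]
The left unit law then falls out at once from \eqref{unitlaws}: for any $c\in Q$,
\[
u_L^{*}(m^{*}(c))=\V_{xy\leq c}\lres{\upsilon(x)}{y}=c\,.
\]
For the right unit law my approach is to apply \eqref{unitlaws} to $c^{*}$ and take involutions of both sides, using $(\lres{b}{y})^{*}=\rres{y^{*}}{b}$ from Definition~\ref{principal, def: involutive} and relabeling $u=y^{*}$, $v=x^{*}$, to arrive at the dual identity $\V_{uv\leq c}\rres{u}{\upsilon(v^{*})}=c$.

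The step I expect to be the main obstacle is bridging the discrepancy between the desired $u_R^{*}(m^{*}(c))=\V_{uv\leq c}\rres{u}{\upsilon(v)}$ and the involuted axiom $\V_{uv\leq c}\rres{u}{\upsilon(v^{*})}=c$, since the involution produces $\upsilon(v^{*})$ in place of $\upsilon(v)$. The natural way to close this gap is to establish that $\upsilon\circ(\cdot)^{*}=\upsilon$: this holds trivially on right- and left-sided elements by reflexivity (because $(\lres{a}{1_Q})^{*}=\rres{1_Q}{a}$), and since $\upsilon$ is a frame homomorphism one derives $\upsilon(\lres{b}{y})=\upsilon(\rres{y}{b})=b\wedge\upsilon(y)$. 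Applying $\upsilon$ to \eqref{unitlaws} therefore writes $\upsilon(q)=\V_{xy\leq q}\upsilon(x)\wedge\upsilon(y)$, and I would match this against the analogous expansion of $\upsilon(q^{*})$ together with the involution axiom of Definition~\ref{principal, def: involutive} in order to force $\upsilon(q^{*})=\upsilon(q)$. Once that identity is in hand, the compatibility $i\circ u=u$ is immediate from $(i\circ u)^{*}=u^{*}\circ i^{*}=\upsilon\circ(\cdot)^{*}=u^{*}$, the right unit law follows from the involuted axiom, and openness of $u$ is automatic because $u$ splits the open map $d$ of Lemma~\ref{open}; the remaining openness requirements are inherited from Theorem~\ref{principal, theo: semicategory}.
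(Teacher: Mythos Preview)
Your overall strategy coincides with the paper's: invoke Theorem~\ref{principal, theo: semicategory} for the semicategory structure, verify the left unit law directly from axiom~\eqref{unitlaws}, and then derive the right unit law from the left one via the involution. The paper carries out exactly this derivation, writing $m\circ\langle\ident,u\circ r\rangle=i\circ\bigl(m\circ\langle u\circ d,\ident\rangle\bigr)\circ i$ using $d\circ i=r$, $i\circ i=\ident$, and $i\circ u=u$. You go further than the paper in one respect: you notice that $i\circ u=u$ (equivalently $\upsilon\circ(\cdot)^{*}=\upsilon$) is not obviously available and must be argued for, whereas the paper simply lists it among the ``involution laws'' it invokes.

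The gap is in your sketched argument for $\upsilon(q^{*})=\upsilon(q)$. Applying $\upsilon$ to \eqref{unitlaws} does give $\upsilon(q)=\V_{xy\leq q}\upsilon(x)\wedge\upsilon(y)$, but applying the same to $q^{*}$ and reindexing along the bijection $(x,y)\leftrightarrow(y^{*},x^{*})$ between $\{xy\le q^{*}\}$ and $\{xy\le q\}$ yields $\upsilon(q^{*})=\V_{xy\leq q}\upsilon(y^{*})\wedge\upsilon(x^{*})$. Matching these two joins over the \emph{same} index set does not force equality unless you already know $\upsilon(x^{*})=\upsilon(x)$ termwise --- which is precisely what you are trying to prove. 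The involution axiom of Definition~\ref{principal, def: involutive} only tells you how $(\cdot)^{*}$ interacts with the $A$-actions, not with $\upsilon$, so it does not break the circularity. (As a side remark, your final sentence about $u$ being open because it splits the open map $d$ is both unnecessary for the statement --- ``open'' refers to $d$ --- and false in general: sections of open maps need not be open, and indeed the paper's Remark immediately after the lemma points out that $u$ is typically \emph{not} open here.)
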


\begin{proof}
We already know from Theorem~\ref{principal, theo: semicategory} that $G$ is an open involutive semicategory. Now we prove the unit laws of an internal category, as illustrated by the following commutative diagram:
\[
\xymatrix{
G_0\times_{G_0}G_1\ar[rr]^{u\times \ident}&&G_1\times_{G_0}G_1\ar[d]_m&&G_0\times_{G_0}G_1\ar[ll]_{\ident\times u} \\
G_1\ar[u]^{\langle d, \ident \rangle}&&G_1\ar@{=}[ll]\ar@{=}[rr]&&G_1\;. \ar[u]_{\langle \ident, r \rangle}
}
\]
The commutativity of the left hand square can be proved in terms of inverse images. For all $a\in Q$ we have:
\begin{align*}
[d^*,\ident]\circ (u^*\otimes \ident)\circ m^*(a) & = [d^*,\ident]\circ (u^*\otimes \ident)(\V_{xy\leq a}x\otimes y)\\
& = \V_{xy\leq a} d^*(u^*(x)) \wedge y\\
& = \V_{xy\leq a} (\lres{\upsilon(x)}{1_Q}) \wedge y\\
& = \V_{xy\leq a} (\lres{\upsilon(x)}{(1_Q\wedge y)})& \text{[by \eqref{qf1}]}\\
& = \V_{xy\leq a} (\lres{\upsilon(x)}{y})\\
& = a\;.& \text{[due to (\ref{unitlaws})]}
\end{align*} 
The commutativity of the right hand square follows from the left one using the involution laws $d\circ i=r$, $i\circ i=\ident$ and $i\circ u=u$:
\begin{align*}
m\circ (\ident\times u)\circ \langle \ident,r \rangle &= m\circ \langle \ident,u\circ r \rangle\\
&= m\circ \langle i\circ i,i\circ u\circ d\circ i \rangle\\
&= m\circ (i\times i)\circ \langle \ident,u\circ d \rangle \circ i\\
&= i\circ m\circ\langle \ident,u\circ d \rangle \circ i\\
&= i\circ \ident\circ i\\
&= \ident\;. \qed
\end{align*}
\end{proof}

\begin{remark}
In the unital case the inclusion of units map $u:G_0\to G_1$ is given by $u^*(a)=a\wedge e$. This is an open map of locales because the frame homomorphism $u^*(a)=a\wedge e$ is the right adjoint of the sup-lattice inclusion $\iota: \spp(Q)\to Q$, whose direct image is $u_!=\iota$. Therefore it is possible to prove the unit laws of an internal category in terms of direct images (\cf\ \cite{Re07}*{Th. 4.8}) without appealing to the \emph{unit laws} axiom. We remark that in the non-unital setting the map $u:G_0\to G_1$ defined as above is not necessarily open, therefore the unit laws are required.   
\end{remark}


\subsection{Groupoid quantales}

\begin{definition}\label{principal, def: groupoidquantale}
By a \emph{groupoid quantale} $Q$ will be meant a multiplicative equivariantly supported reflexive quantal frame that satisfies unit laws and moreover satisfies the following condition, referred to as the \emph{inverse law}, for all $a\in Q$:
\begin{equation}\label{inverselaws}
\lres{\upsilon(a)}{1_Q} = \V_{xy^*\leq a} x\wedge y\;.
\end{equation}  
\end{definition}

\begin{remark}\label{remark:inverselaws1}
We remark that \eqref{inverselaws} can be written as
\begin{equation*}
\lres{\upsilon(a)}{1_Q} = \V_{xx^*\leq a} x\;.
\end{equation*}  
In fact, 
\[  \V_{xx^*\leq a} x\le \V_{xy^*\leq a} x\wedge y\leq \V_{(x\wedge y)(x\wedge y)^*\leq a} x\wedge y= \V_{zz^*\leq a} z\;.   \]
\end{remark}

\begin{theorem}\label{opengroupoid}
Let $Q$ be a groupoid quantale, and let $G$ be its associated involutive localic graph:
\[
\xymatrix{
G=G_2\ar[r]^-{m}& G_1\ar@(ru,lu)[]_i\ar@<1.2ex>[rr]^r\ar@<-1.2ex>[rr]_d&&G_0.\ar[ll]|u
}
\]
Then $G$ is an open groupoid.
\end{theorem}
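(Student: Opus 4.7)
Lemma~\ref{opencategory} already establishes that the involutive localic graph $G$ associated to $(Q,\spp,\upsilon)$ is an open involutive category. What remains is to verify the two inverse laws of a groupoid,
\[
m \circ \langle \ident, i \rangle = u \circ d \quad\text{and}\quad m \circ \langle i, \ident \rangle = u \circ r,
\]
as morphisms $G_1 \to G_1$ in $\Loc$. A first observation is that these two equations are equivalent, thanks to the involutive structure of $G$: post-composing the first with $i$ and using $i \circ i = \ident$ (so that $\langle \ident, i \rangle \circ i = \langle i, \ident \rangle$) together with $d \circ i = r$ yields the second. So it suffices to prove the first.

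I would pass to inverse image frame homomorphisms and show that $\langle \ident, i \rangle^* \circ m^* = d^* \circ u^*$ as maps $Q \to Q$. Fix $a \in Q$. For the right-hand side, Lemma~\ref{open} identifies $d^*$ with $\lres{(\cdot)}{1_Q}$ and the construction of $u$ gives $u^* = \upsilon$, so $d^*(u^*(a)) = \lres{\upsilon(a)}{1_Q}$. For the left-hand side, Theorem~\ref{principal, theo: semicategory} yields $m^*(a) = \V_{xy \leq a} x \otimes y$ in $Q \otimes_A Q$, and the pullback formula (established just before Lemma~\ref{units}) ensures that the frame homomorphism $\langle \ident, i \rangle^*$ sends a pure tensor $x \otimes y$ to $\ident^*(x) \wedge i^*(y) = x \wedge y^*$. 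Consequently
\[
\langle \ident, i \rangle^*(m^*(a)) = \V_{xy \leq a} x \wedge y^*.
\]

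Reindexing by $y \mapsto y^*$, which is a bijection of $Q$ under the involution, rewrites this join as $\V_{xy^* \leq a} x \wedge y$, and by the inverse law axiom~\eqref{inverselaws} of Definition~\ref{principal, def: groupoidquantale} this equals $\lres{\upsilon(a)}{1_Q}$, matching the right-hand side. I do not anticipate any substantial obstacle: the proof is essentially a direct translation of the algebraic inverse law into the language of inverse image homomorphisms, with the only delicate points being the correct application of the pullback formula for $\langle \ident, i \rangle^*$ and the substitution using the involution on $Q$.
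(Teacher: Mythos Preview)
Your proof is correct and follows essentially the same route as the paper: invoke Lemma~\ref{opencategory} for the category structure, compute $\langle \ident, i\rangle^*\circ m^*(a)=\V_{xy\le a} x\wedge y^*=\V_{xy^*\le a} x\wedge y=\lres{\upsilon(a)}{1_Q}=d^*(u^*(a))$ via the inverse-law axiom, and derive the second square from the first using the involution. One small wording slip: what you call ``post-composing'' with $i$ is really pre-composition (you use $\langle \ident,i\rangle\circ i=\langle i,\ident\rangle$ and $d\circ i=r$), but the argument itself is fine.
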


\begin{proof}
Due to Lemma~\ref{opencategory} all there is left to do is prove that the involution $i$ satisfies the inverse laws of an internal groupoid, which are described by the commutativity of the following diagram:
\[
\xymatrix{
G_1\ar[d]_{d}  \ar[rr]^{\langle \ident, i \rangle}&&G_2\ar[d]_m&&G_1\ar[d]^{r}\ar[ll]_{\langle i, \ident \rangle} \\
G_0\ar[rr]_{u}&&G_1&&G_0\ar[ll]^{u}
}
\]
Using inverse image homomorphism we shall prove the commutativity of the left hand square. For all $a\in Q$ we have
\begin{align*}
[\ident,i^*]\circ m^*(a) &= [\ident,i](\V_{xy\leq a} x\otimes y)\\
& = \V_{xy\leq a} x\wedge y^*\\
& = \V_{xy^*\leq a} x\wedge y\\
& = \lres{\upsilon(a)}{1_Q}\\
& = d^*(u^*(a))\;. 
\end{align*}
The commutativity of the right hand square follows from the involution and the commutativity of the left square. Therefore $G$ is a groupoid, and it is open due to Lemma~\ref{open}. \qed
\end{proof}

\begin{remark}
Once again regarding the unital case, a unital stably supported quantal frame $Q$ satisfies inverse laws if and only if it is an inverse quantal frame, \ie, $\V \ipi(Q)=1_Q$ (\cf\ \cite{Re07}*{Lemma 4.18}). As we shall see, in the class of unital equivariantly supported reflexive quantal frames the axiom of unit laws can be derived from the axiom of inverse laws (\cf\ Corollary~\ref{unitimpliesinverse} below). Hence, for unital quantales these two axioms are not independent.      
\end{remark}

The following examples show that unit laws and inverse laws axioms are independent in general:

\begin{example}
Consider the unital quantal frame $Q=\{0,e,1\}$ with trivial involution and the obvious multiplication table (the multiplicative unit is $e$). This is stably supported with  $\spp(a)=e$ for all $a\neq 0$. It is also a multiplicative quantal frame because $Q$ is totally ordered, so that the right adjoint
\[
\mu_*:Q\to Q\otimes_A Q=Q\otimes Q
\]
of the reduced multiplication $\mu$ necessarily preserves joins. However, it is not an inverse quantal frame because $1$ is not a join of partial units (\cf\ \cite{Re07}*{Example 4.22}). Let $A$ be the locale $\{0,e\}$. Then $Q$ is a multiplicative equivariantly supported $A$-$A$-quantal frame, and it is reflexive with $\upsilon=\spp$. It is easy to check that it satisfies unit laws, but it does not satisfy inverse laws (otherwise it would have to be an inverse quantal frame --- see Theorem~\ref{theorem:iqf1} below), as the following shows:
\[
\lres{\upsilon(e)}{1} = 1\neq e=\V_{xx^*\le e} x\;.
\]
\end{example}

\begin{example}
Let $A$ be the locale $\{0_A,1_A\}$, and consider the (non-unital) equivariantly supported reflexive $A$-$A$-quantal frame $Q=\{0,a,1\}$ such that $a^2=1$, $a^*=a$, and $\upsilon(a)=0_A$. Note that, necessarily, $\spp(a)=\spp(1)=\upsilon(a1)=1_A$. Similarly to the previous example, $Q$ is multiplicative because it is totally ordered. Now $Q$ satisfies the inverse laws:
\begin{align*}
\lres{\upsilon(0)}{1} &= 0 = \V_{xx^*\le 0} x\\
\lres{\upsilon(a)}{1} &= 0 = \V_{xx^*\le a} x\\
\lres{\upsilon(1)}{1} &= 1 = \V_{xx^*\le 1} x\;.
\end{align*}
But $Q$ does not satisfy the unit laws because
\[
\V_{zw\le a} (\lres{\upsilon(z)}{w})=0 \neq a\,.
\]
\end{example}


\subsection{Unital groupoid quantales}

\begin{theorem}\label{theorem:iqf1}
The class of unital equivariantly supported reflexive quantal frames satisfying inverse laws can be identified with the class of inverse quantal frames.  
\end{theorem}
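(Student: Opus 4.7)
The plan is to reduce the theorem to the characterisation of inverse quantal frames in \cite{Re07}. By Corollary~\ref{stably1}, a unital equivariantly supported quantal frame is exactly a stable quantal frame, with $Q_0=\downsegment(e)$ and $\lres{a}{x}=ax$, so the task becomes showing that on a stable quantal frame, equipping it with a reflexive $\upsilon:Q\to\downsegment(e)$ satisfying the inverse law is equivalent to the condition $\V\ipi(Q)=1_Q$, and that in that case $\upsilon$ is necessarily $\upsilon(q)=q\wedge e$, so that the correspondence $Q\leftrightarrow(Q,\spp,(-)\wedge e)$ is canonical.

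For the forward direction, the crucial first step is to show $\upsilon(e)=e$. Substituting $a=e$ into the inverse law, rewritten via Remark~\ref{remark:inverselaws1}, gives $\upsilon(e)\cdot 1_Q=\V_{xx^*\le e}x\ge e$, since $ee^*=e\le e$. Applying $\spp$ to both sides and using the adjunction $\spp\dashv\lres{(-)}{1_Q}$ of Lemma~\ref{adjoint} together with $\spp(e)=e$ yields $\upsilon(e)\ge\spp(e)=e$; combined with the trivial $\upsilon(e)\le e$ this gives $\upsilon(e)=e$. Substituting back into the inverse law and using $e\cdot 1_Q=1_Q$ (which follows from $\spp(1_Q)=e$ together with the support axiom $\lres{\spp(1_Q)}{1_Q}=1_Q$), we obtain $1_Q=\V T$ where $T=\{x\mid xx^*\le e\}$. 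Taking involutions gives $\V T^*=1_Q$ for $T^*=\{x\mid x^*x\le e\}$. For any $t\in T$ and $t'\in T^*$ one has $t\wedge t'\in\ipi(Q)$, since $(t\wedge t')(t\wedge t')^*\le tt^*\le e$ and dually $(t\wedge t')^*(t\wedge t')\le t'^*t'\le e$. Hence by frame distributivity
\[
\V\ipi(Q)\ge\V_{t\in T,\,t'\in T^*}t\wedge t'=(\V T)\wedge(\V T^*)=1_Q\,,
\]
so $Q$ is an inverse quantal frame.

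For the converse, assume $Q$ is an inverse quantal frame and set $\upsilon(q)=q\wedge e$. Distributivity makes this a frame homomorphism $Q\to\downsegment(e)$; reflexivity $\upsilon(a\cdot 1_Q)=a$ for $a\in\downsegment(e)$ follows from the stability identity $a\cdot 1_Q\wedge e=\spp(a)=a$ recalled in the preliminaries. To check the inverse law, the inequality $(a\wedge e)\cdot 1_Q\ge\V_{xx^*\le a}x$ is already a stable quantal frame axiom; for the reverse inequality, expand $(a\wedge e)\cdot 1_Q=\V_{s\in\ipi(Q)}(a\wedge e)s$ using $\V\ipi(Q)=1_Q$, and observe that each summand lies in $\{x\mid xx^*\le a\}$ because $((a\wedge e)s)((a\wedge e)s)^*\le(a\wedge e)ss^*(a\wedge e)\le(a\wedge e)e(a\wedge e)=a\wedge e\le a$. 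Finally, uniqueness of $\upsilon$ follows from the equation $\upsilon(a)\cdot 1_Q=(a\wedge e)\cdot 1_Q$ (both sides equal $\V_{xx^*\le a}x$) together with the order isomorphism $\spp:\rs(Q)\cong\downsegment(e)$ of Corollary~\ref{principal, cor: rq}.

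The main obstacle is deriving $\upsilon(e)=e$ from the bare inverse law; without this step the inverse law looks too weak to produce $\V\ipi(Q)=1_Q$, but once this identity is in hand the rest is routine manipulation with stable quantal frame identities and frame distributivity.
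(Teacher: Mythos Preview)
Your proof is correct, but the route differs from the paper's. The paper computes $\upsilon(q)=q\wedge e$ for all $q$ in one stroke via the chain
\[
\upsilon(q)=\spp(\lres{\upsilon(q)}{1_Q})=\spp\bigl(\textstyle\V_{xx^*\le q}x\bigr)=\V_{xx^*\le q}\spp(x)=\V_{xx^*\le q}(x1_Q\wedge e)=\V_{xx^*\le q}(xx^*\wedge e)=q\wedge e,
\]
using equivariance, the inverse law, and the stable-support formula $\spp(x)=x1_Q\wedge e$; it then cites \cite{Re07}*{Lemma~4.18} to conclude $\V\ipi(Q)=1_Q$. You instead extract only the single value $\upsilon(e)=e$, apply the inverse law at $a=e$ to get $1_Q=\V_{xx^*\le e}x$, and then use frame distributivity on $T\wedge T^*$ to produce $\V\ipi(Q)=1_Q$ directly, recovering $\upsilon(q)=q\wedge e$ only afterwards as a separate uniqueness argument. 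The paper's approach is more economical and delivers the identification of $\upsilon$ and the inverse quantal frame condition simultaneously, at the price of invoking an external lemma; your approach is more self-contained, making the passage from the inverse law to $\V\ipi(Q)=1_Q$ explicit, but requires the additional uniqueness step at the end.
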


\begin{proof}
Let us consider $(Q,\spp,\upsilon,e)$ an equivariantly supported reflexive quantal frame with a unit and satisfying inverse laws. Recall from Corollary~\ref{stably1} that $Q$ is a stable quantal frame with 
\[  \spp(q)=q1_Q\wedge e\,. \]
Now we show that $\upsilon(q)=q\wedge e$. In fact, by applying the equivariance of the support (\cf\ Definition~\ref{inverselaws}) we get
\begin{align*}
\upsilon(q) &= \spp(\lres{\upsilon(q)}{1_Q})\\
&= \spp(\V_{xx^*\leq q} x)& \text{(by Remark~\ref{remark:inverselaws1})}\\
&= \V_{xx^*\leq q}\spp(x)\\
&= \V_{xx^*\leq q}x1_Q\wedge e\\
&= \V_{xx^*\leq q}xx^*\wedge e\\
&= q\wedge e\;.
\end{align*}  
Now, due to the involution and because $Q$ satisfies inverse laws, we have  $\V \ipi(Q)=1_Q$ (\cf\ \cite{Re07}*{Lemma 4.18}). This implies that $Q$ is an inverse quantal frame. The converse follows from Corollary~\ref{stably1} and \cite{Re07}*{Lemma 4.18}. \qed
\end{proof}

\begin{theorem}\label{invqufrsatunitlaws}
Let $Q$ be an inverse quantal frame with base locale $A=\downsegment(e)$. Then, regarded as an equivariantly supported reflexive $A$-$A$-quantal frame, $Q$ satisfies unit laws.
\end{theorem}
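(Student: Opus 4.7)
The plan is to unpack what the unit law says in this specialized setting and then use the defining property of inverse quantal frames, namely $\V\ipi(Q)=1_Q$.

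First I would record the translation of the notation. Since $Q$ is an inverse quantal frame with $A=\downsegment(e)$, Corollary~\ref{stably1} tells us $Q$ is a stable quantal frame, and the $A$-action on $Q$ is given by multiplication, $\lres b y=by$ for $b\in A$. Moreover, by the same calculation that appears in the proof of Theorem~\ref{theorem:iqf1}, the reflexivity homomorphism is forced to be $\upsilon(q)=q\wedge e$. Thus the unit law \eqref{unitlaws} becomes the identity
\[
\V_{xy\le a}\,(x\wedge e)y \;=\; a
\]
for all $a\in Q$, and this is what I need to prove.

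The inequality $(\le)$ is immediate: for any $x,y$ with $xy\le a$ we have $(x\wedge e)y\le xy\le a$, hence the join on the left is bounded by $a$. For the converse inequality $(\ge)$, I would invoke the defining property of inverse quantal frames, $\V\ipi(Q)=1_Q$, together with the known fact that every element of $Q$ is a join of partial units below it, i.e.\ $a=\V\{s\in\ipi(Q)\mid s\le a\}$. It then suffices to show that each such $s$ lies below $\V_{xy\le a}(x\wedge e)y$. For this, I would set $x=ss^*$ and $y=s$. Because $s$ is a partial unit we have $ss^*\le e$, so $x\wedge e=ss^*$; and from $s\le ss^*s$ (and in fact $s=ss^*s$ for partial units) together with $s\le a$ we get $xy=ss^*s=s\le a$, so $(x,y)$ is admissible in the join. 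Consequently $(x\wedge e)y=ss^*\cdot s=s$, giving $s\le \V_{xy\le a}(x\wedge e)y$ as required.

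There is no real obstacle here; the only thing to be careful about is that the formulas $\upsilon(q)=q\wedge e$ and $\lres b y=by$ are correctly identified in the unital inverse quantal frame case, and that the factorization $s=ss^*\cdot s$ of a partial unit provides exactly a witness of the form $(x\wedge e)y$ with $xy\le a$. Combining $(\le)$ and $(\ge)$ yields the equality, which is the unit law.
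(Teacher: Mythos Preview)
Your proof is correct and follows essentially the same route as the paper: both directions are handled identically, with $(\le)$ coming from $(x\wedge e)y\le xy\le a$ and $(\ge)$ obtained by taking $x=ss^*$, $y=s$ for each partial unit $s\le a$ and using $ss^*\le e$ and $ss^*s=s$. The only cosmetic difference is that the paper writes $\lres{\upsilon(x)}{y}$ as $(x\wedge e)1_Q\wedge y$ before simplifying via $\spp(x\wedge e)y=(x\wedge e)y$, whereas you identify the action with multiplication from the outset.
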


\begin{proof}
On one hand we have
\begin{align*}
\V_{xy\leq q} (\lres{\upsilon(x)}{y}) & = \V_{xy\leq q} (x\wedge e)1_Q \wedge y\\
& = \V_{xy\leq q} \spp(x\wedge e)y\\
& = \V_{xy\leq q} (x\wedge e)y& \text{(because $x\wedge e\in \downsegment(e)$)}\\
& \leq \V_{xy\leq q} xy\\
& = q\;.     
\end{align*}
This shows that $\V_{xy\leq q} (\lres{\upsilon(x)}{y})\leq q$. In order to prove that $\V_{xy\leq q} (\lres{\upsilon(x)}{y})\geq q$, we show that $\V_{xy\leq q} (\lres{\upsilon(x)}{y})\geq s$ for all partial units $s\in \ipi(Q)$ such that $s\leq q$. Then, taking the supremum and using the fact that this supremum is $q$ because $Q$ is inverse, we get the inequality. Let $s\in \ipi(Q)$ such that $s\leq q$. We have $s=ss^*s\leq q$. Therefore taking $x=ss^*$ and $y=s$, we get
\begin{align*}
\V_{xy\leq q} (\lres{\upsilon(x)}{y}) & \geq \lres{\upsilon(ss^*)}{s} \\
& = (ss^*\wedge e)1_Q \wedge s\\
& = \spp(s)1_Q \wedge s\\
& = \spp(s)s\\
& = s\;. \qed
\end{align*} 
\end{proof}

\begin{corollary}\label{unitimpliesinverse}
Let $A$ be a locale and $Q$ a unital equivariantly supported reflexive $A$-$A$-quantal frame. If $Q$ satisfies inverse laws then it satisfies unit laws.
\end{corollary}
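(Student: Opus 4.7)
The plan is to simply chain Theorem~\ref{theorem:iqf1} with Theorem~\ref{invqufrsatunitlaws}, modulo reconciling the abstract base locale $A$ with the concrete base $\downsegment(e)$. First I would observe that the hypotheses on $Q$ are exactly those of Theorem~\ref{theorem:iqf1}: $Q$ is a unital equivariantly supported reflexive quantal frame satisfying the inverse laws. That theorem then gives that $Q$ is (canonically) an inverse quantal frame, and moreover its proof identifies the reflexive map as $\upsilon(q)=q\wedge e$ and the support as the classical one $\spp(q)=q1_Q\wedge e$.

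Next I would bridge the two base locales. Theorem~\ref{invqufrsatunitlaws} establishes the unit laws for the inverse quantal frame $Q$ regarded as a $\downsegment(e)$-$\downsegment(e)$-quantal frame in the canonical way (change of base along the inclusion $\downsegment(e)\hookrightarrow Q$). In the corollary, however, $Q$ is given as an $A$-$A$-quantal frame for an abstract locale $A$. By Corollary~\ref{principal, cor: rq} the map $a\mapsto \lres{a}{1_Q}$ is an order isomorphism $A\cong \rs(Q)$, and by Corollary~\ref{stably1} (together with the classical formulas recalled in section~\ref{unitalsupports}) $\rs(Q)$ in the unital setting is identified with $\downsegment(e)$ via $x\mapsto x\wedge e$. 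Composing these gives a canonical isomorphism $A\cong \downsegment(e)$ through which the two bimodule structures on $Q$ and the two reflexive maps coincide.

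Having made this identification, Theorem~\ref{invqufrsatunitlaws} applies and yields
\[
\V_{xy\leq q}\bigl(\lres{\upsilon(x)}{y}\bigr) = q
\]
for all $q\in Q$, which is the unit laws axiom for $Q$ with its original base $A$.

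The only step requiring care is checking that the $A$-$A$-bimodule structure and the reflexive map $\upsilon$ assumed in the corollary really do match, under the canonical iso $A\cong\downsegment(e)$, the structures used in Theorem~\ref{invqufrsatunitlaws}. For the bimodule structure this is forced by Lemma~\ref{adjoint} (uniqueness of the equivariant support, hence of its right adjoint $\lres{(\cdot)}{1_Q}$ which encodes the action), and for the reflexive map it is the computation $\upsilon(q)=q\wedge e$ appearing in the proof of Theorem~\ref{theorem:iqf1}. Once these matchings are in place the corollary is immediate.
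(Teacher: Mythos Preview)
Your approach is exactly the paper's: chain Theorem~\ref{theorem:iqf1} with Theorem~\ref{invqufrsatunitlaws}. The paper's proof is two sentences and leaves the reconciliation of the abstract base $A$ with $\downsegment(e)$ implicit; your additional paragraph making this identification explicit (via Corollary~\ref{principal, cor: rq}, Lemma~\ref{adjoint}, and the computation $\upsilon(q)=q\wedge e$ from the proof of Theorem~\ref{theorem:iqf1}) is a legitimate and arguably necessary clarification rather than a different argument.
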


\begin{proof}
Assume that $Q$ satisfies inverse laws. Theorem~\ref{theorem:iqf1} implies that $Q$ is an inverse quantal frame, and thus, by Theorem~\ref{invqufrsatunitlaws}, it satisfies unit laws. \qed
\end{proof}


\subsection{Quantal groupoids}

Given a groupoid quantale $Q$ we denote its associated open groupoid by $\mathcal{G}(Q)$. Recall that by a \emph{quantal groupoid} is meant a localic groupoid $G$ whose multiplication map is semiopen. Now we shall see that if $G$ is an open groupoid then the associated quantale $\opens(G)$ necessarily is a groupoid quantale.

\begin{theorem}\label{principal, theo: groupoidquantale}
Let $G$ be an open groupoid. Then its associated quantale $\opens(G)$ is a groupoid quantale.
\end{theorem}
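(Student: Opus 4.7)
The plan is to verify, axiom by axiom, that $\opens(G)$ carries all the structure of a groupoid quantale and that each axiom is a direct algebraic transcription of the corresponding groupoid axiom together with Frobenius reciprocity for the open maps $d$, $r$, $u$, $m$.

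First I would fix the structure. Let $A=\opens(G_0)$ and $Q=\opens(G_1)$, with $A$-$A$-bimodule structure
\[
\lres{a}{x}=d^*(a)\wedge x,\qquad \rres{x}{a}=r^*(a)\wedge x,
\]
involution $x^*=i^*(x)=i_!(x)$, support $\spp=d_!:Q\to A$, and reflexivity map $\upsilon=u^*:Q\to A$. The bimodule axiom \eqref{associativitybimodules} is immediate because the two actions are meets with fixed elements. That $Q$ is an $A$-$A$-quantale (axioms \eqref{AAquantale1}--\eqref{AAquantale3}) follows from Frobenius reciprocity applied to $m$ combined with the commuting squares $d\circ\pi_1=d\circ m$ and $r\circ\pi_2=r\circ m$: pulling $d^*(a)$ back along $\pi_1$ lets one absorb it through $m_!$ to produce $\lres{a}{(xy)}$, and analogously on the right. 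The involution axioms of Definition~\ref{principal, def: involutive} follow from $i\circ i=\ident$, $d\circ i=r$ and the fact that $i^*$ is a frame isomorphism. The quantal frame axioms \eqref{qf1}--\eqref{qf2} reduce to associativity and commutativity of meets.

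Next I would check that $\spp=d_!$ is an equivariant support. The normalization $\spp(1_Q)=1_A$ is openness of $d$ applied to the top element. For \eqref{spp3} I would use that $d^*\spp$ is the closure operator associated with the adjunction $d_!\dashv d^*$ and that $x\leq d^*(d_!(x))\wedge 1_Q$ together with reverse inequality coming from the computation $\lres{\spp(x)}{x}=d^*(d_!(x))\wedge x=x$ once $x\le d^*d_!x$ is applied. Axiom \eqref{spp2}, $\lres{\spp(x)}{y}\le xx^*y$, is the algebraic shadow of the geometric fact that whenever $g\in y$ lies over an object in the domain image of some $h\in x$, then $g=hh^{-1}g\in xx^*y$; formally it is a direct computation using $m_!$, $i_!$ and Frobenius for $m$. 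Equivariance $\spp(\lres{a}{x})=a\wedge\spp(x)$ is precisely Frobenius reciprocity for $d$: $d_!(d^*(a)\wedge x)=a\wedge d_!(x)$. Reflexivity $\upsilon(\lres{a}{1_Q})=a$ follows from $d\circ u=\ident$, and the symmetric identity $\upsilon(\rres{1_Q}{a})=a$ from $r\circ u=\ident$.

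Multiplicativity is the step I expect to be the most delicate, but it is handled exactly as in \cite{Re07}*{Th.\ 4.8} and \cite{PR12}: the reduced multiplication $\mu_A:Q\otimes_A Q\to Q$ equals $m_!$ after identifying $Q\otimes_A Q=\opens(G_2)$ via $d^*$ and $r^*$, and openness of $m$ (which follows from openness of $d$) gives that $(\mu_A)_*=m^*$ preserves joins since $m^*$ has a left adjoint. So one merely records the identification and invokes openness of $m$.

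Finally I would verify the unit and inverse laws, each of which is a one-line translation of the corresponding groupoid identity via inverse images. For the unit law \eqref{unitlaws}, the commuting diagram
\[
\xymatrix@C=1.2cm{
G_1\ar[r]^-{\langle d,\ident\rangle}&G_0\times_{G_0}G_1\ar[r]^-{u\times\ident}&G_2\ar[r]^-m&G_1
}
\]
gives, for every $a\in Q$,
\[
a=(m\circ(u\times\ident)\circ\langle d,\ident\rangle)^*(a)=\V_{xy\le a}d^*u^*(x)\wedge y=\V_{xy\le a}\lres{\upsilon(x)}{y},
\]
exactly as in the proof of Lemma~\ref{opencategory} read backwards. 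For the inverse law \eqref{inverselaws}, the groupoid identity $m\circ\langle\ident,i\rangle=u\circ d$ gives
\[
\lres{\upsilon(a)}{1_Q}=d^*u^*(a)=(\langle\ident,i\rangle)^*m^*(a)=\V_{xy\le a}x\wedge i^*(y)=\V_{xy^*\le a}x\wedge y.
\]
This establishes all the required axioms and completes the proof.\qed
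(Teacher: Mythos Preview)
Your outline is essentially the same as the paper's: define the $A$-$A$-bimodule structure via $d^*$ and $r^*$, put $\spp=d_!$ and $\upsilon=u^*$, and read off each groupoid-quantale axiom from the corresponding groupoid identity together with Frobenius reciprocity. The verification of the bimodule/quantal-frame axioms, equivariance of $\spp$, reflexivity, multiplicativity, the unit law, and the inverse law are all handled just as the paper does (the paper delegates several of these to \cite{PR12}*{Th.~2.41} and \cite{Re07}*{Lemma~5.4}, but the content is identical).

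There is, however, one genuine gap. Axiom~\eqref{spp2}, namely $\lres{\spp(x)}{y}=d^*d_!(x)\wedge y\le xx^*y$, is the only support axiom that actually uses the \emph{inverse} structure of $G$ (it fails for open categories that are not groupoids), and it does not follow from ``$m_!$, $i_!$ and Frobenius for $m$'' in any routine way. Your justification is a point-set picture, not a localic argument. The paper handles this by reversing the order you chose: it first invokes \cite{Re07}*{Lemma~5.4} to obtain the inverse-law formula
\[
d^*u^*(q)=\V_{bb^*\le q} b,
\]
together with \cite{Re07}*{Lemmas~5.3 and~5.5} to get $d_!(x)=u^*(x1)$ and $d^*u^*(q1)=q1$, and from these derives the key inequality $x1\wedge y\le xx^*y$. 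Only then does \eqref{spp2} follow:
\[
\lres{\spp(x)}{y}=d^*(d_!(x))\wedge y=d^*(u^*(x1))\wedge y=x1\wedge y\le xx^*y.
\]
So the substantive step you are missing is precisely this use of the inverse law to obtain $x1\wedge y\le xx^*y$; without it the argument for \eqref{spp2} is incomplete.
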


\begin{proof}
Define $\upsilon=u^*$ and $\spp=d_!$. From the proof of \cite{PR12}*{Th.\ 2.41} it follows that $\opens(G)$ is a multiplicative reflexive quantal frame satisfying satisfying \eqref{unitlaws} [note that the meet $\upsilon(x) \wedge y$ in \cite{PR12}*{Th.\ 2.41} becomes our restriction $\lres{\upsilon(x)}y$ because the base locale in \cite{PR12} is $\rs(\opens(G))$]. In addition, in \cite{Re07}*{Lemma 5.4} it is seen that $\opens(G)$ satisfies \eqref{inverselaws}, which is written as follows:
\begin{equation}
d^*(u^*(q)) = \V_{bc^*\leq q} b\wedge c=\V_{bb^*\leq q} b\;.
\end{equation}
Since for quantal groupoids $d^*(q)$ is always right-sided (similarly, $r^*(q)$ is left-sided) \cite{Re07}*{Lemma 5.3}, in particular $d^*(u^*(q))$ is always right-sided and thus 
\begin{equation}\label{eq:duq1=q1}
d^*(u^*(q1))=q1\;. 
\end{equation}
Moreover, for all $a,b\in \opens(Q)$, we have
\[
a\leq \V_{xx^*\leq aa^*} x=d^*(u^*(aa^*))
\]
and 
\[
d^*(u^*(a))\wedge b\leq \V_{xx^*\leq a} x\wedge b\leq \V_{xx^*b\leq ab} x\wedge b\leq \V_{(x\wedge b)((x\wedge b))^*(x\wedge b)\leq ab} x\wedge b\leq  ab\;.
\]
Therefore
\begin{equation}\label{eq:x1yxxy}
a1\wedge b\leq aa^*b
\end{equation}
for all $a,b\in \opens(G)$, and all we have to do is show that $\opens(G)$ is an equivariantly supported $\opens(G_0)$-$\opens(G_0)$-quantal frame. First, note that $\opens(G_1)$ can be regarded as an $\opens(G_0)$-$\opens(G_0)$-quantale with actions defined by, for all $a\in \opens(G_0)$ and $q\in \opens(G_1)$,
\begin{align*}
\lres{a}{q} &= d^*(a)\wedge q\\
\rres{q}{a} &= r^*(a)\wedge q\;,
\end{align*}
and the involution defined by
\[ a^*=i_!(a)=i^*(a)\;.  \] 
Now it is straightforward to see that $\opens(G_1)$ is an $\opens(G_0)$-$\opens(G_0)$-quantal frame. Taking into account that $G$ is an open groupoid, we can define a sup-lattice homomorphism $\spp: \opens(G_1)\to \opens(G_0)$ given by $\spp:=d_!$. In order to see that this is the support of $\opens(G_1)$ we notice that, by \cite{Re07}*{Lemma 5.5},
\begin{equation}
d_!(x) =u^*(x1)
\end{equation}
for all $x\in\opens(G_1)$.  
Then for all $x,y\in \opens(G_1)$ we have
\begin{itemize}
\item $\spp(1)=d_!(1)=u^*(11)=u^*(1)=1_{\opens(G_0)}$ because $u^*$ is a surjective frame homomorphism;
\item  $\lres{\spp(x)}{x}=d^*(d_!(x))\wedge x=d^*(u^*(x1))\wedge x= x1\wedge x= x$ ---
similarly we prove $x=\rres{x}{\spp(x^*)}$\;;
\item $\lres{\spp(x)}{y} = d^*(d_!(x)) \wedge y= d^*(u^*(x1)) \wedge y= x1\wedge y\leq xx^*y$, using \eqref{eq:duq1=q1} and \eqref{eq:x1yxxy}.  
\end{itemize}
It remains to show that $\spp=d_!$ is $A$-equivariant, which follows easily from the fact that $d$ is open and hence satisfies the Frobenius reciprocity condition:
\begin{align*}
\spp(\lres{a}{q}) &= d_!(d^*(a)\wedge q)\\
& = a\wedge d_!(q)\\
& = a\wedge \spp(q)\;.
\end{align*}  
This shows that $\opens(G_1)$ is a multiplicative equivariantly supported reflexive  $\opens(G_0)$-$\opens(G_0)$-quantale frame which in addition satisfies the unit and inverse laws, so $\opens(G)$ is a groupoid quantale. \qed
\end{proof}

Now the following is straightforward:

\begin{theorem}\label{bijection}
$\mathcal{G}(\opens(G))\cong G$ and $\opens(\mathcal{G}(Q))\cong Q$ for any open groupoid $G$ and a groupoid quantale $Q$.
\end{theorem}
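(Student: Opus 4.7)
The plan is to verify both isomorphisms by unpacking the constructions and matching each piece of structure. In both directions the carriers agree on the nose (the object and arrow locales of $\mathcal{G}(\opens(G))$ have frames $\opens(G_0)$ and $\opens(G_1)$; dually $\opens(\mathcal{G}(Q))$ has underlying sup-lattice $Q$), so the task reduces to checking that the structure maps match. The real work sits on a single equation —the one identifying the multiplication— while all other clauses follow immediately from Lemma~\ref{adjoint}, the bimodule axioms \eqref{qf1}--\eqref{qf2}, and the recipes given in Theorems~\ref{principal, theo: semicategory} and \ref{principal, theo: groupoidquantale}.

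For $\mathcal{G}(\opens(G))\cong G$, set $Q=\opens(G)$. By Theorem~\ref{principal, theo: groupoidquantale} the base locale is $\opens(G_0)$, the actions are $\lres{a}{q}=d^*(a)\wedge q$ and $\rres{q}{a}=r^*(a)\wedge q$, the support is $\spp=d_!$, the reflexive map is $\upsilon=u^*$, and the involution is $i^*$. Applying $\mathcal{G}$, the new domain map $d'$ satisfies $(d')^*(a)=\lres{a}{1_Q}=d^*(a)\wedge 1_Q=d^*(a)$, hence $d'=d$; analogously $r'=r$, $u'=u$ and $i'=i$. The delicate point is the multiplication: $(m')^*=(\mu_A)_*$, where $\mu_A$ is the reduced multiplication of $Q$; but under the pushout identification $\opens(G_2)\cong Q\otimes_A Q$ recorded just before Lemma~\ref{units}, the original quantale multiplication factors as $m_!\circ\pi$, so $\mu_A=m_!$ and therefore $(m')^*=(m_!)_*=m^*$, giving $m'=m$.

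For $\opens(\mathcal{G}(Q))\cong Q$, the underlying sup-lattice of $\opens(\mathcal{G}(Q))$ is $Q$ by construction. The recovered actions are $d^*(a)\wedge q=\lres{a}{1_Q}\wedge q=\lres{a}{q}$ by \eqref{qf1}, and symmetrically for the right action by \eqref{qf2}. The recovered support $d_!$ is the left adjoint of $d^*=\lres{(\cdot)}{1_Q}$, which by Lemma~\ref{adjoint} is the original $\spp$. The involution $i_!=(i^*)^{-1}$ recovers $(-)^*$ since involution is self-inverse, and $u^*=\upsilon$ by definition. Finally, the recovered multiplication is $m_!\circ\pi$; since $m^*=(\mu_A)_*$ we have $m_!=\mu_A$, so the reconstructed multiplication equals $\mu_A\circ\pi=\mu$, the original one.

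The main obstacle is precisely the multiplication clause. One must justify the identification $\opens(G_2)\cong Q\otimes_A Q$ (a pushout-of-frames computation using the openness of $d$ and $r$) and then verify that this identification intertwines the factorization $\mu=m_!\circ\pi$ on the quantale side with the defining equation $m^*=(\mu_A)_*$ on the groupoid side; multiplicativity is exactly what ensures $(\mu_A)_*$ is a frame homomorphism and hence corresponds to a bona fide locale map $m$. Once this is in place, the remaining checks are routine bookkeeping and the isomorphisms are the identities on carriers.
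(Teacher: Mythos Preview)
Your proposal is correct and, indeed, is essentially the only reasonable approach: the paper does not actually give a proof of this theorem, merely asserting beforehand that ``the following is straightforward.'' Your detailed verification---matching structure maps on both sides and isolating the multiplication clause as the only nontrivial identification (via $\mu_A=m_!$ under the pushout isomorphism $\opens(G_2)\cong Q\otimes_A Q$)---is precisely the unpacking that the paper's remark implies but omits.
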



\subsection{The pair groupoid}\label{pairgroupoid}

Let $G$ be an open groupoid and let us denote by $\intg G$ the \emph{pair groupoid} of $G$, in other words the pullback $\intg G_1=G_1\times_{G_0} G_1$ equipped with the usual groupoid structure:
\[\xymatrix{
\intg G_2\ar[rr]^-{\intg m}&&\intg G_1\ar@(ru,lu)[]_{\intg \imath}\ar@<1.2ex>[rrr]^-{\intg d=\pi_1}\ar@<-1.2ex>[rrr]_-{\intg r=\pi_2}&&&\intg G_0=G_1\ar[lll]|-{\intg u=\Delta}}
\]
where in particular $\intg G_2$ is the pullback of $\intg d$ and $\intg r$.
\begin{theorem}\label{principal, theo: pairgrpd}
Let $Q$ be a groupoid quantale. Then $Q\otimes_A Q$ is a groupoid quantale which is exactly the quantale of the pair groupoid $G_1\times_{G_0} G_1 \rightrightarrows G_1$, where $\opens({G_1})=Q$ and $\opens({G_0})=A$.
\end{theorem}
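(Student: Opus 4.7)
The strategy is to invoke the correspondence of Theorem~\ref{bijection} between groupoid quantales and open groupoids. Let $G=\mathcal{G}(Q)$ be the open groupoid associated to $Q$, so that $\opens(G_1)=Q$ and $\opens(G_0)=A$. The plan is then to verify that the pair groupoid $\intg G$ is itself an open groupoid, and then identify its associated quantale with $Q\otimes_A Q$.

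For the openness of $\intg G$, the domain map $\intg d=\pi_1:G_1\times_{G_0}G_1\to G_1$ is the pullback of the open map $r:G_1\to G_0$ along $d:G_1\to G_0$, and since pullbacks of open localic maps are open, $\intg d$ is open. Hence $\intg G$ is open, and by Theorem~\ref{principal, theo: groupoidquantale} the quantale $\opens(\intg G)$ is a groupoid quantale whose base locale is $\opens(\intg G_0)=\opens(G_1)=Q$.

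For the identification $\opens(\intg G_1)\cong Q\otimes_A Q$, recall from the lemma preceding the definition of $\upsilon$ that $Q\otimes_A Q$ is exactly the pushout in $\Frm$ of $d^*$ and $r^*$, which by duality is $\opens(G_1\times_{G_0}G_1)=\opens(\intg G_1)$. Under this isomorphism the groupoid quantale structure on $\opens(\intg G)$ translates into the expected structure on $Q\otimes_A Q$: the left and right actions come from $\pi_1^*$ and $\pi_2^*$ (so restriction by $q\in Q$ corresponds to meeting with $q\otimes 1_Q$ and $1_Q\otimes q$, respectively); the involution is the swap induced by $(g,h)\mapsto(h,g)$; the multiplication is induced by $\intg m_!$ where $\intg m((g,h),(h,k))=(g,k)$; the support is $(\intg d)_!=\pi_{1!}$; and the reflexive map is $\Delta^*$.

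The main obstacle is verifying that the structure maps on $\opens(\intg G)$ admit the algebraic descriptions above on $Q\otimes_A Q$; in particular, that the multiplication contracts the two middle factors via the multiplication of $Q$. However, since $\opens(\intg G)$ is guaranteed to be a groupoid quantale by Theorem~\ref{principal, theo: groupoidquantale} and the bijection of Theorem~\ref{bijection} ensures that the entire groupoid quantale structure is uniquely determined by the open groupoid $\intg G$, the identification as groupoid quantales follows from the locale-level identification without requiring further axiomatic verification.
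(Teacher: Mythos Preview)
Your argument is correct and takes a genuinely different route from the paper. The paper proceeds algebraically: it writes down explicit formulas on $Q\otimes_A Q$ for the actions, involution, multiplication, support $\intg\spp(x\otimes y)=\lres{\spp(y)}{x}$, and reflexive map $\intg\upsilon(x\otimes y)=x\wedge y$, asserts that these make $Q\otimes_A Q$ an equivariantly supported $Q$-$Q$-quantal frame, and then invokes Theorem~\ref{bijection}. Your route is geometric: you first show that the pair groupoid $\intg G$ is open (via stability of open maps under pullback), apply Theorem~\ref{principal, theo: groupoidquantale} to conclude that $\opens(\intg G)$ is a groupoid quantale, and only then identify the underlying frame with $Q\otimes_A Q$. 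The advantage of your approach is that it bypasses any direct verification of the groupoid-quantale axioms (multiplicativity, unit laws, inverse laws), which the paper's sketch also leaves to the reader; the advantage of the paper's approach is that it records the concrete formulas for the structure maps, which are useful in applications.

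One caution: be careful about \emph{which} fibre product defines $\intg G_1$. For the diagonal $\intg u=\Delta$ to make sense, the pullback $G_1\times_{G_0}G_1$ must be taken along the same structure map on both factors (say $d$ on each side), whereas the lemma you cite identifies $Q\otimes_A Q$ with the pushout of $d^*$ and $r^*$, i.e.\ with $\opens(G_2)$. These two pullbacks are canonically isomorphic for a groupoid (via $(g,h)\mapsto (g,m(g,h))$), and since both $d$ and $r$ are open your openness argument is unaffected; but the identification of $\opens(\intg G_1)$ with the standard bimodule tensor $Q\otimes_A Q$ goes through this groupoid isomorphism rather than being literal. The paper is itself somewhat loose on this point.
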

\begin{proof}
To start with, let us notice that pure tensors of $Q\otimes_A Q$ can be written in the form $x\otimes y$ with $\spp(x)=\spp(y)$. Indeed, $Q\otimes_A Q$ is the pushout of $d^*$ and $r^*$ in $\Frm$, and thus we have, for all $a\in A$,
\[\lres{a}{x}\otimes y = (d^*(a)\wedge x)\otimes y=x\otimes(d^*(a)\wedge y)=x\otimes \lres{a}{y}\;.\]
Hence, $x\otimes y=\lres{\spp(x)}{x}\otimes\lres{\spp(y)}{y}=\lres{\spp(y)}{x}\otimes\lres{\spp(x)}{y}=w\otimes z$ with $\spp(z)=\spp(w)=\spp(x)\spp(y)$. Now, notice that $Q\otimes_A Q$ is an involutive $Q$-$Q$-quantal frame with
\begin{align}
\lres{z}{(x\otimes y)}&=(x\wedge z)\otimes y\\
\rres{(x\otimes y)}{z}&= x\otimes (y\wedge z)\\
(x\otimes y)^*&= y\otimes x\\
(x\otimes y)(z\otimes w)&= (\lres{\spp(y\wedge z)}{x})\otimes w
\end{align}
for all $x,y,z\in Q$. Moreover, it can be endowed with a support $\intg \spp:Q\otimes_A Q\to Q $ given by the sup-lattice homomorphism
\begin{align}
\intg \spp(x\otimes y) &= \lres{\spp(y)}{x}
\end{align}
and with a frame homomorphism $\intg \upsilon: Q\otimes_A Q\to Q$ given by
\begin{align}
\intg \upsilon(x\otimes y) &=x\wedge y\,.
\end{align} 
It is straightforward to verify that $(Q\otimes_A Q,\intg \spp, \intg \upsilon)$ is an equivariantly supported $Q$-$Q$-quantal frame. By Theorem~\ref{bijection} we conclude that $Q\otimes_A Q$ is exactly the quantale of the pair groupoid $G_1\times_{G_0} G_1 \rightrightarrows G_1$, where $\opens({G_1})=Q$ and $\opens({G_0})=A$.\qed
\end{proof}


\subsection{Effective equivalence relations}\label{subsec: principal groupoids}

Recall that by an \emph{effective equivalence relation} is meant an open groupoid $G$ which is the kernel pair of the co-equalizer of $d$ and $r$.  In other words, this means that the square
\[
\vcenter{\xymatrix{
G_1\ar[r]^-{r} \ar[d] \ar[d]_{d}  &G_0 \ar@{->>}[d]\\
G_0\ar@{->>}[r]& G_0/G\;,
}
}
\]
where $G_0/G$ is the \emph{orbit locale}, is a pull-back in $\Loc$. 

\begin{theorem}\label{principal, theo: effectiveeqr}
Let $Q$ be a groupoid quantale and let $G=\mathcal{G}(Q)$ be its open groupoid. Then $Q$ is a principal quantale if and only if $G$ is an effective equivalence relation.
\end{theorem}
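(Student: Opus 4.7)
The plan is to translate the statement that $G$ is an effective equivalence relation into a condition about frames via the $\Loc$–$\Frm$ duality, and recognize this condition as exactly part~\eqref{thm:effective3} of Theorem~\ref{thm:effective}.

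First, I would recall from the constructions leading up to Theorem~\ref{bijection} that for the groupoid quantale $Q$ with associated open groupoid $G=\mathcal{G}(Q)$, we have $\opens(G_1)=Q$, $\opens(G_0)=A$, and the inverse image maps of the domain and range are
\[
d^*(a)=\lres{a}{1_Q}\,,\qquad r^*(a)=\rres{1_Q}{a}
\]
for all $a\in A$. In particular, under the duality $\Loc^{\mathrm{op}}=\Frm$, the coequalizer of $d,r:G_1\rightrightarrows G_0$ in $\Loc$ corresponds to the equalizer of $d^*, r^*:A\rightrightarrows Q$ in $\Frm$, and this equalizer is precisely the subframe
\[
E=\{a\in A\mid \lres{a}{1_Q}=\rres{1_Q}{a}\}\subseteq A
\]
introduced in Theorem~\ref{thm:effective}. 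Therefore $\opens(G_0/G)\cong E$ with structure map $E\hookrightarrow A$.

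Next, by definition, $G$ is an effective equivalence relation precisely when the square
\[
\vcenter{\xymatrix{
G_1\ar[r]^-{r}\ar[d]_{d} & G_0\ar[d] \\
G_0\ar[r] & G_0/G
}}
\]
is a pullback in $\Loc$. Dualising to $\Frm$, this is equivalent to saying that the square
\[
\vcenter{\xymatrix{
E\ar[r]\ar[d] & A\ar[d]^{d^*} \\
A\ar[r]_{r^*} & Q
}}
\]
is a pushout in $\Frm$, in other words that the triple $(Q,d^*,r^*)=(Q,\lres{(.)}{1_Q},\rres{1_Q}{(.)})$ is the cokernel pair in $\Frm$ of the inclusion $E\hookrightarrow A$. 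This is verbatim condition~\eqref{thm:effective3} of Theorem~\ref{thm:effective}, which by that theorem is equivalent to $Q$ being a principal quantale. Hence the two conditions coincide.

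The only non-trivial point is ensuring that the reconstruction $\mathcal{G}(Q)$ does indeed give $d^*=\lres{(.)}{1_Q}$ and $r^*=\rres{1_Q}{(.)}$, so that the equalizer $E$ computed inside $A$ agrees in both the groupoid-theoretic and the algebraic descriptions. This is immediate from the construction in Lemma~\ref{open} and the surrounding results, so no serious obstacle arises; the proof is essentially a one-step translation through the $\Loc$–$\Frm$ duality once the identification $\opens(G_0/G)=E$ is made.
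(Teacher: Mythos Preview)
Your proof is correct and follows essentially the same route as the paper: both arguments identify $d^*=\lres{(.)}{1_Q}$ and $r^*=\rres{1_Q}{(.)}$, dualize the effective-equivalence-relation condition through the $\Loc$--$\Frm$ duality, and recognize the resulting cokernel-pair statement as condition~\eqref{thm:effective3} of Theorem~\ref{thm:effective}. The paper's version is terser and splits the two directions, but the content is the same.
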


\begin{proof}
Suppose that $Q$ is a principal quantale. By Theorem~\ref{thm:effective}\eqref{thm:effective3}, the triple $(Q,d^*,r^*)$ is the co-kernel pair of the frame inclusion $i: E\to A$ where
\[
E=\{ b\in A\mid d^*(b)=r^*(b)  \}\;.
\]
Therefore by Theorem~\ref{opengroupoid} the triple $(G_1,d,r)$ is the kernel pair of the co-equalizer of $d$ and $r$,
\[
\xymatrix{
G_1 \ar@<0.5ex>[rr]^{d}\ar@<-0.5ex>[rr]_{r} && G_0\ar@{->>}[r]^-{q}   & G_0/G\;,
}
\]
so $G$ is an effective equivalence relation. The converse follows immediately from Theorem~\ref{bijection}. \qed
\end{proof}

\begin{remark}
This provides a justification for the name \emph{principal quantales} because effective equivalence relations correspond closely to principal 
bundles on groupoids (see, \eg, \cites{QuijanoPhD,KoMoer}). In fact the terminology \emph{principal groupoids} is even used for not necessarily effective equivalence relations --- see, \eg\ \cite{Renault}.
\end{remark}

The notion of principal groupoid quantale also yields, in a simplified situation, the first quantale description of an \'etale-complete groupoid (\cf\ \cites{Moer88,Moer90,Bunge}), as the next result shows:

\begin{corollary}\label{cor: etalecompleteness}
Let $Q$ be a principal groupoid quantale and let $G=\mathcal{G}(Q)$ be its open groupoid. Then the topos $BG$ is localic, and $G$ is \'etale-complete.
\end{corollary}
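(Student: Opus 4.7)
The plan is to apply Theorem~\ref{principal, theo: effectiveeqr} and then invoke standard descent theory for localic toposes. Since $Q$ is a principal groupoid quantale, Theorem~\ref{principal, theo: effectiveeqr} immediately yields that $G=\mathcal{G}(Q)$ is an effective equivalence relation, so $G_1\cong G_0\times_{G_0/G}G_0$ with both legs being the open surjection $q:G_0\twoheadrightarrow G_0/G$.

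From the Joyal--Tierney descent theorem, the open surjection $q$ presents $\mathrm{Sh}(G_0/G)$ as the topos of descent data along $q$, and this descent topos is known to coincide with the classifying topos $BG$ of the kernel-pair groupoid associated to $q$. Hence $BG\simeq\mathrm{Sh}(G_0/G)$, which is localic by construction; this gives the first conclusion.

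For étale-completeness I would then use the characterisation from \cites{Moer88,Moer90,Bunge} that an open localic groupoid $G$ is \'etale-complete if and only if the canonical comparison map $G_1\to G_0\times_{BG}G_0$ is an isomorphism. Under the identification $BG\simeq\mathrm{Sh}(G_0/G)$, maps from a locale into $BG$ correspond bijectively to maps into the locale $G_0/G$, so that
\[
G_0\times_{BG}G_0\cong G_0\times_{G_0/G}G_0=G_1\,,
\]
and the comparison map unwinds precisely to this identification, which is an isomorphism by Theorem~\ref{principal, theo: effectiveeqr}.

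The main obstacle is bookkeeping: one has to marry the descent-theoretic content of Joyal--Tierney to the definition of étale-completeness as it appears in the cited references, and verify that the canonical comparison map truly unwinds to the pullback identification displayed above. Once that dictionary is in place, the argument is essentially immediate from Theorem~\ref{principal, theo: effectiveeqr}, requiring no further use of the quantale structure of $Q$.
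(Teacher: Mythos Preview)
Your proposal is correct and follows the same reduction as the paper: both apply Theorem~\ref{principal, theo: effectiveeqr} to obtain that $G$ is an effective equivalence relation, after which the conclusion is a known fact about such groupoids. The only difference is presentational: the paper invokes \cite{KoMoer}*{Prop.\ 3.3} as a black box, whereas you unpack its content via Joyal--Tierney descent and the comparison-map characterisation of \'etale-completeness. One small point worth tightening in your write-up is the assertion that $q:G_0\twoheadrightarrow G_0/G$ is an open surjection, which you use to invoke descent; this is standard for open equivalence relations (openness of $d$ forces openness of the quotient, and a coequalizer in $\Loc$ is a localic surjection), but it deserves a sentence since the rest of the argument hinges on it.
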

\begin{proof}
Suppose that $Q$ is a principal groupoid quantale. Then, by Theorem~\ref{principal, theo: effectiveeqr}, $G$ is an effective equivalence relation and therefore satisfies all the asumptions of \cite{KoMoer}*{Prop.\ 3.3}, which implies that the topos $BG$ is localic and $G$ is \'etale-complete. \qed
\end{proof}

\begin{bibdiv}

\begin{biblist}

\bib{Bunge}{article}{
  author={Bunge, Marta},
  title={An application of descent to a classification theorem for toposes},
  journal={Math. Proc. Cambridge Philos. Soc.},
  volume={107},
  date={1990},
  number={1},
  pages={59--79},
  issn={0305-0041},
  review={\MR {1021873 (90k:18002)}},
  doi={10.1017/S0305004100068365},
}

\bib{KoMoer}{article}{
  author={Kock, Anders},
  author={Moerdijk, Ieke},
  title={Presentations of \'etendues},
  language={English, with French summary},
  journal={Cahiers Topologie G\'eom. Diff\'erentielle Cat\'eg.},
  volume={32},
  date={1991},
  number={2},
  pages={145--164},
  issn={0008-0004},
  review={\MR {1142688 (92m:18007)}},
}

\bib{Kumjian84}{article}{
  author={Kumjian, Alexander},
  title={On localizations and simple $C^{\ast } $-algebras},
  journal={Pacific J. Math.},
  volume={112},
  date={1984},
  number={1},
  pages={141--192},
  issn={0030-8730},
  review={\MR {739145}},
}

\bib{KL}{article}{
  author={Kudryavtseva, Ganna},
  author={Lawson, Mark V.},
  title={A perspective on non-commutative frame theory},
  journal={Adv. Math.},
  volume={311},
  date={2017},
  pages={378--468},
  issn={0001-8708},
  review={\MR {3628219}},
  doi={10.1016/j.aim.2017.02.028},
}

\bib{LL}{article}{
  author={Lawson, Mark V.},
  author={Lenz, Daniel H.},
  title={Pseudogroups and their \'etale groupoids},
  journal={Adv. Math.},
  volume={244},
  date={2013},
  pages={117--170},
  issn={0001-8708},
  review={\MR {3077869}},
  doi={10.1016/j.aim.2013.04.022},
}

\bib{MarcR}{article}{
  author={Marcelino, S{\'e}rgio},
  author={Resende, Pedro},
  title={An algebraic generalization of Kripke structures},
  journal={Math. Proc. Cambridge Philos. Soc.},
  volume={145},
  date={2008},
  number={3},
  pages={549--577},
  issn={0305-0041},
  review={\MR {2464775}},
}

\bib{MaRe10}{article}{
  author={Matsnev, Dmitry},
  author={Resende, Pedro},
  title={\'Etale groupoids as germ groupoids and their base extensions},
  journal={Proc. Edinb. Math. Soc. (2)},
  volume={53},
  date={2010},
  number={3},
  pages={765--785},
  issn={0013-0915},
  review={\MR {2720249}},
  doi={10.1017/S001309150800076X},
}

\bib{Moer88}{article}{
  author={Moerdijk, Ieke},
  title={The classifying topos of a continuous groupoid. I},
  journal={Trans. Amer. Math. Soc.},
  volume={310},
  date={1988},
  number={2},
  pages={629--668},
  issn={0002-9947},
  review={\MR {973173 (90a:18005)}},
}

\bib{Moer90}{article}{
  author={Moerdijk, Ieke},
  title={The classifying topos of a continuous groupoid. II},
  language={English, with French summary},
  journal={Cahiers Topologie G\'eom. Diff\'erentielle Cat\'eg.},
  volume={31},
  date={1990},
  number={2},
  pages={137--168},
  issn={0008-0004},
  review={\MR {1080241 (92c:18003)}},
}

\bib{MoerMrcunBook}{book}{
  author={Moerdijk, I.},
  author={Mr\v cun, J.},
  title={Introduction to foliations and Lie groupoids},
  series={Cambridge Studies in Advanced Mathematics},
  volume={91},
  publisher={Cambridge University Press, Cambridge},
  date={2003},
  pages={x+173},
  isbn={0-521-83197-0},
  review={\MR {2012261}},
  doi={10.1017/CBO9780511615450},
}

\bib{MP1}{article}{
  author={Mulvey, Christopher J.},
  author={Pelletier, Joan Wick},
  title={On the quantisation of points},
  journal={J. Pure Appl. Algebra},
  volume={159},
  date={2001},
  number={2-3},
  pages={231--295},
  issn={0022-4049},
  review={\MR {1828940 (2002g:46126)}},
}

\bib{Paterson}{book}{
  author={Paterson, Alan L. T.},
  title={Groupoids, inverse semigroups, and their operator algebras},
  series={Progress in Mathematics},
  volume={170},
  publisher={Birkh\"auser Boston Inc.},
  place={Boston, MA},
  date={1999},
  pages={xvi+274},
  isbn={0-8176-4051-7},
  review={\MR {1724106 (2001a:22003)}},
}

\bib{PR12}{article}{
  author={Protin, M. Clarence},
  author={Resende, Pedro},
  title={Quantales of open groupoids},
  journal={J. Noncommut. Geom.},
  volume={6},
  date={2012},
  number={2},
  pages={199--247},
  issn={1661-6952},
  review={\MR {2914865}},
  doi={10.4171/JNCG/90},
}

\bib{QuijanoPhD}{thesis}{
  author={Quijano, Juan Pablo},
  title={Sheaves and functoriality of groupoid quantales},
  institution={Univ.\ Lisboa},
  year={2018},
  type={Doctoral Thesis},
}

\bib{RenaultLNMath}{book}{
  author={Renault, Jean},
  title={A groupoid approach to $C^{\ast } $-algebras},
  series={Lecture Notes in Mathematics},
  volume={793},
  publisher={Springer},
  place={Berlin},
  date={1980},
  pages={ii+160},
  isbn={3-540-09977-8},
  review={\MR {584266 (82h:46075)}},
}

\bib{Renault}{article}{
  author={Renault, Jean},
  title={Cartan subalgebras in $C^*$-algebras},
  journal={Irish Math. Soc. Bull.},
  number={61},
  date={2008},
  pages={29--63},
  issn={0791-5578},
  review={\MR {2460017 (2009k:46135)}},
}

\bib{Re07}{article}{
  author={Resende, Pedro},
  title={\'Etale groupoids and their quantales},
  journal={Adv. Math.},
  volume={208},
  date={2007},
  number={1},
  pages={147--209},
  issn={0001-8708},
  review={\MR {2304314 (2008c:22002)}},
}

\bib{GSQS}{article}{
  author={Resende, Pedro},
  title={Groupoid sheaves as quantale sheaves},
  journal={J. Pure Appl. Algebra},
  volume={216},
  date={2012},
  number={1},
  pages={41--70},
  issn={0022-4049},
  review={\MR {2826418}},
  doi={10.1016/j.jpaa.2011.05.002},
}

\end{biblist}

\end{bibdiv}

\end{document}